\providecommand\@enum@widestlabel{7}
\newtheorem{lemma}{Lemma}[section]
\newtheorem{theorem}[lemma]{Theorem}
\newtheorem{proposition}[lemma]{Proposition}
\theoremstyle{definition}
\newtheorem{remark}[lemma]{Remark}
\newtheorem*{remark*}{Remark}
\renewcommand{\theequation}%
{\arabic{section}.\arabic{lemma}.\arabic{equation}}
\newcommand{\PP}{\ensuremath{\mathbb{P}}}
\newcommand{\sI}{\ensuremath{\kern -1pt \mathscr{I}\kern -2pt}} 
\newcommand{\sJ}{\ensuremath{\kern -2pt \mathscr{J}\kern -2pt}}
\newcommand{\bb}{\ensuremath{\mathfrak{b}}}
\renewcommand{\geq}{\geqslant}
\renewcommand{\leq}{\leqslant}
\DeclareMathOperator{\mult}{mult}
\DeclareMathOperator{\Nef}{Nef}
\DeclareMathOperator{\Eff}{\overline{Eff}}
\DeclareMathOperator{\Mov}{Mov}
\DeclareMathOperator{\vol}{vol}
\newcommand{\deq}{\ensuremath{\stackrel{\textrm{def}}{=}}}
\newcommand{\nob}[2]{\ensuremath{\Delta_{#1}(#2)}}
\newcommand{\inob}[2]{\ensuremath{{\Delta}_{#1}(#2)}}
\definecolor{shadecolor}{gray}{0.875}
\let\cal\mathcal
\let\frak\mathfrak
\let\bb\mathbb
\begin{document}

\title{Fujita--Zariski decompositions on some product threefolds}

\author{Mihai Fulger}
\address{University of Connecticut, Department of Mathematics, Storrs CT 06269, USA}
\address{Institute of Mathematics of the Romanian Academy, Bucharest 010702, Romania}
\email{mihai.fulger@uconn.edu}

\author{Victor Lozovanu}
\address{Dipartimento di Matematica, Universit\`a Degli Studi Di Genova, Genova 16146, Italy}
\email{victor.lozovanu@unige.it}

\begin{abstract}
	We use explicit blow-ups and computations of birational Fujita--Zariski decompositions to determine generic infinitesimal Newton--Okounkov bodies for box-product ample polarizations on three classes of spaces: product between a curve and the projective plane, products of three curves, and the product between a curve and a Jacobian surface. In particular we compute the volume of many big but non-nef divisors on blow-ups of these threefolds.
\end{abstract}

\maketitle

\section{Introduction}

\subsection{The Riemann--Roch problem}
Given a smooth projective variety $X$ of dimension $n$ over an algebraically closed field, and a line bundle $L$ on it, the Riemann--Roch problem asks to compute the Hilbert function $m\mapsto \dim H^0(X,mL)$. This is often difficult, and instead we settle on measuring the asymptotic growth of the function, more precisely on computing the \emph{volume}
\[{\rm vol}(L)\deq\limsup_{m\to\infty}\frac{\dim H^0(X,mL)}{m^n/n!}\ .\]
This is a numerical birational invariant of $L$, well-studied in \cite{Laz04}. When $L$ satisfies strong positivity conditions like being ample, or at least \emph{nef} (i.e., asymptotically ample), then $\vol(L)=(L^n)$ is the top self-intersection. The volume is positive precisely when $L$ is \emph{big} (i.e., birationally ample), however there are plenty of line bundles that are big and not nef and then the computation of their volume is unclear.

For surfaces, the problem above is solved by Zariski decompositions. We can decompose $L=P+N$ as divisors, where $P$ is nef and $N$ effective and $\vol(L)=\vol(P)=(P^2)$. Computing the Zariski decomposition is effective when an effective divisor that represents $L$ is known.

In higher dimension, Zariski decompositions can be tricky. In general we decompose $L=P_{\sigma}(L)+N_{\sigma}(L)$, the so called \emph{Nakayama $\sigma$-Zariski decomposition} of $L$ (see \cite{Nak04}). We still have $\vol(L)=\vol(P_{\sigma}(L))$, however now $P_{\sigma}(L)$ is only \emph{movable} (i.e., asymptotically it deforms without fixed divisorial components), not necessarily nef, and so the intersection theoretic interpretation for the volume is missing.

Sometimes (but not always), for example on toric varieties, we can find an explicit birational model $\rho:\widetilde X\to X$, after possibly many blow-ups, such that the positive part of $\rho^*L$ is nef. We call this a \emph{birational Fujita--Zariski} decomposition. It is also an example of a Cutkosky--Kawamata--Moriwaki decomposition (see \cite{Pro03,KM13}). When this happens, then $\vol(L)=(P_{\sigma}(\rho^*L)^n)$.

\subsection{Generic infinitesimal Newton--Okounkov bodies}
An even more ambitious problem than finding the volume is the computation of various Newton--Okounkov bodies (\emph{NObodies}) associated to big line bundles $L$. We focus here on \emph{infinitesimal} NObodies.  If $x\in X$ and $\pi:{\rm Bl}_xX\to X$ is the blow-up with exceptional divisor $E\deq \pi^{-1}\{x\}\simeq\bb P^{n-1}$, then from a complete linear flag $Y_{\bullet}:\ \bb P^{n-1}\simeq E=Y_1\supset Y_2\supset\ldots\supset Y_n=\{y\}$ one can associate $\nob{Y_{\bullet}}{\pi^*L}\subset\bb R^n_+$, the \emph{infinitesimal Newton--Okounkov body (iNObody)} of $L$ at $x$ associated to $Y_{\bullet}$. It is a valuative asymptotic construction from sections of all $mL$. We detail it in \S\ref{ss:NObodies}. 
The simplest shapes are obtained when $Y_{\bullet}$ is a very general linear flag, and the resulting body is independent of this very general choice (cf. \cite{LM09}). We call it the \emph{generic iNObody} and denote it $\nob{x}{L}$.

A feature of the valuative construction is that ${\rm vol}_{\bb R^n}\nob{x}{L}=\frac 1{n!}\vol(L)$, thus computing $\nob{x}{L}$ is indeed a more general problem. In fact, if we consider coordinates $(\nu_1,\ldots,\nu_n)$ on $\bb R^n$, then $\vol_{\bb R^n}\inob{x}{L}_{\nu_1\geq t}=\frac 1{n!}\vol(\pi^*L-tE)$. Thus $\inob{x}{L}$ captures the volume of the entire ray $t\geq 0$ of divisors
\[L_t\deq \pi^*L-tE\]
on ${\rm Bl}_xX$. Computing the body is already an interesting problem for ample line bundles $L$ since the $L_t$ are not necessarily nef.

On surfaces, we explain in Proposition \ref{prop:NOsurface} via work of \cite{LM09} that Zariski decompositions are again key for computing generic iNObodies. If $L$ is big and $x\not\in{\bf B}_-(L)$, e.g., when $L$ is also nef, then 
\[\inob{x}{L}\ =\ \text{region below the graph of }t\mapsto P_{\sigma}(L_t)\cdot E\ .\]
Thus, in this case, the Zariski decompositions for all divisors $L_t$ on ${\rm Bl}_xX$ determine the generic iNObody.

Before work of the authors in \cite{FLgeneralities,FLproductcurves,FLjac}, essentially the only example in the literature of a generic iNObody computation on a variety of dimension $n\geq 3$ was on $\bb P^n$, where the result is simplicial. A difficulty is the genericity condition on the linear flag $Y_{\bullet}$ which makes the computation unclear even when $X$ is a smooth toric variety and $x$ a torus-fixed point. We explain in \cite{FLgeneralities} that if $X=\bb P^1\times\bb P^1$, if $L=\cal O(1,1)$, and $x$ is a torus-fixed point (in fact any point), then the generic $\inob{x}{L}$ differs from the body computed with respect to the linear flags $E=Y_1\supset Y_2$ for both choices of points $Y_2\in E$ that are fixed by the torus in ${\rm Bl}_xX$.

In this paper, as in \cite{FLjac}, our idea is that the strategy on surfaces should extend to threefolds when we can find a sequence of blow-ups $\rho:\widetilde X\to {\rm Bl}_xX$ such that $P_{\sigma}(\rho^*L_t)$ is nef for all $t$, i.e., a simultaneous birational Fujita--Zariski decompositions for the entire ray $L_t=\pi^*L-tE$ of divisors on ${\rm Bl}_xX$. When this happens, we prove that the vertical slice $\nob{x}{L}_{\nu_1=t}$ is a NObody for the restriction of $P_{\sigma}(\rho^*L_t)$ to the strict transform $\widetilde E$ of $E$ in $\widetilde X$.  The surface $\widetilde E$ is a sequence of point blow-ups of $\bb P^2$, and computations are very concrete when the nef restricted divisor $P_{\sigma}(\rho^*L_t)|_{\widetilde E}$ is explicitly presented as an effective divisor. We are able to see this idea through in three classes of examples, adding to the Jacobian threefold examples in the upcoming \cite{FLjac}. 

\begin{theorem}
	Assume we are in one of the following cases:
	\begin{enumerate}
		\item $X=C\times\bb P^2$ where $C$ is a smooth projective curve, and $L=L_1\boxtimes\cal O_{\bb P^2}(b)$ is a box-product of ample line bundles.
		\smallskip
		
		\item $X=C_1\times C_2\times C_3$ where $C_i$ are smooth projective curves and $L$ is a box-product $L=L_1\boxtimes L_2\boxtimes L_3$ where $L_i$ are line bundles on $C_i$.
		\smallskip
		
		\item $X=C\times {\rm Jac}(D)$ where $C$ is any smooth projective curve, while $D$ is a smooth projective curve of genus 2, and $L$ is again a box-product $L_1\boxtimes \cal O(a\theta)$, where $\theta$ is the Theta divisor on the Jacobian.
	\end{enumerate}
Then, for every $x\in X$, explicit birational Fujita--Zariski decompositions exist for the entire ray of divisors $L_t=\pi^*L-E$ on ${\rm Bl}_xX$ and in particular we compute the function $t\mapsto{\rm vol}(L_t)$. Furthermore we describe the convex body $\inob{x}{L}$. 
\end{theorem}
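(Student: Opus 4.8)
The plan is to run a single three-step argument in each of the three cases: (i) construct an explicit iterated blow-up $\rho\colon\widetilde X\to\Bl_x X$; (ii) on $\widetilde X$ write down, for every $t\ge 0$, the Nakayama decomposition $\rho^*L_t=P_t+N_t$ and verify by hand that $P_t$ is nef; (iii) read off $\vol(L_t)=(P_t^3)$ and, via the slicing result recalled in the introduction, compute $\nob{x}{L}$ from surface Zariski decompositions on the strict transform $\widetilde E$ of $E$. First I would dispose of the quantifier ``for every $x$'': in all three cases the construction below depends only on the configuration and intersection numbers of the distinguished subvarieties through $x$ cut out by the product structure, and these are independent of $x$ --- the surface $\mathbb P^2$, respectively the abelian surface $\mathrm{Jac}(D)$, being homogeneous, and the intersection theory of a curve being insensitive to the chosen point --- so it suffices to treat one point.

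The heart of the matter is (i)--(ii). After blowing up $x$, with exceptional $E\cong\mathbb P^2$ and $L_t=\pi^*L-tE$, this class is nef for $0\le t\le\epsilon$ because $L$ is ample, but fails to be nef for larger $t$ along strict transforms of the distinguished curves through $x$: lines in the fibre $\{p\}\times\mathbb P^2$ and the section $C\times\{q\}$ in case (1); the three coordinate curves $C_i\times\{p_j\}\times\{p_k\}$ and the fibres of the coordinate surfaces in case (2); translates of the Abel--Jacobi curve $D\subset\mathrm{Jac}(D)$ through the point and the section $C\times\{q\}$ in case (3). I would blow these up, together with the finitely many infinitely near centres that the base locus forces, obtaining $\rho\colon\widetilde X\to\Bl_x X$ with exceptional primes $\widetilde E,E_1,\dots,E_r$, where $\widetilde E$ is an iterated point blow-up of $\mathbb P^2$. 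Then for each $t$ I would propose $N_t$ as an explicit effective combination of $\widetilde E$, the $E_i$ and the strict transforms of the distinguished divisors, with coefficients piecewise linear in $t$; solve for these coefficients from orthogonality of $P_t=\rho^*L_t-N_t$ to the appropriate curves; check that $P_t$ is nef by pairing it against a finite generating set of the relevant part of $\overline{NE}(\widetilde X)$ (the distinguished curves and their transforms --- polyhedral in the directions that matter); and confirm $N_t=N_\sigma(\rho^*L_t)$ by the negativity characterization of the $\sigma$-decomposition. This also locates the finitely many walls in $t$ at which the shape of $N_t$ jumps.

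Granting (i)--(ii), step (iii) is comparatively mechanical. The volume is $\vol(L_t)=(P_t^3)$, a chamberwise-polynomial function of $t$ obtained from blow-up formulae together with the intersection numbers of $X$ and of $E$. For the body, the slicing result gives that the vertical slice $\nob{x}{L}_{\nu_1=t}$ is a Newton--Okounkov body of the nef divisor $P_t|_{\widetilde E}$ on the surface $\widetilde E$ for the flag induced by $Y_\bullet$; since each $P_t|_{\widetilde E}$ comes out as an explicit effective divisor on this blow-up of $\mathbb P^2$, Proposition~\ref{prop:NOsurface} presents each slice as the region below a piecewise-linear graph computed by surface Zariski decompositions on $\widetilde E$, and one assembles the slices over $t\ge 0$ into $\nob{x}{L}\subset\mathbb R^3$.

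The hard part will be (i)--(ii): choosing the blow-up centres so that the positive parts are simultaneously nef along the entire ray $\{L_t\}$, and then certifying nefness uniformly in $t$ --- equivalently, controlling how the Zariski chamber decomposition of $\mathrm{Pic}(\widetilde X)_{\mathbb R}$ meets the subspace spanned by $\rho^*\pi^*L$, $\widetilde E$ and the $E_i$. I expect case (3) to resist most: there $\theta$ is a smooth genus-$2$ curve and the unique effective divisor in its numerical class, the local positivity at the point is governed by the translates of $\theta$ through it, and one must pin down $\Bminus$ and the Seshadri-type thresholds on $\Bl_x\mathrm{Jac}(D)$ before the threefold picture can be assembled. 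The product-of-curves case (2) has the most bookkeeping because of the six distinguished subvarieties and their many mutual intersections, but no single step there looks genuinely obstructive.
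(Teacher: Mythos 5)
Your overall plan --- first blow up $x$, then blow up a carefully chosen collection of curves to produce a simultaneous birational Fujita--Zariski decomposition for the whole ray $\{L_t\}$, then compute slices of $\nob{x}{L}$ on $\widetilde E$ via the surface Zariski decomposition result --- is exactly the architecture of the paper's proof (it is formalized as Proposition~\ref{prop:iNOthreefold} and then run in Theorems~\ref{thm:CP}, \ref{thm:CCCfinalform}, \ref{thm:JxCfinalform}). Your remarks on homogeneity disposing of the quantifier over $x$ and on the mechanics of steps~(ii) and (iii) are also on the mark. However, the place where you flagged the difficulty is precisely where your proposal goes wrong, and not just in ``bookkeeping.''

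In case~(3) you propose to center the further blow-ups on ``translates of the Abel--Jacobi curve $D\subset\mathrm{Jac}(D)$ through the point'' (together with the section $C\times\{q\}$). That is the wrong curve, and the error would propagate through the entire argument. A translate of $D$ through the origin is a curve of class $\theta$ with $\mathrm{mult}_o=1$, which would only give the Seshadri bound $(\theta^2)/1=2$; but on a genus-$2$ Jacobian the Seshadri constant of $\theta$ is $4/3$, achieved by a genuinely different curve, namely $R=\mu(D)$, the image of $D\subset J_1$ under the multiplication-by-$2$ map $\mu\colon J_1\to J_2$. This $R$ has class $4\theta$ and multiplicity $6$ at $o$ (the six branches corresponding to the six Weierstrass points), so $(R\cdot\theta)/\mathrm{mult}_oR=8/6=4/3$. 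It is $\overline R_c=\overline{\{c\}\times R}$, not a translate of $D$, that obstructs nefness of $L_t$, governs $\Mov(\overline X)$ and $\Nef(\overline X)$ in Proposition~\ref{prop:firstCxJ}, and must be blown up --- indeed iterated three times, because the strict transform of $R_c$ (and then of its exceptional intersection curve) continues to have unstable normal bundle until the process terminates. Without identifying $R$ your step~(ii) cannot be carried out: the negative part $N_t$ you would ``solve for'' from orthogonality against $\theta$-translates would not be $N_\sigma(\rho^*L_t)$, since $\sigma_{\overline R_c}(\cdot)$ would be missed entirely.

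A smaller imprecision: in case~(1) you list ``lines in the fibre $\{p\}\times\mathbb P^2$'' among the centers. Those curves ($\overline{\mathbb P}^1_x$ in the paper's notation) do \emph{not} need to be blown up: $P_\sigma(L_t)$ already pairs nonnegatively against them. Only the section $\overline C_x$ (the strict transform of $C\times\{p\}$) obstructs nefness, and a single blow-up along it suffices (Proposition~\ref{prop:CxP2fujita}). Blowing up the fiber lines would not be wrong per se, but it is not what the base locus forces, and it suggests the mechanism by which nefness fails was not quite pinned down. Case~(2) as you describe it is essentially right (the three curves $\overline f_{ij}$), aside from the dangling phrase ``fibres of the coordinate surfaces.''
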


Theorems \ref{thm:CP}, \ref{thm:CCCfinalform} and \ref{thm:JxCfinalform} have our precise results and a few illustrations of resulting generic iNObodies. We hope that the computations will serve as testing ground for various open problems. For instance, in Theorem \ref{thm:Seshdaricurveconjecture} we see that our examples give counterexamples to a question raised by \cite{FLgeneralities}.

The first two examples have a very toric feel. The genera of $C_i$ do not affect our computations. In the first case, to find birational Fujita--Zariski decompositions we further blow-up ${\rm Bl}_xX$ along a copy of $C$, the strict transform of the fiber of the second projection through $x$. In the second case, we further blow-up ${\rm Bl}_xX$ along the strict transforms of the three ``coordinate curves'' $C_i$ through $x$. The surface $\widetilde E$ on which we compute the vertical slices $\nob{x}{L}_{\nu_1=t}$ is the blow-up of $\bb P^2$ in one point in the first case, and in $3$ non-collinear points in the second case. We get help from the geometry of the Cremona transform, and from the toric structure.

The third example is more complicated. We compute Nakayama $\sigma$-Zariski decompositions on ${\rm Bl}_xX$ by first computing the movable cone of divisors on this space. As in \cite{FLjac}, the blow-ups are then guided by the idea that we want to find explicit effective representatives for the particular $L_{\nu}=\pi^*L-\nu\cdot E$ on the boundary of the movable cone, and then blow-up until their strict transforms become nef. This process then ends up computing birational Fujita--Zariski decompositions not just for $L_{\nu}$, but for all $L_t$.
The blow-ups are more involved and numerous. There are two important curves through $x$ on $X$: a copy of $C$ appearing as a fiber of the projection to ${\rm Jac}(D)$, and a curve $R\times \{c_0\}$ where $R\subset{\rm Jac}(C)$ is the Seshadri curve, the image of an Abel--Jacobi embedding $C\subset{\rm Jac}(C)$ under the multiplication by two map on the Jacobian. We blow-up the strict transform of $C$ once and then the strict transform of $R\times\{c_0\}$ three times in a natural sense. In general, blow-ups seem to terminate when the curves that we blow-up all have semistable normal bundles. The rational surface $\widetilde E$ on which we compute slices of the iNObody is the surface in Lemma \ref{lem:P2blow7}, which is $\bb P^2$ blown-up in 7 points, 6 of which are on a line, then two more blow-ups, each time of 6 infinitesimally near points to the last 6.

\subsection{Other computations}
One can ask whether our strategy extends in higher dimension. 
The answer is yes when it comes simply to computing Fujita--Zariski decompositions. For example they should be easy to do on any concrete toric example. However, when it comes to using them to compute generic iNObodies, already in dimension 4 a new level of complexity is introduced. For the computation of the slices $\inob{x}{L}_{\nu_1=t}$, we need some NObodies for $(P_{\sigma}(\rho^*L_t)+s\rho^*E)|_{\widetilde E}$
on the threefold $\widetilde E$. A new parameter $s$ appears and new blow-ups of $\widetilde E$ may be needed (and it is not clear that only finitely many exist) in order to run this computation. Clearly higher dimensions add to the complexity of the problem.

Setting aside birational Fujita--Zariski decompositions, sometimes effective computations of generic iNObodies can be carried out in higher dimension via a guess-and-check method. We do this in \cite{FLproductcurves} for balanced box-products $\boxtimes_{i=1}^nL_i$ on products of arbitrarily many curves, but we ask for example $\deg L_i=1$ for all $i$. The result is a simplex. The computation again benefits from toric geometry.

In \cite{FLgeneralities} we initiate a deep investigation into the shape of generic iNObodies. We prove that they satisfy a certain property that we call Borel-fixed, at least when $x$ is very general in $X$. It implies sharp upper and lower-bounds. We extract a characterization for when the generic iNObody is simplicial in \cite{FLgeneralities}. As application we recovered the case of balanced box-products on products of curves from \cite{FLproductcurves}, the case of non-hyperelliptic Jacobian threefolds from \cite{FLjac}, but also other examples such as quadrics in projective space, or symmetric products of curves with polarization descended from a balanced box-product.

\subsection*{Acknowledgments} The first named author was partially supported by the Simons travel grant no.~579353.  The second named author was partially supported by the Research Project PRIN 2020 - CuRVI, CUP J37G21000000001, and PRIN 2022 - PEKBY, CUP J53D23003840006. Furthermore, he wishes to thank the MIUR Excellence Department of Mathematics, University of Genoa, CUP D33C23001110001. He is also a member of the INDAM-GNSAGA. 

The polytope pictures were created in \cite{Mathematica}. Figure \ref{fig:P27} was created in GoodNotes, while for Figure \ref{fig:CxJ} we also used Notability. We used the \cite{Polymake} software (see also \cite{GJ97}) to convert between polytopes given as convex hulls and polytopes described by inequalities.

\section{Notation, comments, preliminaries}

\subsection{Positive part of a real number}\label{subsec:r+}
For a real number $r$, denote
\[r_+\deq\max\{0,r\}.\]

\subsection{Positivity notions for divisors}
We work with classes in $N^1(X)_{\bb R}$. Our positive classes usually form convex cones and we transfer said positivity property to and from the cone.

The closure of the ample cone ${\rm Amp}(X)$ is the \emph{nef} cone $\Nef(X)$. The closure of the cone of effective divisors is the \emph{pseudoeffective} cone $\Eff(X)$. Its interior is the \emph{big} cone. The closure of the cone generated by Cartier divisors with complete linear series without fixed divisorial components is the \emph{movable} cone $\Mov(X)$. We have inclusions $\Nef(X)\subset\Mov(X)\subset\Eff(X)$.

Nefness is preserved by pullback and by finite flat pushforward, movability is preserved by generically finite dominant pushforward, while pseudoeffectivity is preserved by dominant pullback and generically finite dominant pushforward.

\subsection{Positivity on point blow-ups}
Let $\pi:\overline{X}\rightarrow X$ be the blow-up of a smooth projective variety of dimension $n$ at a point $x$ with $E\simeq \PP^{n-1}$ the exceptional divisors. Let $L$ be an ample line bundle on $X$. Put 
\[
L_t \ \deq \ \pi^*L-tE, \textup{ for any } t\geq 0 \ .
\]
We have the following local positivity invariants: 
\begin{itemize}
	\item $\epsilon\deq \epsilon(L;x)=\max\{t\geq 0\ \mid\ L_t\text{ is nef}\}$ is the \textit{Seshadri constant}; 
	\item $\nu\deq\nu(L;x)=\max\{t\geq 0\ \mid\ L_t\text{ is movable}\}$; 
	\item $\mu\deq \mu(L;x)=\max\{t\geq 0\ \mid\ L_t\text{ is psuedoeffective}\}$ is the \emph{infinitesimal width}, or \emph{Fujita--Nakayama} invariant. 
\end{itemize} If $L$ is ample, then the class $L_t$ is ample for $t\in (0,\epsilon)$, movable but not nef in $(\epsilon,\nu]$, and big but not movable for $t\in (\nu, \mu)$.

\subsection{Techniques for checking positivity}

\subsubsection{A simple test for nefness}\label{subsec:neftest} Let $D=P+\sum_{i=1}^ma_iY_i$ with $P$ nef, with $a_i\geq 0$, and $Y_i$ prime divisors. Let $\pi_i:Y'_i\to X$ be resolutions of the $Y_i$, or we can simply ask that the $Y'_i$ are smooth projective and dominate $Y_i$. Then $D$ is nef iff $\pi_i^*D$ is nef for all $i$.

\subsubsection{A movability criterion}\label{subsec:movtest}
If $D=P+N=P'+N'$ with $P,P'$ movable and $N,N'$ effective such that the supports of $N,N'$ do not share divisorial components, then $D$ is movable.

\subsubsection{A necessary condition for movability}\label{subsec:movcond}
If $\pi:Y'\to X$ is a smooth alteration of a prime divisor on $X$ and $D$ is movable, then $\pi^*D$ is pseudoeffective. 

\subsection{Zariski decompositions}\label{subsec:Nakayama}

If $D$ is a big $\mathbb R$-Cartier $\mathbb R$-divisor on $X$, then there exists a unique decomposition
\[D=P_{\sigma}(D)+N_{\sigma}(D)\]
such that $P_{\sigma}(D)$ is movable, $N_{\sigma}(D)$ is effective, and $D'-N_{\sigma}(D)$ is an effective divisor for every effective $\bb R$-Cartier $\bb R$-divisor $D'\equiv D$. This is the \emph{Nakayama $\sigma$-Zariski decomposition} of $D$. When $X$ is a surface, this is just the usual Zariski decomposition.
Write 
\[N_{\sigma}(D)=\sum\nolimits_Y\sigma_Y(D)\cdotp Y,\] where the sum ranges through all prime divisors $Y$ in $X$. For given divisor $D$, only at most finitely many $\sigma_Y(D)$ are nonzero, thus the sum is well-defined.
We list some properties that we will use:
\begin{enumerate}
	\item If $D=P+N$ such that $P$ is movable and $N$ is effective, then $N-N_{\sigma}(D)$ is an effective divisor. 
	\smallskip
	
	\item If $\pi:\overline Y\to X$ is a resolution of a prime divisor $Y$ on $X$ and $u=\inf\{t\geq0\ \mid \pi^*(D-tY)\text{ is pseudoeffective}\}$, then $\sigma_Y(D)\geq u_+$. 
	\smallskip
	
	\item If $\pi:Y\to X$ is birational, then $P_{\sigma}(\pi^*D)=P_{\sigma}(\pi^*P_{\sigma}(D))$ and their cycle pushforward in $X$ is $P_{\sigma}(D)$. The divisor  $\pi^*P_{\sigma}(D)-P_{\sigma}(\pi^*D)=N_{\sigma}(\pi^*P_{\sigma}(D))$ 
	is effective and $\pi$-exceptional. It can be nonzero.
	\smallskip
	
	\item ${\rm Supp}\, N_{\sigma}(D)$ is the divisorial part of ${\bf B}_-(D)$. See \cite{ELMNP06} for the definition and properties of ${\bf B}_{\pm}(D)$.
\end{enumerate}

\subsubsection{Birational Fujita--Zariski decomposition}
In dimension $n\geq 3$, the positive part $P_{\sigma}(D)$ is in general not nef. If there exists $\rho:\widetilde X\to X$ birational with $\widetilde X$ smooth such that $P_{\sigma}(\rho^*D)$ is nef, we say that the big divisor $D$ admits a \emph{birational Fujita--Zariski decomposition}. This is also a particular case of a birational \emph{Cutkosky--Kawamata--Moriwaki decomposition}. See \cite{Pro03,KM13} for extensions of the theory to pseudoeffective divisors.

\subsection{Newton--Okounkov bodies}\label{ss:NObodies}

Fix a flag 
\[X=Y_0\supset Y_1\supset\ldots\supset Y_n=\{x\}\] 
such that each $Y_i$ is smooth irreducible of dimension $n-i$, or at least smooth at $x$. If $D$ is any nonzero  divisor on $X$, put
\[\nu_1(D)\ \deq\ \textup{ord}_{Y_1}(D).\]
Then $D-\nu_1(D)Y_1$ is a divisor that does not contain $Y_1$ in its support. It has a well-defined restriction $D-\nu_1(D)Y_1|_{Y_1}$ on $Y_1$. Put
\[\nu_2(D)\ \deq\ \textup{ord}_{Y_2}(D-\nu_1(D)Y_1|_{Y_1}).\]
Continue inductively to define
\[\nu\ \deq\ (\nu_1,\ldots,\nu_n).\]
Note that if $D$ is effective, then $\nu(D)\in\bb Z^n_{\geq 0}$. The function $\nu$ is valuative. In particular
$\nu(D+D')=\nu(D)+\nu(D')$
and if $V\subset H^0(X,D)$ is any linear series that we see as a set of effective Cartier divisors on $X$, then
$\#\nu(V\setminus\{0\})=\dim V.$
For $D$ a big line bundle, the \emph{Newton--Okounkov} body of $D$ is 
\[\nob{Y_{\bullet}}{D}\ \deq\ \overline{\bigcup\nolimits_{m\geq 1}\frac 1m\nu(H^0(X,mD)\setminus\{0\})}.\]
Fixing a flag $Y_{\bullet}$ as above, \cite{LM09} prove that $\nob{Y_{\bullet}}{D}$ only depends on the numerical class of $D$, and $n!\cdot {\rm vol}_{\bb R^n}\nob{Y_{\bullet}}{D}=\vol(D)$.

The \emph{generic infinitesimal Newton--Okounkov body} is
\[\inob{x}{D}\ \deq\ \nob{\overline Y_{\bullet}}{\pi^*D},\]
where $\overline Y_{\bullet}$ is a very general linear flag in $E\simeq\bb P^{n-1}$ with $\overline Y_i\simeq\bb P^{n-i}$ for $i>0$ and $\overline Y_0=\overline X$. This is well-defined by \cite{LM09}.
Specific interest in the generic infinitesimal NObody comes from the fact that it can be used to test whether $x\in{\bf B}_{\pm}(D)$, and it recovers the \emph{moving Seshadri constant} $\epsilon(||D||;x)$ when $x\not\in{\bf B}_+(D)$. These were proved in \cite{KL17}. In \cite{FLgeneralities} we prove that additionally the generic iNObody has a special shape that we call Borel-fixed, and in particular we recover sharp bounds that are simplicial from below and of a special polytopal shape from above.

\begin{proposition}[The surface case, cf. {\cite[Theorem 6.4]{LM09}} ]\label{prop:NOsurface}
	Let $X$ be a smooth projective surface and $D$ a big divisor on $X$.
	Let $Y_{\bullet}$ be the flag $X\supset C\supset\{x\}$ where $C$ is a smooth curve. Put $\mu=\sup\{t\geq 0\ \mid\ D-tC\text{ is big}\}$. Assume that $C$ is not contained in the support of $N_{\sigma}(D)$. Then
	\[\Delta_{Y_{\bullet}}(D)=\overline{\bigl\{(t,x)\in\bb R^2_{\geq 0}\ \mid\ 0\leq t<\mu,\text{ and }x\in[\alpha(t),\beta(t)]\bigr\}},\]
	where $\alpha(t)={\rm ord}_x(N_{\sigma}(D-tC)|_C)$ and $\beta_t=\alpha(t)+(P_{\sigma}(D-tC)\cdot C)$. 
	\smallskip
	
	In the infinitesimal case, assume $x\not\in{\bf B}_-(D)$. Consider a flag $\overline X\supset E\supset\{x'\}$ where $x'\not\in{\rm Supp}\, N_{\sigma}(\pi^*D-\mu(D;x) E)$. Then $\alpha(t)=0$ for all $0\leq t<\mu(D;x)$ and $\beta(t)=(P_\sigma(D_t)\cdot E)$.
\end{proposition}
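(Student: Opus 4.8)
The first assertion is \cite[Theorem 6.4]{LM09}, and the infinitesimal statement is obtained by specializing it; I recall the mechanism of the surface case since it is what the second part rests on. The fibration theorem for Okounkov bodies attached to the flag $X\supset C\supset\{x\}$ identifies, for $t$ in the range where $D-tC$ is big, the fiber $\Delta_{Y_{\bullet}}(D)\cap(\{t\}\times\RR)$ with the Okounkov body on the curve $C$, taken with respect to $x$, of the restricted graded linear series whose $m$-th piece is the image of $H^0(X,mD\otimes\sO_X(-\lfloor mt\rfloor C))$ in $H^0(C,\cdot)$. Using the Zariski decomposition $D-tC=P_{\sigma}(D-tC)+N_{\sigma}(D-tC)$ on the surface $X$ — in which $P_{\sigma}(D-tC)$ is nef and, by the hypothesis $C\not\subset\Supp N_{\sigma}(D)$ together with the monotonicity of the negative part along the ray $D-tC$, the curve $C$ is not a component of $N_{\sigma}(D-tC)$ — one has $H^0(X,mD\otimes\sO_X(-\lfloor mt\rfloor C))=H^0(X,mP_{\sigma}(D-tC))$, so multiplication by the canonical section of $mN_{\sigma}(D-tC)$ identifies the restricted series, up to bounded defect, with the complete series of $mP_{\sigma}(D-tC)$ restricted to $C$ and translated by the fixed effective divisor $N_{\sigma}(D-tC)|_C$. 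Since $P_{\sigma}(D-tC)$ is nef and big (and $C\not\subset\Bplus(P_{\sigma}(D-tC))=\Null(P_{\sigma}(D-tC))$ when $(P_{\sigma}(D-tC)\cdot C)>0$), the restricted volume along $C$ is $(P_{\sigma}(D-tC)\cdot C)$, and the Okounkov body on the curve $C$ of the restriction is the segment $[0,(P_{\sigma}(D-tC)\cdot C)]$; translating by $N_{\sigma}(D-tC)|_C$ moves the left endpoint to $\ord_x(N_{\sigma}(D-tC)|_C)=\alpha(t)$. Hence every fiber is $[\alpha(t),\beta(t)]$ (a point when the restricted volume vanishes), and closing up the union over $t\in[0,\mu)$ gives the displayed formula.

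For the infinitesimal statement I apply the first assertion with $X$ replaced by $\overline X=\Bl_xX$, with $D$ replaced by $\pi^*D$, and with $C$ replaced by $E$; here $\sup\{t\geq0:\pi^*D-tE\text{ is big}\}=\mu(D;x)$, and the hypothesis ``$C\not\subset\Supp N_{\sigma}$'' reads $E\not\subset\Supp N_{\sigma}(\pi^*D)$. This last holds because $x\notin\Bminus(D)$: then $E$ is not contained in $\Bminus(\pi^*D)$, whose divisorial part is $\Supp N_{\sigma}(\pi^*D)$ by property \textup{(4)} in \S\ref{subsec:Nakayama}. With this input the first assertion applies on $\overline X$: it propagates to $E\not\subset\Supp N_{\sigma}(\pi^*D-tE)$ for $0\leq t<\mu(D;x)$ (implicit already in the well-definedness of $\alpha$), and gives $\alpha(t)=\ord_{x'}\!\bigl(N_{\sigma}(\pi^*D-tE)|_E\bigr)$ and $\beta(t)=\alpha(t)+(P_{\sigma}(\pi^*D-tE)\cdot E)$ on that range.

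It remains to see $\alpha\equiv 0$ on $[0,\mu(D;x))$. Fix such a $t$. The Zariski decomposition of the pseudoeffective divisor $\pi^*D-\mu(D;x)E$ on the surface $\overline X$ (the usual surface extension of the big case) writes it as $P+N$ with $P$ nef, hence movable, and $N=N_{\sigma}(\pi^*D-\mu(D;x)E)$ effective. Then $\pi^*D-tE=P+\bigl(N+(\mu(D;x)-t)E\bigr)$ presents the big divisor $\pi^*D-tE$ as a movable divisor plus an effective divisor, so property \textup{(1)} in \S\ref{subsec:Nakayama} gives $N_{\sigma}(\pi^*D-tE)\leq N+(\mu(D;x)-t)E$. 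Hence every prime component $\Gamma\neq E$ of $N_{\sigma}(\pi^*D-tE)$ is a component of $N=N_{\sigma}(\pi^*D-\mu(D;x)E)$, and by the hypothesis $x'\notin\Supp N_{\sigma}(\pi^*D-\mu(D;x)E)$ no such $\Gamma$ passes through $x'$. Since $E$ itself is not a component of $N_{\sigma}(\pi^*D-tE)$ (previous paragraph), this shows $x'$ lies off the support of $N_{\sigma}(\pi^*D-tE)|_E$, so $\alpha(t)=0$; then $\beta(t)=(P_{\sigma}(D_t)\cdot E)$, as claimed.

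The step I expect to require the most care is the interface between the two parts: checking that the surface statement genuinely applies on the blow-up $\overline X$ — the input $x\notin\Bminus(D)\Rightarrow E\not\subset\Supp N_{\sigma}(\pi^*D)$, and the conclusion that $E$ then remains off $\Supp N_{\sigma}(\pi^*D-tE)$ for all $t<\mu(D;x)$, which is \cite[Theorem 6.4]{LM09} in this incarnation — and extracting $\alpha\equiv0$ from the single boundary hypothesis at $t=\mu(D;x)$. The tool that makes the last point work cleanly is property \textup{(1)} of the Nakayama decomposition, i.e.\ the monotonicity of the negative part along the ray $\pi^*D-tE$, applied against the Zariski decomposition of the pseudoeffective divisor $\pi^*D-\mu(D;x)E$; this reduces the whole question to the behaviour of $N_{\sigma}$ at the endpoint $t=\mu(D;x)$, which is exactly where the hypothesis is imposed.
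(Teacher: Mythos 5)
The paper offers no proof of this proposition; it is stated as a citation to \cite[Theorem~6.4]{LM09}, with the infinitesimal addendum left as an evident specialization. Your write-up correctly reproduces the mechanism of the Lazarsfeld--Musta\c t\u a surface theorem (fibration, passage to the positive part, translation by the fixed divisor $N_{\sigma}(D-tC)|_C$) and then deduces the infinitesimal statement by applying the first part on $\overline X$ with $C=E$. The key step --- comparing $N_{\sigma}(\pi^*D-tE)$ against the surface Zariski decomposition of the boundary class $\pi^*D-\mu(D;x)E$ via property~(1) of \S\ref{subsec:Nakayama} to force $\alpha\equiv 0$ --- is exactly the right argument and is carried out cleanly; the hypothesis $x'\notin\Supp N_{\sigma}(\pi^*D-\mu(D;x)E)$ is used in precisely the way it was designed to be used.

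One small point is worth making explicit rather than attributing to ``implicit in the well-definedness of $\alpha$'': you do need $E\not\subset\Supp N_{\sigma}(\pi^*D-tE)$ for every $t\in[0,\mu)$, not only at $t=0$. This follows either by appealing to the corresponding fact in \cite{LM09} or, self-contained, from convexity of $t\mapsto\sigma_E(\pi^*D-tE)$ together with its vanishing at the two endpoints $t=0$ (from $x\notin\Bminus(D)$) and $t=\mu$ (forced by the hypothesis $x'\notin\Supp N_{\sigma}(\pi^*D-\mu E)$, since otherwise $E\subset\Supp$ and $x'\in E$). With that spelled out the argument is complete and matches the paper's intent.
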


\subsubsection{A strategy for computing infinitesimal NO bodies on threefolds}


\begin{proposition}\label{prop:iNOthreefold}
	Let $X$ be a smooth projective projective threefold with $L$ big on $X$. Let $x\in X$ with $x\not\in{\bf B}_-(L)$. Assume that $\rho:\widetilde X\to\overline X$ is a birational morphism with $\widetilde X$ smooth such that
	\begin{enumerate}
		\item $P_{\sigma}(\rho^*L_t)$ is nef for all $0\leq t<\mu=\mu(L;x)$, i.e., $\rho$ determines a simultaneous birational Fujita--Zariski decomposition for all $L_t$.
		\smallskip
		
		\item $(P_{\sigma}(\rho^*L_t)^2\cdot \widetilde E)>0$ for $0<t<\mu$. Together with $(1)$, this condition is equivalent to the strict transform $\widetilde E$ not being a component of ${\bf B}_+(P_{\sigma}(\rho^*L_t))$, and to $P_{\sigma}(\rho^*L_t)|_{\widetilde E}$ being big, both in the range $0<t<\mu$.
	\end{enumerate}
	Then
	\[\Delta_x(L)=\biggl\{(t,x,y)\ \mid\ \begin{cases}0\leq t\leq \mu(L;x)\\ 0\leq x\leq \sup\{s\ \mid\ (P_{\sigma}(\rho^*L_t)+s\rho^* E)|_{\widetilde E}\text{ is big}\}\\ 0\leq y\leq (P_{\sigma}((P_{\sigma}(\rho^*L_t)+x\rho^* E)|_{\widetilde E})\cdot(-\rho^* E|_{\widetilde E}))\end{cases}\biggr\}\]
\end{proposition}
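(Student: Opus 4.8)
The plan is to compute the Newton--Okounkov body $\Delta_x(L)=\Delta_{\overline Y_\bullet}(\pi^*L)$ slice by slice in the first coordinate $t=\nu_1$, using the birational model $\rho$ to reduce each slice to a two-dimensional Newton--Okounkov computation on the rational surface $\widetilde E$. Recall that by the slicing theorem for Newton--Okounkov bodies (from \cite{LM09}), the fiber of $\Delta_x(L)$ over $\nu_1=t$ is, up to closure and the boundary values of $t$, the Newton--Okounkov body on $E$ of the restricted graded linear series coming from $\pi^*L-tE$ restricted to $E$; more precisely $\Delta_x(L)_{\nu_1=t}$ is the Okounkov body, with respect to the flag $\overline Y_2\supset\overline Y_3=\{x'\}$ in $E\simeq\bb P^2$, of the restricted algebra $\bigoplus_m H^0(E, m(\pi^*L-tE)|_E)$-image inside $\bigoplus_m H^0(\overline X, m(\pi^*L-tE))$. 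The range of $t$ is $0\le t\le\mu(L;x)$ because $\mu$ is exactly where $L_t$ stops being big, and since $x\not\in\Bminus(L)$ the leftmost slice $t=0$ is the full generic iNObody of $L$ restricted appropriately, with no truncation at the $t=0$ end.

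The key reduction is to pass from $E\subset\overline X$ to $\widetilde E\subset\widetilde X$. Since $\rho$ is birational and $\widetilde X$ is smooth, and since $\rho$ restricted to $\widetilde E$ maps onto $E$ generically isomorphically (it is a sequence of blow-ups away from the generic point of $E$), pulling back the flag on $E$ to a flag $\widetilde E\supset \widetilde Y_2\supset\{x'\}$ on $\widetilde E$ does not change the valuation vector on sections, hence does not change the Okounkov body. Now I use hypothesis (1): $P_{\sigma}(\rho^*L_t)$ is nef, so $\rho^*L_t=P_{\sigma}(\rho^*L_t)+N_{\sigma}(\rho^*L_t)$ is an honest Fujita--Zariski decomposition on $\widetilde X$, and by property (3) of $\sigma$-decompositions in \S\ref{subsec:Nakayama} the pushforward recovers $P_{\sigma}(L_t)$ and the volumes agree: $\vol(L_t)=(P_{\sigma}(\rho^*L_t)^3)$. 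The restricted graded linear series of $\rho^*L_t$ to $\widetilde E$ has the same Okounkov body as that of the nef divisor $P_{\sigma}(\rho^*L_t)$ to $\widetilde E$, because the difference $N_{\sigma}(\rho^*L_t)$ either does not contain $\widetilde E$ (so its restriction is a fixed effective divisor that shifts nothing asymptotically once we account for it, and hypothesis (2) guarantees $\widetilde E\not\subset\Bplus$ hence the restriction is surjective in the relevant asymptotic sense) — this is where (2) is essential: it ensures $P_{\sigma}(\rho^*L_t)|_{\widetilde E}$ is big, so the restricted algebra has maximal Iitaka dimension $2$ and the two-dimensional slice has nonempty interior.

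For a fixed $t$, I then apply the surface case, Proposition \ref{prop:NOsurface}, to the big divisor $P_{\sigma}(\rho^*L_t)|_{\widetilde E}$ on the smooth surface $\widetilde E$, with flag $\widetilde E\supset\widetilde Y_2\supset\{x'\}$. But there is a subtlety: the two-dimensional slice is not simply the Okounkov body of $P_{\sigma}(\rho^*L_t)|_{\widetilde E}$; rather, because we are restricting a divisor on a threefold, the correct object is the Okounkov body of the restricted algebra, which by the theory of restricted volumes and the description in \cite{LM09} equals the union over $x\ge 0$ of the slices $\{x\}\times(\text{Okounkov body of } (P_{\sigma}(\rho^*L_t)+x\rho^*E)|_{\widetilde E} \text{ pushed to }\widetilde Y_2)$ — this is exactly the nested structure visible in the statement, where the second coordinate $x$ ranges up to $\sup\{s:(P_{\sigma}(\rho^*L_t)+s\rho^*E)|_{\widetilde E}\text{ big}\}$ and the third coordinate $y$ is governed by intersecting the positive part of that restricted divisor with $-\rho^*E|_{\widetilde E}$. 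Concretely: $\nu_2$ measures order of vanishing along $\widetilde Y_2$, which by the standard Zariski-decomposition description of surface Okounkov bodies at the second step corresponds to adding multiples of $\widetilde Y_2=\rho^*E|_{\widetilde E}$ (up to exceptional corrections absorbed into the $P_\sigma$), and $\nu_3$ then ranges over $[\alpha,\beta]$ with $\alpha=0$ (by the genericity of $x'$, exactly as in the infinitesimal part of Proposition \ref{prop:NOsurface}, since $x'\not\in\Bminus$ of the relevant restricted divisor for generic flag) and $\beta$ equal to the stated intersection number with $-\rho^*E|_{\widetilde E}$, which plays the role of the curve $C$ in that proposition. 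Taking the closure of the union over all $t\in[0,\mu)$ and $x$ in the stated range gives the claimed formula.

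The main obstacle is justifying rigorously that the restricted graded linear series of $\rho^*L_t$ to $\widetilde E$ has Okounkov body equal to that of $P_{\sigma}(\rho^*L_t)|_{\widetilde E}$ — i.e., that passing to the positive part does not shrink the restricted algebra — and more precisely the compatibility of the nested surface construction with the $\sigma$-decomposition at each level: one must check that $P_\sigma\bigl((P_{\sigma}(\rho^*L_t)+x\rho^*E)|_{\widetilde E}\bigr)$, computed on the surface $\widetilde E$, is the right positive part to intersect with $-\rho^*E|_{\widetilde E}$, accounting for the fact that $\rho^*E|_{\widetilde E}$ may itself be non-reduced or reducible and that $\widetilde E$ may meet the $\rho$-exceptional locus. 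I expect this to follow from the projection-formula behavior of $\sigma$-decompositions (property (3) in \S\ref{subsec:Nakayama}) applied on $\widetilde E$ together with hypothesis (2), but it is the step that requires care rather than a routine citation.
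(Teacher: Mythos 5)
Your overall structure matches the paper's: slice by $\nu_1=t$, pass to $\widetilde X$, identify the slice with a surface Newton--Okounkov body on $\widetilde E$, and finish with Proposition~\ref{prop:NOsurface}. But the step you flag at the end as needing ``care rather than a routine citation'' is a genuine gap, and it is the central point of the proof. You should not expect it to follow from the projection-formula behavior of $\sigma$-decompositions (property (3) of \S\ref{subsec:Nakayama}); that property relates positive parts under birational pullback on the ambient variety and says nothing about restricted linear series. The fact you actually need is the following, from \cite{LM09} (and the restricted-volume theory of \cite{ELMNP06}): if $D$ is a nef and big divisor on a smooth projective variety and $Y_1$ a prime divisor with $Y_1\not\subset\Bplus(D)$, then the restricted Okounkov body $\Delta_{X|Y_1}(D)$ --- i.e.\ the $\nu_1=0$ slice of $\Delta_{Y_\bullet}(D)$ --- coincides with the complete Okounkov body $\Delta_{Y_{\bullet}\cap Y_1}(D|_{Y_1})$. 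Hypothesis (1) supplies nefness of $P_\sigma(\rho^* L_t)$, and hypothesis (2) is precisely the statement that $\widetilde E\not\subset\Bplus(P_\sigma(\rho^*L_t))$. This is the only place those hypotheses are used, and without invoking this specific result your ``I expect this to follow'' leaves the chain $\Delta_{\widetilde Y_\bullet}(P_\sigma(\rho^*L_t))_{\nu_1=0}=\Delta_{\widetilde Y_\bullet}(P_\sigma(\rho^*L_t)|_{\widetilde E})$ unproved.

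Two further imprecisions, smaller but worth flagging. First, you also need $\Delta_{\overline Y_\bullet}(L_t)=\Delta_{\widetilde Y_\bullet}(\rho^*L_t)=\Delta_{\widetilde Y_\bullet}(P_\sigma(\rho^*L_t))$, which requires choosing the flag so that $\widetilde Y_3$ avoids $\Supp N_\sigma(\rho^*L_t)$ and $\overline Y_3$ avoids $\Supp N_\sigma(L_t)$ simultaneously for all $t\in(0,\mu)$; this works because $\widetilde E$ itself is never in $\Supp N_\sigma(\rho^*L_t)$ (using $x\not\in\Bminus(L)$) and because only countably many prime divisors can occur in these supports, a point the paper makes explicitly. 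Second, your ``nested union over $x$'' description overcomplicates the surface step: once you have $\Delta_x(L)_{\nu_1=t}=\Delta_{\widetilde Y_\bullet}(P_\sigma(\rho^*L_t)|_{\widetilde E})$, you simply apply Proposition~\ref{prop:NOsurface} on the surface $\widetilde E$ with $D=P_\sigma(\rho^*L_t)|_{\widetilde E}$ and $C=\widetilde Y_2\equiv-\rho^*E|_{\widetilde E}$, so $D-xC=(P_\sigma(\rho^*L_t)+x\rho^*E)|_{\widetilde E}$, which yields the stated bounds directly. Your worry about $\rho^*E|_{\widetilde E}$ being reducible or non-reduced is not an obstruction: $\widetilde Y_2$ is the strict transform of a very general line in $E\simeq\bb P^2$, hence is an irreducible reduced curve of class $-\rho^*E|_{\widetilde E}$, and the formula uses only its numerical class.
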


\begin{proof}
	Fix $\overline Y_{\bullet}$ a general linear flag in $E\simeq\bb P^{2}$ on $\overline X$, and let $\widetilde Y_{\bullet}$ be its strict transform in $\widetilde X$. This is well-defined when $\overline Y_3$ is a point in the isomorphism locus of $\rho$. We have $\Delta_x(L)=\Delta_{\widetilde Y_{\bullet}}(\rho^*\pi^*L)$.
	
	By continuity of slices in a convex body, it is sufficient to work in the range where $L_t$ is big, which is $t\in(0,\mu)$. For such $t$, the negative parts $N_{\sigma}(\rho^*L_t)$ vary continuously with $t$. Furthermore only finitely many reduced divisors appear in their support since $N_{\sigma}(\rho^*L_t)\leq N_{\sigma}(\rho^*L_{\mu})+(\mu-t)\rho^*E$. By the assumption that $x\not\in{\bf B}_-(L)$, the strict transform $\widetilde E$ is not a component of $N_{\sigma}(\rho^*L_t)$. For a general choice of a point $\overline Y_3\in E$, then for all $t\in(0,\mu)$ the point $\widetilde Y_3$ is not in ${\rm Supp}(N_{\sigma}(\rho^*L_t))$ and $\overline Y_3$ is not in ${\rm Supp}(N_{\sigma}(L_t))$. It follows that $\Delta_{\overline Y_{\bullet}}(L_t)=\Delta_{\widetilde Y_{\bullet}}(\rho^*L_t)=\Delta_{\widetilde Y_{\bullet}}(P_{\sigma}(\rho^*L_t))$.
	
	We have $\Delta_x(L)_{\nu_1\geq t}=\Delta_{\overline Y_{\bullet}}(\pi^*L)_{\nu_1\geq t}$. Up to translation by $(t,0,0)$, this is $\Delta_{\overline Y_{\bullet}}(L_t)=\Delta_{\widetilde Y_{\bullet}}(\rho^*L_t)=\Delta_{\widetilde Y_{\bullet}}(P_{\sigma}(\rho^*L_t))$. Then $\Delta_x(L)_{\nu_1=t}=\Delta_{\widetilde Y_{\bullet}}(P_{\sigma}(\rho^*L_t))_{\nu_1=0}=\Delta_{\widetilde X|\widetilde E}(P_{\sigma}(\rho^*L_t))=\Delta_{\widetilde Y_{\bullet}}(P_{\sigma}(\rho^*L_t)|_{\widetilde E})$. In the surface $\widetilde E$, the curve $\widetilde Y_2$ in the flag $\widetilde Y_{\bullet}$ is the strict transform of a very general line, a divisor of class $-\rho^*E|_{\widetilde E}$. Apply the global (not infinitesimal) case of Proposition \ref{prop:NOsurface}. Note that ${\rm ord}_{\widetilde Y_3}(N_{\sigma}(P_{\sigma}(\rho^*L_t)|_{\widetilde E}-s\widetilde Y_2))=0$ whenever $\widetilde Y_3$ is very general and $P_{\sigma}(\rho^*L_t)|_{\widetilde E}-s\widetilde Y_2$ is pseudoeffective. Indeed $\widetilde Y_2$ is not in the support of any $N_{\sigma}(P_{\sigma}(\rho^*L_t)|_{\widetilde E}-s\widetilde Y_2)$ since $\widetilde Y_2$ is nef on $\widetilde E$. Furthermore as $D$ ranges through pseudoeffective divisors on $\widetilde E$, the supports of $N_{\sigma}(D)$ range through a countable set of closed reduced divisors.
\end{proof}

\section{Infinitesimal picture for two surfaces}

On surfaces Zariski decompositions exist without the need for further blow-ups, and iNObodies are computable by Proposition \ref{prop:NOsurface}. We present here two relevant cases that appear as components of our 3-dimensional products.

\subsection{Products of curves}

\begin{lemma}[Dimension 2]\label{lem:twoelliptic}
	

	Let $C_1,C_2$ be smooth projective curves. Let $x=(c_1,c_2)\in C_1\times C_2$. Denote by $f_1$ the fiber $\{c_1\}\times C_2$ and by $f_2$ the fiber $C_1\times\{c_2\}$. Let $\pi:X\to C_1\times C_2$ be the blow-up of $x$ with exceptional divisor $E$. Denote by $\overline{f_1}$ and $\overline{f_2}$ the strict transforms on $X$ of the fibers through $x$. Fix $d_1\geq d_2> 0$. 
	Then 
	\begin{enumerate}
		\item $\mu(d_1f_1+d_2f_2;x)=d_1+d_2$ and $\epsilon(d_1f_1+d_2f_2;x)=d_2$.
		\smallskip
		
		\item For $0\leq t\leq d_1+d_2$ put 
		\[L_t\deq\pi^*(d_1f_1+d_2f_2)-tE\quad\text{ and }\quad\tau\deq d_1+d_2-t.\] Then  
		\[P_{\sigma}(L_t)=\min\{d_1,\tau\}\overline f_1+\min\{d_2,\tau\}\overline f_2+\tau E\] 
		\smallskip
		
		\item $\vol(L_t)=\begin{cases}2d_1d_2-t^2&\text{, if } 0\leq t\leq d_2\\ d_2^2+2d_1d_2-2d_2t&\text{, if }d_2\leq t\leq d_1\\
			(d_1+d_2-t)^2&\text{, if }d_1\leq t\leq d_1+d_2\end{cases}$. 
		\smallskip
		
		\item For a general linear flag $Y_{\bullet}$ in $E$ we have 
		\[
		\inob{x}{d_1f_1+d_2f_2}=\textup{ convex hull }\{(0,0),(d_2,d_2),(d_1,d_2),(d_1+d_2,0)\}
		\]
		is an isosceles trapezoid that degenerates to an isosceles triangle when $d_1=d_2$. 
	\end{enumerate}
\end{lemma}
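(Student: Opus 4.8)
The plan is to reduce the whole statement to intersection theory on the rational surface $X$ together with the surface Newton--Okounkov formula of Proposition \ref{prop:NOsurface}. First I would record the numerics on $X$: writing $\overline{f_i}=\pi^{*}f_i-E$ one has $f_1\cdot f_2=1$, $f_i^2=0$, $E^2=-1$, hence $\overline{f_i}^{2}=-1$, $\overline{f_1}\cdot\overline{f_2}=0$, $\overline{f_i}\cdot E=1$, and $L_t=d_1\overline{f_1}+d_2\overline{f_2}+\tau E$ with $L_t\cdot\overline{f_1}=d_2-t$, $L_t\cdot\overline{f_2}=d_1-t$, $L_t\cdot E=t$. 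The one geometric input I need beyond formal manipulation is that $H\deq\pi^{*}(f_1+f_2)-E=\overline{f_1}+\overline{f_2}+E$ is nef, with $(H^2)=1$. This is the step I expect to be the main obstacle, since one must test $H$ against \emph{every} irreducible curve on $X$, not just $E,\overline{f_1},\overline{f_2}$. The cases $C=f_i$ (giving $0$), $C$ not through $x$, and $C=E$ (giving $1$) are immediate; for the strict transform of an irreducible $C\subset C_1\times C_2$ through $x$ with $C\neq f_1,f_2$ one uses $\mult_x C\leq C\cdot f_i$ to get $H\cdot\widetilde C=C\cdot(f_1+f_2)-\mult_x C\geq\mult_x C\geq 0$. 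Since $H$ is the pullback of an ample class minus $E$, its nefness says $\epsilon(f_1+f_2;x)\geq 1$; writing $\pi^{*}(\alpha f_1+\beta f_2)-tE$ as a nonnegative combination of $H$ and pullbacks of ample classes when $t\leq\min\{\alpha,\beta\}$, and intersecting with $\overline{f_1}$ for the reverse bound, this upgrades to $\epsilon(\alpha f_1+\beta f_2;x)=\min\{\alpha,\beta\}$ for all $\alpha,\beta\geq 0$.

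Part (1) then drops out: $\epsilon(d_1f_1+d_2f_2;x)=\min\{d_1,d_2\}=d_2$; the divisor $L_t$ is effective for $\tau\geq 0$, and is big for $0<t<d_1+d_2$ because $L_t=sH+\bigl((d_1-s)\overline{f_1}+(d_2-s)\overline{f_2}+(\tau-s)E\bigr)$ with $0<s<\min\{d_1,d_2,\tau\}$ realizes $L_t$ as $sH$ plus an effective divisor, so $\vol(L_t)\geq s^2(H^2)>0$; finally $L_t\cdot H=\tau<0$ against the nef class $H$ rules out pseudoeffectivity for $t>d_1+d_2$. Hence $\mu=d_1+d_2$.

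For part (2) I would guess the decomposition and then invoke uniqueness of Zariski decompositions (on a surface $P_{\sigma}$ is the Zariski positive part). Put $a=\min\{d_1,\tau\}$, $b=\min\{d_2,\tau\}$, $P_t=a\overline{f_1}+b\overline{f_2}+\tau E$ and $N_t=L_t-P_t=(d_1-a)\overline{f_1}+(d_2-b)\overline{f_2}$. Then $N_t$ is effective with support among the two disjoint $(-1)$-curves $\overline{f_1},\overline{f_2}$, hence with negative definite intersection matrix, while $P_t=\pi^{*}(af_1+bf_2)-(a+b-\tau)E$ is nef because one checks $0\leq a+b-\tau\leq\min\{a,b\}=\epsilon(af_1+bf_2;x)$ (using $a,b\leq\tau$; when $\tau=0$ one has $P_t=0$). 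Finally $P_t\cdot\overline{f_1}=\tau-a$ vanishes exactly when $\tau\leq d_1$, i.e.\ $t\geq d_2$, which is precisely the range in which $\overline{f_1}$ is a component of $N_t$, and symmetrically for $\overline{f_2}$; so the orthogonality relations hold and $P_t=P_{\sigma}(L_t)$. Splitting into the three ranges $[0,d_2]$, $[d_2,d_1]$, $[d_1,d_1+d_2]$ gives the displayed formula, where $P_{\sigma}(L_t)$ equals $\pi^{*}(d_1f_1+d_2f_2)-tE$, then $\pi^{*}(\tau f_1+d_2f_2)-d_2E$, then $\tau H$.

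Parts (3) and (4) are then bookkeeping. For (3), $\vol(L_t)=(P_{\sigma}(L_t)^2)$ since $P_{\sigma}(L_t)$ is nef on a surface, and substituting the three forms from part (2) produces $2d_1d_2-t^2$, then $2\tau d_2-d_2^2=d_2^2+2d_1d_2-2d_2t$, then $\tau^2=(d_1+d_2-t)^2$. For (4), $D\deq d_1f_1+d_2f_2$ is ample so ${\bf B}_-(D)=\varnothing$; I apply the infinitesimal case of Proposition \ref{prop:NOsurface} with the flag $\overline X\supset E\supset\{x'\}$ for very general $x'$. Since $\mu(D;x)=d_1+d_2$ and $N_{\sigma}(\pi^{*}D-\mu E)=d_1\overline{f_1}+d_2\overline{f_2}$ meets $E$ only in the two points $\overline{f_1}\cap E$, $\overline{f_2}\cap E$, a very general $x'$ avoids them, whence $\alpha(t)\equiv 0$ and $\beta(t)=(P_{\sigma}(L_t)\cdot E)$, equal to $t$ on $[0,d_2]$, to $d_2$ on $[d_2,d_1]$, and to $d_1+d_2-t$ on $[d_1,d_1+d_2]$. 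The region under the graph of $\beta$ is the polygon with vertices $(0,0),(d_2,d_2),(d_1,d_2),(d_1+d_2,0)$, whose two slanted edges have slopes $\pm 1$ and equal length $\sqrt2\,d_2$, so it is an isosceles trapezoid, collapsing to the isosceles triangle $(0,0),(d_2,d_2),(2d_2,0)$ when $d_1=d_2$. Throughout, the only point that is not pure formal manipulation is the nefness of $H$ in the first paragraph; everything else propagates from it together with the surface Newton--Okounkov formula.
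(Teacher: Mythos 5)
Your proposal is correct, and its skeleton is the same as the paper's: reduce to intersection theory on the blow-up, exhibit the Zariski decomposition of $L_t$ explicitly, and apply Proposition~\ref{prop:NOsurface}. Where you diverge is in how you establish the key nefness facts. You isolate the class $H=\pi^*(f_1+f_2)-E$ as the ``one geometric input'' and prove it nef by enumerating \emph{all} irreducible curves on $X$, using the bound $\mult_xC\leq C\cdot f_i$ for curves through $x$ transverse to the fibers. The paper instead applies the technique of \S\ref{subsec:neftest}: write a candidate as $P+N$ with $P$ nef and $N$ effective (e.g.\ $L_{d_2}=d_1\pi^*f_1+d_2\overline{f_2}$, or $H=\pi^*f_1+\overline{f_2}$), and then nefness only needs to be checked on the components of $N$, i.e.\ against $\overline{f_2}$. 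This collapses your ``main obstacle'' to a one-line intersection computation, and avoids the classification of curves through $x$. Likewise, your upper bound $\mu\leq d_1+d_2$ comes from $L_t\cdot H=\tau<0$, whereas the paper reads it off from $L_{d_1+d_2}=d_1\overline{f_1}+d_2\overline{f_2}$ being an effective divisor with vanishing Zariski positive part; both are fine. Your deduction of a general formula $\epsilon(\alpha f_1+\beta f_2;x)=\min\{\alpha,\beta\}$ and the uniqueness-of-Zariski-decomposition framing of part (2) are slightly more systematic than the paper's case-by-case exhibition, but they encode the same computations and the same orthogonality checks. Everything downstream, in parts (3) and (4), matches.
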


\begin{proof}
	(1). The strict transforms $\overline{f_1}$ and $\overline{f_2}$ are negative curves that do not intersect. Then $d_1\pi^*f_1+{d_2}\pi^*f_2-(d_1+{d_2})E=d_1\overline{f_1}+{d_2}\overline{f_2}$ is effective and negative (equal to the negative part of its Zariski decomposition), hence not big.
	
For the Seshadri constant computation, the divisor
	$L_{d_2}\coloneqq d_1\pi^*f_1+{d_2}\pi^*f_2-d_2E\equiv {d_1}\pi^*f_1+d_2\overline{f_2}$ is effective. Since $\pi^*f_1$ is nef and $L_{d_2}\cdot\overline{f_2}={d_1}-d_2\geq 0$, the divisor $L_{d_2}$ is nef. It is not ample since $L_{d_2}\cdot\overline{f_1}=0$. In particular $\overline{f_1}$ is a Seshadri curve.
	
(2). When $0\leq t\leq d_2$, then $\tau\geq d_1\geq d_2$ and $L_t=\min\{d_1,\tau\}\overline{f_1}+\min\{d_2,\tau\}\overline{f_2}+\tau E$ is nef by (1).	

When $d_2\leq t\leq {d_1}$, then $t,\tau\in[d_2,d_1]$ and
$L_t=d_1\overline f_1+{d_2}\overline{f_2}+(d_1+{d_2}-t)E=(\tau\overline {f_1}+d_2\overline{f_2}+\tau E)+(t-d_2)\overline {f_1}$. 
	As in $(1)$, we have that $\tau\overline {f_1}+d_2\overline{f_2}+\tau E=\tau\pi^*f_1+d_2\overline{f_2}$ is nef (has intersection $\tau-d_2\geq 0$ with $\overline{f_2}$) and has 0 intersection with the negative curve $\overline {f_1}$.
	
	When ${d_1}\leq t\leq d_1+{d_2}$, then $0\leq\tau\leq d_2\leq d_1$ and $L_t=\tau(\overline {f_1}+\overline{f_2}+E)+((t-{d_2})\overline{f_1}+(t-d_1)\overline{f_2})$. The divisor $\overline{f_1}+\overline{f_2}+E=\pi^*(f_1+f_2)-E$ is nef as in (1), and has 0 intersection with the negative curves $\overline{f_1}$ and $\overline{f_2}$ that do not intersect each other.
	
	(3) follows from (2) since $\vol(L_t)=P_{\sigma}(L_t)^2$. 
	
	(4). For $t<d_1+d_2$, we have $(P_{\sigma}(L_t)\cdot E)>0$ from the computations above, hence $E\not\subset{\bf B}_+(L_t)$. The result is an easy application of Proposition \ref{prop:NOsurface}.
\end{proof}

We also consider a class of examples where the polarization is not of box-product type.

\begin{lemma}
	Let $C$ be a smooth projective curve of genus $2$. 
	Let $X=C\times C$ and $x\in X$ a point not on the diagonal $\Delta$. Let $L=d_1f_1+d_2f_2+\Delta$ with $d_1\geq d_2>1$ so that $L$ is ample. Then
	\begin{enumerate}
		\item $\mu(L;x)=d_1+d_2$.
		\smallskip
		
		\item With $\overline f_i$ and $\tau$ as in the Lemma above, $P_{\sigma}(L_t)=\min\{d_1,\tau+1\}\overline f_1+\min\{d_2,\tau+1\}\overline f_2+\overline\Delta+\tau E$.
		\smallskip
		
		\item $\inob{x}{L}=\textup{convex hull}\left\{(0,0),\ (d_2+1,d_2+1),\  (d_1+1,d_2+1),\ (d_1+d_2,2),\ (d_1+d_2,0)\right\}$.
	\end{enumerate}
\end{lemma}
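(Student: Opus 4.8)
The plan is to imitate the proof of Lemma~\ref{lem:twoelliptic}: record intersection numbers on $\Bl_xX$, verify the Zariski decompositions claimed in (2) by a case split on $\tau=d_1+d_2-t$, deduce (1) from them, and then feed everything into the infinitesimal case of Proposition~\ref{prop:NOsurface}. On $X=C\times C$ we have $f_i^2=0$, $f_1\cdot f_2=f_i\cdot\Delta=1$ and $\Delta^2=2-2g(C)=-2$, so $(L^2)=2d_1d_2+2d_1+2d_2-2$. Since $x\in f_1\cap f_2$ but $x\notin\Delta$, on $\widetilde X\deq\Bl_xX$ the classes $\overline{f_i}=\pi^*f_i-E$, $\overline\Delta=\pi^*\Delta$, $E$ satisfy $\overline{f_i}^2=E^2=-1$, $\overline\Delta^2=-2$, $\overline{f_1}\cdot\overline{f_2}=\overline\Delta\cdot E=0$, $\overline{f_i}\cdot\overline\Delta=\overline{f_i}\cdot E=1$, and $L_t=d_1\overline{f_1}+d_2\overline{f_2}+\overline\Delta+\tau E$. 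The computation is made elementary by the following test on $\widetilde X$: any irreducible curve distinct from $\overline{f_1},\overline{f_2},\overline\Delta,E$ meets each of these four non-negatively, so a class $D=a_1\overline{f_1}+a_2\overline{f_2}+a_3\overline\Delta+a_4E$ with all $a_i\ge 0$ is nef if and only if the four numbers $(D\cdot\overline{f_1}),(D\cdot\overline{f_2}),(D\cdot\overline\Delta),(D\cdot E)$ are $\ge 0$.

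For (2) and (1) I would split $[0,d_1+d_2]$ into the subranges $\tau\ge d_1-1$, $d_2-1\le\tau\le d_1-1$, and $0\le\tau\le d_2-1$. On the first, the claimed positive part is $L_t$ itself, and the four test numbers are $\tau+1-d_1$, $\tau+1-d_2$, $d_1+d_2-2$, $t$, all $\ge0$ there (using $d_1\ge d_2>1$), so $L_t$ is nef and $N_\sigma(L_t)=0$. On the second I take $P=(\tau+1)\overline{f_1}+d_2\overline{f_2}+\overline\Delta+\tau E$ and $N=(d_1-\tau-1)\overline{f_1}$; on the third, $P=(\tau+1)(\overline{f_1}+\overline{f_2})+\overline\Delta+\tau E$ and $N=(d_1-\tau-1)\overline{f_1}+(d_2-\tau-1)\overline{f_2}$. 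In each of these, $N\ge0$ is supported on disjoint $(-1)$-curves (so its intersection form is negative definite), $P$ has non-negative coefficients and non-negative test numbers (hence is nef by the test above), and $(P\cdot\overline{f_i})=0$ on the components of $N$; by uniqueness of the Zariski decomposition this identifies $P=P_\sigma(L_t)$, $N=N_\sigma(L_t)$, proving (2). A direct computation gives $\vol(L_t)=(P^2)$ equal to $(L^2)-t^2$, then $2\tau(d_2+1)-(d_2-1)^2$, then $\tau(\tau+4)$ on the three subranges; this is a continuous decreasing function of $t$, positive for $t<d_1+d_2$ and zero at $t=d_1+d_2$. Hence $\mu(L;x)\ge d_1+d_2$; conversely $P_\sigma(L_{d_1+d_2})=\overline{f_1}+\overline{f_2}+\overline\Delta$ is nef with $(P_\sigma(L_{d_1+d_2})\cdot L_{d_1+d_2})=0$ and $(P_\sigma(L_{d_1+d_2})\cdot E)=2$, so $(P_\sigma(L_{d_1+d_2})\cdot L_t)<0$ for $t>d_1+d_2$ and $L_t$ is not pseudoeffective. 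Thus $\mu(L;x)=d_1+d_2$, which is (1).

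For (3) I would apply the infinitesimal part of Proposition~\ref{prop:NOsurface}. Since $L$ is ample we have $x\notin{\bf B}_-(L)$, and $N_\sigma(\pi^*L-\mu E)=(d_1-1)\overline{f_1}+(d_2-1)\overline{f_2}$ is supported on $\overline{f_1}\cup\overline{f_2}$, so a general point $x'\in E$ lies off its support; the proposition then gives that $\inob{x}{L}$ is the closure of $\{(t,y)\mid 0\le t<\mu,\ 0\le y\le(P_\sigma(L_t)\cdot E)\}$. From the formulas for $P_\sigma(L_t)$ in (2), the function $(P_\sigma(L_t)\cdot E)$ equals $t$ on $[0,d_2+1]$, the constant $d_2+1$ on $[d_2+1,d_1+1]$, and $d_1+d_2+2-t$ on $[d_1+1,d_1+d_2]$. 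This piecewise-linear function is concave, so the region below its graph, together with the vertical segment at $t=d_1+d_2$ coming from the closure, is the convex hull of $(0,0)$, $(d_2+1,d_2+1)$, $(d_1+1,d_2+1)$, $(d_1+d_2,2)$ and $(d_1+d_2,0)$, as claimed.

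The only mildly delicate points are justifying the nefness test on $\widetilde X$ and checking that the test intersection numbers of each $P$ stay non-negative throughout its subrange (repeatedly using $d_1\ge d_2>1$); the ampleness hypothesis on $L$ also guarantees $x\notin{\bf B}_-(L)$, needed for part (3). Everything else is bookkeeping parallel to Lemma~\ref{lem:twoelliptic}.
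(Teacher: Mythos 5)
Your proof is correct and follows the same route the paper intends: the same intersection table, the same nefness test from \S\ref{subsec:neftest}, the same three-range case split on $\tau$ with Zariski decompositions verified by checking nefness of $P$, negative definiteness and $P\cdot N_i=0$, and then Proposition~\ref{prop:NOsurface} for the body. You have merely spelled out the details that the paper condenses into ``the remaining parts are analogous to the previous Lemma,'' and your alternate argument for $\mu(L;x)\le d_1+d_2$ (pairing $L_t$ against the nef class $\overline f_1+\overline f_2+\overline\Delta$ for $t>d_1+d_2$) is an equally valid variant of the paper's volume-vanishing argument.
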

\begin{proof}
	The strict transforms $\overline f_1$, $\overline f_2$, $\overline\Delta$ and $E$ intersect as follows
	
	\begin{center} 	
		\begin{tabular}{|c|c|c|c|c|}
			\hline & $\overline f_1$&$\overline f_2$&$\overline\Delta$&E\\
			\hline $\overline f_1$&$-1$ &$0$ &$1$ &$1$ \\
			\hline $\overline f_2$& $0$&$-1$ &$1$ &$1$ \\
			\hline $\overline\Delta$& $1$&$1$ &$-2$ &$0$ \\
			\hline $E$&$1$ &$1$ &$0$ &$-1$ \\
			\hline
		\end{tabular}
	\end{center} 
	The class $P\deq \overline f_1+\overline f_2+\overline\Delta$ is nef and pairs trivially against any of its components, in particular $(P^2)=0$ and $P$ is not big. Then $L_{d_1+d_2}=P+((d_1-1)\overline f_1+(d_2-1)\overline f_2)$ is a Zariski decomposition, thus $\vol(L_{d_1+d_2})=(P^2)=0$. This gives part (1).
	The remaining parts are analogous to the previous Lemma.
\end{proof}

\subsection{Genus two Jacobians} 
Let $C$ be a smooth projective curve of genus $2$ and let $J_i\deq\textup{Pic}^i(C)$ such that $J_0$ is the Jacobian surface of $C$. 

\begin{proposition}\label{prop:genus2surface}
Under the notation above, let $\theta$ be the principal polarization on $J_0$ and $\pi:\overline J_0\to J_0$ be the blow-up of the origin with exceptional divisor $E$. Then 
	\begin{enumerate}
		\item $\pi^*\theta-tE$ is nef if $0\leq t\leq\frac 43$, and $\pi^*\theta-tE$ is effective if $0\leq t\leq\frac 32$. The class $4\pi^*\theta-6E$ is represented by the strict transform $\overline R$ of the curve $R=\{\cal O_C(2x-2p)\ \mid\ x\in C\}$ where $p$ is any of the 6 Weierstrass points. Furthermore $\overline R$ is irreducible with negative self-intersection.
		\smallskip
		
		\item The positive part in the Zariski decomposition of $L_t\deq\pi^*\theta-tE$ is
		\[P_{\sigma}(L_t)=\begin{cases}
			\pi^*\theta-tE&\text{, if }0\leq t\leq\frac 43\\
			(9-6t)(\pi^*\theta-\frac 43E)&\text{, if }\frac 43\leq t\leq\frac 32
		\end{cases}\]
	The negative part is supported on $\overline R$.
	\smallskip
	
	\item The infinitesimal Newton--Okounkov body of $\theta$ is the triangle with vertices at $(0,0)$, $(\frac 32,0)$ and $(\frac 43,\frac 43)$. 	
	\end{enumerate}
\end{proposition}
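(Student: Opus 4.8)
The plan is to concentrate all the work in the two local positivity invariants $\epsilon(\theta;0)=\tfrac43$ and $\mu(\theta;0)=\tfrac32$, together with the geometry of the Seshadri curve $\overline R$; parts (2) and (3) will then follow formally, by uniqueness of Zariski decompositions on surfaces and by Proposition~\ref{prop:NOsurface}. I would start with $R$. Since $2p\sim K_C$ for every Weierstrass point $p$, the class $\sO_C(2x-2p)$ is independent of the choice among the six Weierstrass points, and $R$ is the image of the smooth theta divisor $\Theta=\{[x]-[p]:x\in C\}$ under multiplication by two $[2]\colon J_0\to J_0$. Two points of $C$ have the same $[2]$-image exactly when their difference is a nonzero $2$-torsion point, a condition cutting out a finite set in $C\times C$, so $[2]|_\Theta$ is birational onto $R$ and $R$ is irreducible. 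For the numerical class I would use $[2]^{-1}(R)=\bigcup_{\eta\in J_0[2]}(\Theta+\eta)$, whence $4[R]=[2]^*[R]=16\theta$, so $[R]=4\theta$, $(R^2)=32$ and $(\theta\cdot R)=8$. For the multiplicity at the origin, $[2]|_\Theta^{-1}(0)$ is precisely the set of six Weierstrass points (those $x$ with $2x\sim K_C$); since $[2]$ is étale and $\Theta$ smooth, $[2]|_\Theta$ is an immersion, so the six branches of $R$ at $0$ are smooth, and their tangent directions — the origin-translates of the tangent lines of $\Theta$ at the six points, equivalently the lines annihilated by the unique $1$-forms with divisors $2w_i$ — are pairwise distinct. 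Hence $\mult_0 R=6$, the strict transform is $\overline R=4\pi^*\theta-6E$, and $(\overline R^2)=32-36=-4<0$.

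\textbf{Part (1).} The upper bounds come from $\overline R$: $L_t\cdot\overline R=8-6t$ forces $t\le\tfrac43$ for nefness, and pairing a pseudoeffective $L_t$ with the nef class $L_{4/3}$ gives $2-\tfrac43 t\ge0$, i.e.\ $t\le\tfrac32$; conversely $\tfrac14\overline R\equiv\pi^*\theta-\tfrac32E$ is effective, so $\mu(\theta;0)=\tfrac32$. The heart of the matter is nefness of $\pi^*\theta-\tfrac43E$, i.e.\ the inequality $(\theta\cdot\gamma)\ge\tfrac43\,\mult_0\gamma$ for every irreducible curve $\gamma\subset J_0$ (it is automatic on $E$ and on curves missing $0$). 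Setting $m=\mult_0\gamma$: the $\delta$-invariant estimate gives $(\gamma^2)=2p_a(\gamma)-2\ge 2\binom m2+2g(\gamma)$, and because an abelian surface contains no rational curves and the image of a smooth genus-$\le1$ curve under a morphism is a smooth translate of a subgroup, one has $g(\gamma)\ge1$ always and $g(\gamma)\ge2$ as soon as $m\ge2$; combining $(\gamma^2)\ge m^2-m+2$ with the Hodge index inequality $(\theta\cdot\gamma)^2\ge(\theta^2)(\gamma^2)$ and integrality of $(\theta\cdot\gamma)$ yields $(\theta\cdot\gamma)\ge\tfrac43m$ for all $m\ge2$ (a short check for $2\le m\le5$, immediate for $m\ge6$, with equality exactly for $\gamma=R$). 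The remaining case is $m\le1$: if $(\gamma^2)>0$ then Hodge index already gives $(\theta\cdot\gamma)\ge2$, and if $(\gamma^2)=0$ then $\gamma$ is a smooth elliptic curve through $0$, hence an elliptic subgroup $E'$, and $(\theta\cdot E')\ge2$ because $(\theta\cdot E')=1$ would split $(J_0,\theta)$ as a product of principally polarized abelian varieties, contradicting the irreducibility of the theta divisor of a smooth connected curve. I expect this verification — in particular making the marginal cases $m=4,5$ go through and invoking indecomposability of the Jacobian — to be the main obstacle.

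\textbf{Part (2).} For $0\le t\le\tfrac43$ the divisor $L_t$ is nef by part (1), so $P_\sigma(L_t)=L_t$. For $\tfrac43\le t\le\tfrac32$ I would exhibit the candidate $L_t=(9-6t)\bigl(\pi^*\theta-\tfrac43E\bigr)+\tfrac{3t-4}{2}\overline R$, and check that $(9-6t)\bigl(\pi^*\theta-\tfrac43E\bigr)$ is nef, that $\tfrac{3t-4}{2}\overline R$ is effective with both coefficients nonnegative on this interval, that $\bigl(\pi^*\theta-\tfrac43E\bigr)\cdot\overline R=0$, and that $(\overline R^2)<0$; by uniqueness of the Zariski decomposition on a surface this is $P_\sigma(L_t)+N_\sigma(L_t)$, with negative part supported on $\overline R$.

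\textbf{Part (3).} I would apply the infinitesimal case of Proposition~\ref{prop:NOsurface} at a very general point $x'\in E$. Since $\theta$ is ample, $x'\notin\Bminus(\theta)$; and since $N_\sigma(L_{3/2})=\tfrac14\overline R$ meets $E$ in only six points, a very general $x'$ lies off $\Supp N_\sigma(L_{3/2})$. Hence the lower boundary is $\alpha\equiv0$ and the upper boundary is $\beta(t)=(P_\sigma(L_t)\cdot E)$, which equals $t$ on $[0,\tfrac43]$ and $\tfrac43(9-6t)=12-8t$ on $[\tfrac43,\tfrac32]$. The region under this graph is exactly the triangle with vertices $(0,0)$, $(\tfrac32,0)$ and $(\tfrac43,\tfrac43)$; as a consistency check its area is $1=\tfrac12(\theta^2)$, matching $\tfrac1{2!}\vol(\theta)$.
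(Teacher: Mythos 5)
Your proof is correct, but for the key step — nefness of $\pi^*\theta-\tfrac43E$ — you take a genuinely different and much longer route than the paper. The paper observes that this class is represented by the effective cycle $\tfrac14\overline R+\tfrac16 E$, computes that its intersection with each component is $\bigl(\tfrac14\overline R+\tfrac16 E\bigr)\cdot\overline R=\tfrac14(-4)+\tfrac16(6)=0$ and $\bigl(\tfrac14\overline R+\tfrac16 E\bigr)\cdot E=\tfrac14(6)+\tfrac16(-1)=\tfrac43>0$, and concludes by the test for nefness in \S\ref{subsec:neftest}: any irreducible curve not equal to $\overline R$ or $E$ has nonnegative intersection with an effective cycle supported on $\overline R\cup E$. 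By contrast, you re-derive $\epsilon(\theta;0)=\tfrac43$ from scratch, checking $(\theta\cdot\gamma)\geq\tfrac43\mult_0\gamma$ for arbitrary irreducible $\gamma$ via adjunction and the $\delta$-invariant, the Hodge index theorem, the rigidity fact that the normalization map of a geometric-genus-$\leq1$ curve in an abelian surface lands on a translate of a subgroup, and indecomposability of the principally polarized Jacobian to dispose of $(\theta\cdot E')=1$. This works (and I checked the marginal cases $2\leq m\leq5$ and the quadratic inequality for $m\geq6$ do go through), but it spends a page of abelian-variety theory to re-prove something that falls out for free once $\overline R$ is in hand. The moral is that after identifying an effective divisor representing the candidate nef class, the component-wise intersection test is the efficient way to finish; your parts (2) and (3), including the explicit Zariski decomposition $L_t=(9-6t)\bigl(\pi^*\theta-\tfrac43E\bigr)+\tfrac{3t-4}{2}\overline R$ and the application of Proposition~\ref{prop:NOsurface}, match the paper's intent.

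One small typo: your $\delta$-invariant bound should read $(\gamma^2)=2p_a(\gamma)-2\geq 2\binom m2+2g(\gamma)-2$ (with the $-2$), which together with $g(\gamma)\geq2$ gives the $(\gamma^2)\geq m^2-m+2$ you actually use.
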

\begin{proof}
The doubling map $J_1\to J_2$ sends $C\subset J_1$ to $R$.  Here $J_i={\rm Pic}^i(C)$. The restriction of the doubling map to $C$ is birational and sends only the 6 Weierstrass points to the point $o$ corresponding to the hyperelliptic pencil. Using that the doubling map is \' etale and the previous observation, $\text{mult}_oR=6$. We have that $C\equiv\theta$ on $J_1$. The pushforward of $C$ is then represented by $R$ and one computes that it has class $4\theta$. The claim on the class of $\overline R$ follows.

The effective cycle $\frac 14\overline R+\frac 16E\equiv\pi^*\theta-\frac 43E$ has 0 intersection with $\overline R$ and positive intersection with $E$. Part (1) follows.
Part (2) follows easily from (1).

For (3) choose a point $x'\in E$ different from the 6 points $\overline R\cap E$. For $0<t<\frac 32$, the curve $E$ is not in ${\bf B}_+(L_t)$. Apply Proposition \ref{prop:NOsurface}.
\end{proof}

\section{Product type threefolds}

We use birational Fujita--Zariski decompositions and Proposition \ref{prop:iNOthreefold} to compute infinitesimal Newton--Okounkov bodies on three examples of threefolds that are products between a curve and a surface.

\subsection{Curve times projective plane}
Let $C$ be a smooth projective curve. Let $L_1$ be a line bundle on $C$ of degree $a>0$. 
Set $X\deq C\times\bb P^2$ polarized by $L\deq L_1\boxtimes\cal O_{\bb P^2}(b)$ where $b>0$. Consider $x=(c_0,p)\in X$ an arbitrary point.
We have the following divisors on $X$:
\begin{itemize}
	\item $f$ the fiber of the first projection over $c_0$, and 
	\item $H$ the preimage of line in $\bb P^2$ through $p$.
\end{itemize}
The classes of $f$ and $H$ generate $N^1(X)_{\bb R}$. Furthermore they generate both the nef and the pseudoeffective cones of $X$.
Consider also the curves 
\begin{itemize} 
\item $C_x\deq H\cap H'$, where $H'$ is defined analogously to $H$ using a different line through $p$. Note that $C_x$ is the preimage of $p$ by the second projection.
\item $\bb P^1_x\deq f\cap H$. 
\end{itemize} 
These curves are a dual basis for $f,H$. They span both the Mori cone and the movable cone of curves on $X$.

Let $\pi:\overline X\to X$ be the blow-up of $x$ with exceptional divisor $E$. Let $\ell$ be a line in $E$. Denote by $\overline f$, etc. the strict transforms in $\overline X$. Note that $\overline f$ is the blow-up of $\bb P^2$ in $p$, while $\overline H$ is the blow-up of $C\times\bb P^1$ in $(c_0,p)$.

\begin{proposition}\label{prop:blCxP2positive}
	With notation as above,
	\begin{enumerate}[(1)]
		\item $\Eff(\overline X)=\langle\overline f,\ \overline H,\ E\rangle$. In particular $\mu(L;x)=a+b$.
		\smallskip
		
		\item $\Mov(\overline X)=\langle \pi^*f,\ \pi^*H,\ \overline H\rangle$.  In particular $\nu(L;x)=b$.
		\smallskip
		
		\item $\Nef(\overline X)=\langle \pi^*f,\ \pi^*H,\ \overline f+\overline H+E\rangle$. In particular $\epsilon(L;x)=\min\{a,b\}$. 
		\smallskip
		
		\item For $t\in[0,a+b)$, we have $P_{\sigma}(L_t)=L_t-(t-b)_+\cdot\overline f=\min\{a,a+b-t\}\cdot\overline f+b\overline H+(a+b-t)E$.
	\end{enumerate}
\end{proposition}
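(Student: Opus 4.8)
The plan is to compute the three cones by hand using the negative curves on $\overline X$, then deduce the Zariski decomposition of $L_t$ from them. First I would record the intersection-theoretic data: on $\overline X$ the strict transforms $\overline f$ and $\overline H$ are the only ``obviously negative'' divisors, with $\overline f = \pi^*f - E$ and $\overline H = \pi^*H - E$, and the classes $\pi^*f,\pi^*H,E$ form a basis of $N^1(\overline X)_{\mathbb R}$ with $\pi^*f\cdot\pi^*f = 0$, $\pi^*f\cdot\pi^*H$ the class of $\mathbb P^1_x$, $(\pi^*H)^2$ the class of $C_x$, and $E^2$ the standard exceptional data. I would also note the surfaces $\overline f$ (a one-point blow-up of $\mathbb P^2$) and $\overline H$ (a one-point blow-up of $C\times\mathbb P^1$), because the nefness test of \S\ref{subsec:neftest} will be applied by pulling back to these two surfaces together with $E$.

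For part (1): the class $a\overline f + b\overline H = a\pi^*f + b\pi^*H - (a+b)E = L_{a+b}$ is effective, and since $\overline f,\overline H$ are disjoint negative surfaces (I would check $\overline f\cdot\overline H = 0$ as divisors by intersecting with a curve sweeping each out, or by restricting), $L_{a+b}$ is its own Nakayama negative part, hence not big; this gives $\mu(L;x)\le a+b$, and $L_t$ is big for $t<a+b$ since it is a positive combination of $\overline f,\overline H$ and the big class $\pi^*L$ up to the boundary. The containment $\Eff(\overline X)\subseteq\langle\overline f,\overline H,E\rangle$ follows because any effective class pairs nonnegatively against the movable curves that are dual to this spanning set, so I would identify the dual nef curve classes (strict transforms or pullbacks of $C_x$, $\mathbb P^1_x$, and a line $\ell$ in $E$) and check each generator sits on the boundary. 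For parts (2) and (3): $\Nef(\overline X) = \langle\pi^*f,\pi^*H,\overline f+\overline H+E\rangle$ — here $\pi^*f,\pi^*H$ are nef as pullbacks of nef classes, and $\overline f+\overline H+E = \pi^*(f+H)-E = L_{a+b-\min\{a,b\}}$ restricted appropriately; nefness of this last class I would verify via the test of \S\ref{subsec:neftest}, pulling back to $\overline f$, $\overline H$, and $E$ and checking each restriction is nef (on $E\simeq\mathbb P^2$ the restriction is $\mathcal O(1)$, on $\overline f$ and $\overline H$ it becomes an explicit effective nef class on a one-point blow-up). That $\epsilon(L;x)=\min\{a,b\}$ then reads off as the largest $t$ with $L_t = \pi^*L - tE$ inside this nef cone. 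For $\Mov(\overline X) = \langle\pi^*f,\pi^*H,\overline H\rangle$: the inclusion $\supseteq$ uses that $\pi^*f,\pi^*H$ are movable (even nef) and $\overline H$ is movable — since $\overline H$ is the blow-up of $C\times\mathbb P^1$ at a point, its normal bundle / the geometry of the second ruling shows $\overline H$ moves in a family covering $\overline X$, or alternatively $\overline H = \lim$ of a movable family; the inclusion $\subseteq$ follows from the necessary condition of \S\ref{subsec:movcond}, testing against $\overline f$: a class outside this cone has negative $\overline f$-coefficient, and pulling back to the resolution $\overline f$ of that prime divisor produces a non-pseudoeffective class. This gives $\nu(L;x) = b$ as the largest $t$ with $L_t$ movable.

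For part (4): fix $t\in[0,a+b)$ and set $\tau = a+b-t$. If $t\le b$ then $\tau\ge a$ and $L_t = \min\{a,\tau\}\overline f + b\overline H + \tau E = a\overline f + b\overline H + \tau E$, which lies in the nef cone computed in (3) (it is $a(\overline f+\overline H+E) + (b-a)\overline H\cdot(\ldots)$ — more precisely a nonnegative combination of the three nef generators when $t\le b$), so $P_\sigma(L_t) = L_t$ and the stated formula holds with $(t-b)_+ = 0$. If $b\le t < a+b$ then $(t-b)_+ = t-b$ and I would write $L_t = \bigl(a\overline f + b\overline H + \tau E\bigr)$ and split off $(t-b)\overline f$: the remaining class $(\tau)\overline f + b\overline H + \tau E$ — wait, one must be careful; the correct split is $L_t = \bigl(\min\{a,\tau\}\overline f + b\overline H + \tau E\bigr) + (t-b)\overline f$, where the first summand is checked to be nef via \S\ref{subsec:neftest} (it pairs to zero with $\overline f$, which is why $\overline f$ is forced into the negative part) and the second is effective; since the first summand is movable and the second effective, property (1) of \S\ref{subsec:Nakayama} gives $N_\sigma(L_t)\le (t-b)\overline f$, and the reverse inequality $\sigma_{\overline f}(L_t)\ge t-b$ comes from property (2) of \S\ref{subsec:Nakayama}: pulling back to the resolution of $\overline f$, the divisor $L_t - s\overline f$ ceases to be pseudoeffective exactly past $s = t-b$ because $\overline f|_{\overline f}$ is the negative class. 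I expect the main obstacle to be part (2), the movable cone: establishing that $\overline H$ is genuinely movable requires exhibiting (or invoking) a covering family of deformations of $\overline H$ on $\overline X$ — this is where the product structure $C\times\mathbb P^2$ and the explicit geometry of the blow-up of $C\times\mathbb P^1$ at a point must be used carefully, rather than a formal cone manipulation; everything else reduces to the nefness test and the two standard properties of $N_\sigma$ recalled in \S\ref{subsec:Nakayama}.
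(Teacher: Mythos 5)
Your overall plan — compute $\Eff$, $\Mov$, $\Nef$ explicitly and read off the Zariski decomposition — matches the paper's, and the cone descriptions you aim for are correct. But several steps in the sketch as written would not go through. The most serious is the claim that $\overline f$ and $\overline H$ are disjoint negative surfaces: they are not. The set $f\cap H = \{c_0\}\times\ell'$ is a whole $\bb P^1$ through $x$, not just $x$, so $\overline f\cap\overline H$ is the curve $\overline{\bb P}^1_x$. The surface-style argument ``$L_{a+b}=a\overline f+b\overline H$ is its own negative part because the $Y_i$ are disjoint negative'' therefore has no analogue here, and you cannot read off $\mu(L;x)\le a+b$ this way. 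The paper instead establishes $\Eff(\overline X)\subseteq\langle\overline f,\overline H,E\rangle$ directly: push forward an irreducible effective $\alpha=a\overline f+b\overline H+cE$ to $X$ to get $a,b\ge 0$, then restrict $\alpha$ to $\overline f\simeq{\rm Bl}_p\bb P^2$ (where it has class $bh+(c-a-b)E$) to get $c\ge a\ge 0$. Your alternative of pairing against the movable curve classes dual to $\langle\overline f,\overline H,E\rangle$ is also workable in principle, but the three dual movable curve classes are $\pi^*C_x$, $\pi^*\bb P^1_x$, and $\ell+\overline C_x+\overline{\bb P}^1_x$, not the Mori-cone generators $\overline C_x,\overline{\bb P}^1_x,\ell$ you list — and the third class's movability is not immediate, so without an independent argument this route is incomplete.

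Two further points. In part (2) the sentence ``a class outside this cone has negative $\overline f$-coefficient'' is false (for instance $\overline f$ itself is effective with positive $\overline f$-coefficient but not movable since $c<a$). You need two restriction tests from \S\ref{subsec:movcond}, not one: restricting a movable class to $E\simeq\bb P^2$ yields $a+b\ge c$, and restricting to $\overline f$ yields $c\ge a$; together with $a\ge 0$ from pseudoeffectivity these inequalities cut out the claimed cone. For part (4), the summand $\min\{a,\tau\}\overline f+b\overline H+\tau E$ is not nef for $a<b$ and $a<t<a+b$ — it has $(\tau\pi^*f+b\overline H)\cdot\overline C_x=\tau-b<0$ in that range. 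It is only movable, which fortunately is exactly what you invoke afterwards via property (1) of \S\ref{subsec:Nakayama}, so the conclusion survives, but the nefness claim itself should be dropped. The paper's version of part (4) is also slightly cleaner: it first shows $N_\sigma(L_t)=m_t\overline f$ by ruling out $\overline H$ (movable) and $E$ (not in ${\bf B}_+(L_t)$ since $L$ is ample) as components, then determines $m_t$ by restricting $P_\sigma(L_t)$ to $\overline f$ and invoking maximality of the positive part, avoiding the need for an a priori split.
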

\begin{proof} 
Clearly $N^1(\overline X)_{\bb R}$ is generated by $\pi^*f$, $\pi^*H$, and $E$.

(1). Let $\alpha=a\overline f+b\overline H+cE$ be the class of an irreducible effective divisor on $\overline X$ distinct from $\overline f$, $\overline H$, and $E$. It is sufficient to prove that $a,b,c\geq 0$. Its pushforward to $X$ is also an irreducible effective divisor, which gives $a,b\geq 0$.
Its restriction to $\overline f\simeq{\rm Bl}_p\bb P^2$ has class $bh+(c-a-b)E$ where $h$ is the class of the pullback of a line in $\bb P^2$ and by abuse $E$ is the exceptional line over $p$. This is effective by our assumptions, which implies $c\geq a\geq 0$.

(2). The pullback of a nef divisor class is again nef, and in particular movable. Furthermore $\overline H\equiv\overline H'$. This implies that $\overline H$ is movable. The claimed generators are indeed movable. Conversely, let $\alpha=a\overline f+b\overline H+cE$ be a movable divisor class. It is in particular pseudoeffective, thus $a,b,c\geq 0$. Its restriction to any divisor must also be pseudoeffective. 
The restriction to $E=\bb P^2$ has degree $a+b-c$, thus $a+b\geq c$.
The restriction to $\overline f$, as in $(1)$ gives the condition $c\geq a$.
The cone cut out by the conditions $a+b\geq c\geq a\geq 0$ is the cone in the conclusion. 

(3). The dual of the cone of curves $\langle\overline C_x,\ \overline{\bb P}^1_x,\ \ell\rangle$ is generated by  $\pi^*f$, $\pi^*H$, and $\overline f+\overline H+E$ as one checks directly. It is then sufficient to prove that these three claimed generators are nef. This is clear for the first two. For the last one, which can be written as $\overline f+\pi^*H$, it is sufficient to prove that its restriction to $\overline f$ is nef. This restriction is $h-E$, which is indeed nef. 

Dually, the Mori cone of curves is generated by $\overline C_x$, $\overline{\bb P}^1_x$, and $\ell$. Thus a class $\alpha$ as above is nef if and only if $a+b\geq c\geq\max\{a,b\}$.

(4). Negative parts (e.g., $N_{\sigma}(L_t)$) cannot contain movable components like $\overline H$. Since $L$ is ample, then $x\not\in{\bf B}_+(L)$ and then $E$ is not a component of any ${\bf B}_+(L_t)$ while $L_t$ is big, in particular it is also not in the support of $N_{\sigma}(L_t)$. We deduce that $N_{\sigma}(L_t)=m_t\cdot\overline f$ for some $m_t\geq 0$. 
Then $(P_{\sigma}(L_t))|_{\overline f}=(L_t-m_t\cdot\overline f)|_{\overline f}\equiv b(h-E)+(b-t+m_t)E$ is pseudoeffective which implies $b-t+m_t\geq 0$, i.e., $m_t\geq(t-b)_+$.

It follows easily from (2) that the claimed formula for the positive part is indeed movable. The maximality of the positive part $P_{\sigma}(L_t)$ among movable divisors dominated by $L_t$ implies the reverse inequality $a-m_t\geq\min\{a,a+b-t\}$.  
\end{proof} 

The only restriction to the nefness of $P_{\sigma}(L_t)$ is the curve $\overline C_x$, and this only when $a-t<(b-t)_+$, i.e., $a<\max\{t,b\}$. We will show that birational Fujta--Zariski approximations exist after blowing-up $\overline C_x$. Note that $\overline C_x=\overline H\cap\overline H'$. Furthermore $(\overline H\cdot \overline C_x)=-1$. In particular the normal bundle of $\overline C_x$ in $\overline X$ is split and semistable. Let 
\[\rho:\widetilde X\to\overline X\]
be the blow-up of $\overline C_x$ with exceptional divisor $G$. Then $G\simeq C\times\bb P^1$ and $-G|_G=\cal O_C(c_0)\boxtimes\cal O_{\bb P^1}(1)$ has degree type $(1,1)$. Note that $\Nef(G)=\Eff(G)$ corresponding to the nonnegative degree types. Furthermore the strict transforms $\widetilde f$ and $\widetilde H$ are isomorphic to $\overline f$ and $\overline H$ respectively, while numerically $\widetilde f\equiv\rho^*\overline f$ and $\widetilde H\equiv\rho^*\overline H-G$. The strict transform $\widetilde E$ is the blow-up of $\bb P^2$ in one point, and numerically $\widetilde E\equiv\rho^*E$.

\begin{proposition}\label{prop:CxP2fujita}
	With the notation above, for all $t\in[0,a+b)$ we have
	\[P_{\sigma}(\rho^*L_t)=\rho^*P_{\sigma}(L_t)-(t-a)_+\cdot G\ .\]
	Furthermore this is nef, in particular we have birational Fujita--Zariski decompositions for all $L_t$ on $\widetilde X$. Consequently
	\[\vol(L_t)=3\min\{a,a+b-t\}\cdot b^2-\min\{t,b\}^3+(t-a)_+^3\ .\]
\end{proposition}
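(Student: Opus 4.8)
The plan is to verify the stated formula for $P_{\sigma}(\rho^*L_t)$ by the two-sided argument that was already used in Proposition \ref{prop:blCxP2positive}: show the candidate class $Q_t\deq\rho^*P_{\sigma}(L_t)-(t-a)_+G$ is movable (indeed nef, which is stronger) and dominated by $\rho^*L_t$ with the difference effective and supported on exceptional divisors; then argue maximality forces equality. First I would record the numerical dictionary already assembled just before the statement: $\widetilde f\equiv\rho^*\overline f$, $\widetilde H\equiv\rho^*\overline H-G$, $\widetilde E\equiv\rho^*E$, and $-G|_G=\cal O_C(c_0)\boxtimes\cal O_{\bb P^1}(1)$ has bidegree $(1,1)$ on $G\simeq C\times\bb P^1$, with $\Nef(G)=\Eff(G)$ the nonnegative-bidegree cone. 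Also $(\overline H\cdot\overline C_x)=-1$ and $\overline f\cap\overline C_x=\varnothing$ (since $\overline f$ and $\overline H$ already meet transversally along $\bb P^1_x\neq C_x$), which controls how $\rho^*$ of the generators restrict to $G$.

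The first substantive step is to check that $Q_t$ is nef. Using Proposition \ref{prop:blCxP2positive}(4), $\rho^*P_{\sigma}(L_t)=\min\{a,a+b-t\}\rho^*\overline f+b\,\rho^*\overline H+(a+b-t)\rho^*E$; rewriting $b\,\rho^*\overline H=b\,\widetilde H+bG$ and subtracting $(t-a)_+G$ gives $Q_t$ as a nonnegative combination of $\widetilde f$, $\widetilde H$, $\widetilde E$ and $G$ precisely when $b-(t-a)_+\geq 0$, which fails for $t$ near $a+b$, so I instead group $Q_t$ as $\rho^*(P_{\sigma}(L_t))$ minus a multiple of $G$ and use the nefness test of \S\ref{subsec:neftest}: pull $Q_t$ back to (smooth models of) each prime divisor it could be nonnegative against, namely $\widetilde f$, $\widetilde H$, $\widetilde E$, and $G$. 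On $\widetilde f\simeq\overline f$ nothing changes from the surface computation in Proposition \ref{prop:blCxP2positive} since $G$ is disjoint from $\widetilde f$. On $\widetilde E$, again $G|_{\widetilde E}=0$, so the restriction is that of $\rho^*P_{\sigma}(L_t)$, which is nef there. The content is the restriction to $G\simeq C\times\bb P^1$: one computes $\rho^*\overline H|_G\equiv -G|_G\cdot(\text{something})$ — more precisely $\widetilde H|_G$ is the class of the fiber direction — and $-G|_G$ has bidegree $(1,1)$, so $Q_t|_G$ works out to bidegree $\bigl(\min\{a,a+b-t\}\cdot 0+\dots\bigr)$; the upshot is that the coefficient of $G$ that makes $Q_t|_G$ land in $\Nef(G)$ is exactly $(t-a)_+$, which is why this is the right correction term. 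Simultaneously, $Q_t$ being nef and the difference $\rho^*L_t-Q_t=\rho^*(L_t-P_\sigma(L_t))+(t-a)_+G=(t-b)_+\rho^*\overline f+(t-a)_+G$ being effective (by property (1) of \S\ref{subsec:Nakayama}, or directly since $\overline f$ and $G$ are the exceptional/negative divisors) shows $\rho^*L_t - Q_t$ dominates $N_\sigma(\rho^*L_t)$, so $P_\sigma(\rho^*L_t)\geq Q_t$; conversely $Q_t$ is nef hence movable and $\leq \rho^*L_t$ with the difference effective, and property (3) of \S\ref{subsec:Nakayama} (the pushforward of $P_\sigma(\rho^*L_t)$ is $P_\sigma(L_t)$, and $Q_t$ already pushes forward to $P_\sigma(L_t)$) pins down equality.

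Finally the volume: since $P_{\sigma}(\rho^*L_t)$ is nef, $\vol(L_t)=\vol(\rho^*L_t)=(P_{\sigma}(\rho^*L_t)^3)$. I would expand $(Q_t^3)$ using the intersection numbers of $\widetilde f,\widetilde H,\widetilde E,G$ on $\widetilde X$ — equivalently expand $\bigl(\rho^*P_\sigma(L_t)-(t-a)_+G\bigr)^3=(P_\sigma(L_t)^3)-3(t-a)_+^2(\rho^*P_\sigma(L_t)\cdot G^2)-\dots$, using $(\rho^*(\text{anything})\cdot G\cdot G)$ and $(G^3)$ computed from $-G|_G$ of bidegree $(1,1)$ on $C\times\bb P^1$ — and collect terms. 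The self-intersection $(P_\sigma(L_t)^3)$ on $\overline X$ gives the $3\min\{a,a+b-t\}b^2$ term (with the $-\min\{t,b\}^3$ coming from the $E$-contribution, exactly as the surface-style bookkeeping predicts, and consistent with $L_t^3=3ab^2$ at $t=0$), and the $G$-correction contributes $+(t-a)_+^3$; matching the case splits at $t=a$ and $t=b$ against the three regimes of $\min$ and $(\cdot)_+$ finishes it. The main obstacle I anticipate is purely the $G$-restriction bookkeeping: getting the bidegrees of $\widetilde f|_G$, $\widetilde H|_G$, $\widetilde E|_G$ and $G|_G$ right on $C\times\bb P^1$ and thereby confirming that $(t-a)_+$ — and no smaller coefficient — clears the obstruction, while everything else is a formal consequence of Proposition \ref{prop:blCxP2positive}, the nefness test, and the defining properties of $P_\sigma$.
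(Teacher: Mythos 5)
Your overall strategy is the paper's: show the candidate class $Q_t=\rho^*P_{\sigma}(L_t)-(t-a)_+G$ is nef (hence movable), use the $G$-restriction to see that $(t-a)_+$ is the minimal possible coefficient, and then conclude by maximality of the positive part and push-forward. However, the execution has two genuine errors and one significant omission.

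First, the claim that the rewriting $b\,\rho^*\overline H=b\,\widetilde H+bG$ ``gives $Q_t$ as a nonnegative combination of $\widetilde f$, $\widetilde H$, $\widetilde E$, $G$ precisely when $b-(t-a)_+\geq 0$, which fails for $t$ near $a+b$'' is false: $b-(t-a)_+=\min\{b,a+b-t\}$, which stays nonnegative for all $t\in[0,a+b)$. This is harmless to your plan, but it signals a miscomputation.

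Second, and more seriously, your assertion $G|_{\widetilde E}=0$ is wrong. The center $\overline C_x$ meets $E$ in one point, so $G$ meets $\widetilde E$ along the exceptional line of the point blow-up $\widetilde E\to E$; numerically $\widetilde E=\rho^*E$ holds, but $G\cap\widetilde E\neq\varnothing$. Consequently $Q_t|_{\widetilde E}$ is \emph{not} $\rho^*P_{\sigma}(L_t)|_{\widetilde E}$; it is $\min\{t,b\}\rho^*h-(t-a)_+\,e$ on the one-point blow-up of $\bb P^2$, and its nefness genuinely uses the choice of coefficient $(t-a)_+$. The paper handles this by observing that the two test curves of $\widetilde E$ also live in $G$ and $\widetilde H$, so the $\widetilde E$-check follows from those; your argument, as stated, would let an unchecked negativity slip through.

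Finally, you list $\widetilde H$ among the divisors to test but never address it, while treating the $G$-restriction as the only real ``content.'' In fact the $G$-restriction only pins down the minimal coefficient making $Q_t|_G$ pseudoeffective; the nontrivial nefness check is the restriction to $\widetilde H\simeq\overline H$, the blow-up of $C\times\bb P^1$ at a point, where one must write $Q_t|_{\widetilde H}$ as $\min\{a,a+b-t\}\pi^*f_1+\min\{b,a+b-t\}\pi^*f_2-\min\{t,a,b,a+b-t\}E$ and verify nonnegative intersection with $\overline f_1$, $\overline f_2$, $E$. Without that computation, nefness of $Q_t$ — and hence the proposition — is not established.

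The volume computation at the end is sketched correctly in spirit: expand $(Q_t^3)=(\rho^*P_{\sigma}(L_t)-(t-a)_+G)^3$ using $(G^3)=2$ and $(\rho^*P_{\sigma}(L_t)\cdot G^2)=-(L_t\cdot\overline C_x)=t-a$, exactly as the paper does.
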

\begin{proof}
	Since $P_{\sigma}(L_t)$ is movable, the negative part $N_{\sigma}(\rho^*P_{\sigma}(L_t))$ is supported on the exceptional divisor $G$ of $\rho$. Let $N_{\sigma}(\rho^*P_{\sigma}(L_t))=uG$. Then $(\rho^*(P_{\sigma}(L_t))-uG)|_G$ is pseudoeffective. Its degree type on $C\times\bb P^1$ is $(a-t+u,u)$. This gives $u\geq(t-a)_+$.
	To prove equality it is sufficient to prove that $\rho^*P_{\sigma}(L_t)-(t-a)_+G$ is movable. In fact we prove that it is nef.
	
	We have $\rho^*P_{\sigma}(L_t)-(t-a)_+\cdot G=\min\{a,a+b-t\}\cdot \widetilde f+b\widetilde H+(a+b-t)\widetilde E+(b-(t-a)_+)\cdot G=\min\{a,a+b-t\}\cdot \widetilde f+b\widetilde H+(a+b-t)\widetilde E+\min\{b,a+b-t\}\cdot G=\min\{a,a+b-t\}\cdot \rho^*\pi^*f+b\widetilde H+(b-t)_+\cdot \widetilde E+\min\{b,a+b-t\}\cdot G$. The class $f$ is nef on $X$. It is enough to check that the restrictions to the components $\widetilde H$, $\widetilde E$, $G$ are nef. 
	
	Given that on $G$ nefness and pseudoeffectivity are equivalent, we have already checked the restriction to $G$. On $\widetilde E$, which is the blow-up of $\bb P^2$ in a point, there are two test curves for nefness: the exceptional line and the strict transform of a line in $\bb P^2$ through the point that we blow-up. These are also contained in $G$ and $\widetilde H$. It is thus sufficient to prove that the restriction to $\widetilde H$ is nef. Recall that $\widetilde H\simeq\overline H$ is the blow-up of of $C\times\bb P^1$ in a point. By some abuse, denote by $\pi$ the blow-up map and by $E$ its exceptional divisor. Denote also by $f_1,f_2$ the classes of fibers of the two projections on $C\times\bb P^1$, and by $\overline f_1,\overline f_2$ their strict transforms in $\overline H$. The restriction class we are interested in works out to 
	\[\min\{a,a+b-t\}\cdot \pi^*f_1+\min\{b,a+b-t\}\cdot\pi^*f_2-\min\{t,a,b,a+b-t\}\cdot E\ .\]
	Its nefness is tested by intersecting with $\overline f_1,\overline f_2,E$. These intersections are $\min\{b,a+b-t\}-\min\{t,a,b,a+b-t\}$, $\min\{a,a+b-t\}-\min\{t,a,b,a+b-t\}$, and respectively $\min\{t,a,b,a+b-t\}$. These are clearly nonnegative. 	
	
	For the volume computation, we have $\vol(L_t)=\vol(\rho^*L_t)=\vol(P_{\sigma}(\rho^*L_t))=(P_{\sigma}(\rho^*L_t)^3)$ since the positive part is nef. For the computation of the intersection number, write $P_{\sigma}(L_t)=\min\{a,a+b-t\}\cdot\pi^*f+b\pi^*H-\min\{t,b\}E$ and use that  $(P_{\sigma}(\rho^*L_t)^3)=((\rho^*P_{\sigma}(L_t)-(t-a)_+\cdot G)^3)=(\rho^*P_{\sigma}(L_t)^3)+3(t-a)_+^2\cdot(\rho^*P_{\sigma}(L_t)\cdot G^2)-(t-a)_+^3(G^3)$. Intersections of form $(\rho^*\beta\cdot G)$ are 0 whenever $\beta$ is a curve class (like $(P_{\sigma}(L_t)^2)$) because $G$ is the blow-up of a locus of codimension at least $2$. Similarly intersections of form $(\pi^*\gamma\cdot E)$ on $\overline X$ are 0 whenever $\gamma$ is a class of positive codimension. Since $-G|_G$ is a divisor class of degree type $(1,1)$ on $G\simeq C\times\bb P^1$, we have $(\rho^*P_{\sigma}(L_t)\cdot G^2)=-(L_t\cdot\overline C_x)=t-a$ and $(G^3)=2$. The computation follows easily. 
\end{proof}

\begin{theorem}\label{thm:CP}Let $C$ be a smooth projective curve and let $L_1$ be a line bundle of degree $a>0$ on $C$. Let $b>0$ and $L=L_1\boxtimes\cal O_{\bb P^2}(b)$ on $X=C\times\bb P^2$. Let $x\in X$ be an arbitrary point. Then
	\begin{enumerate}[(1)]
		\item $\inob{x}{L}$ is the convex hull of the following sets of vertices
		\[\begin{cases}(0,0,0),\ (a+b,0,0),\ (a,b,0),\ (b,b,0),\ (b,0,b),\ (a+b,0,b)&\text{ , if }a\geq b\\
			(0,0,0),\ (2a,0,0),\ (a,a,0),\ (a,0,a),\ (2a,0,a)&\text{, if }a=b\\
		(0,0,0),\ (a+b,0,0),\ (b,a,0),\ (a,a,0),\ (b,0,b),\ (a+b,0,b),\ (b,a,b-a)	&\text{, if }a\leq b\end{cases}\]
	See Figures \ref{fig:CP32}, \ref{fig:CP11}, and \ref{fig:CP23}.
	\smallskip
	
		\item Let $a=1-s$ and $b=s$ for $0<s<1$. Also denote by $L_s$ the corresponding box-product divisor of degree type $(1-s,s)$. Then in $\bb R^4_+$, the bodies $\{s\}\times\inob{x}{L_s}$ glue to the convex hull of the 6 vertices $(0,0,0,0)$, $(1,0,0,0)$, $(0,1,0,0)$, $(1,1,0,0)$, $(1,1,0,1)$, $(\frac 12,\frac 12,\frac 12,0)$.
	\end{enumerate}
\end{theorem}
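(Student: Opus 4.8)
The plan is to apply Proposition~\ref{prop:iNOthreefold} to the birational model $\rho\colon\widetilde X\to\overline X$ built in Proposition~\ref{prop:CxP2fujita}. Hypothesis~(1) there --- that $P_{\sigma}(\rho^*L_t)$ be nef for all $0\leq t<\mu=\mu(L;x)$, with $\mu=a+b$ by Proposition~\ref{prop:blCxP2positive}(1) --- is precisely Proposition~\ref{prop:CxP2fujita}, and $x\notin{\bf B}_-(L)$ because $L$ is ample. For hypothesis~(2) I restrict $P_{\sigma}(\rho^*L_t)=\rho^*P_{\sigma}(L_t)-(t-a)_+\,G$ to the strict transform $\widetilde E$, which is the one-point blow-up $\Bl_q\PP^2$ of $E\simeq\PP^2$ at $q\deq\overline C_x\cap E$. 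Let $h_0$ be the line class on $E$, put $h\deq(\rho|_{\widetilde E})^*h_0$, and let $e$ be the exceptional $(-1)$-curve of $\Bl_q\PP^2$. Then $G|_{\widetilde E}=e$, $\ -\rho^*E|_{\widetilde E}=h$ (because $E|_E=-h_0$), and $\overline f|_E=\overline H|_E=h_0$, these being the tangent-direction loci in $E=\PP(T_xX)$ of the smooth divisors $f$ and $H$ through $x$; hence by Proposition~\ref{prop:blCxP2positive}(4),
\[P_{\sigma}(L_t)|_E\equiv\min\{t,b\}\,h_0,\qquad\text{whence}\qquad P_{\sigma}(\rho^*L_t)\big|_{\widetilde E}\ \equiv\ D_t\ \deq\ \min\{t,b\}\,h-(t-a)_+\,e.\]
Consequently $(P_{\sigma}(\rho^*L_t)^2\cdot\widetilde E)=(D_t^2)=\min\{t,b\}^2-(t-a)_+^2$, which one checks is positive for $0<t<a+b$; this is hypothesis~(2).

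Next I carry out the surface computation required by Proposition~\ref{prop:iNOthreefold} on $\Bl_q\PP^2$, whose nef cone is $\langle h,\,h-e\rangle$ and whose pseudoeffective cone is $\langle e,\,h-e\rangle$. For fixed $t$ and $u\geq 0$,
\[\bigl(P_{\sigma}(\rho^*L_t)+u\,\rho^*E\bigr)\big|_{\widetilde E}\ \equiv\ D_t-u\,h\ =\ (t-a)_+(h-e)\ +\ \bigl(\min\{t,b\}-(t-a)_+-u\bigr)\,h,\]
so this class is nef exactly when $u\leq g(t)\deq\min\{t,b\}-(t-a)_+$, and big exactly when $u<g(t)$. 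Thus the supremum over the middle coordinate in Proposition~\ref{prop:iNOthreefold} equals $g(t)$; and since $D_t-u\,h$ is already nef throughout $0\leq u\leq g(t)$, its positive part equals itself and the last coordinate is bounded by $\bigl((D_t-u\,h)\cdot h\bigr)=\min\{t,b\}-u$. Using $g(t)=\min\{\,t,\ \min\{a,b\},\ a+b-t\,\}$, this yields
\[\inob{x}{L}\ =\ \bigl\{(t,u,v)\in\bb R^3_+\ :\ u+v\leq b,\ \ u\leq\min\{a,b\},\ \ u+v\leq t\leq a+b-u\bigr\},\]
a polytope, where $(t,u,v)$ are the coordinates on $\bb R^3$ with $t=\nu_1$.

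It remains to read off vertices. For part~(1) I enumerate the vertices of this polytope by intersecting triples of its facet hyperplanes and keeping the solutions that satisfy the remaining inequalities, treating $a>b$ (where $u\leq\min\{a,b\}$ is implied by $u+v\leq b$) and $a<b$ (where it becomes an essential facet) separately, the case $a=b$ being the common degeneration; this reproduces the three displayed vertex lists (six vertices for $a>b$, seven for $a<b$, five for $a=b$). For part~(2), I substitute $a=1-s$, $b=s$ with $0<s<1$, so that $a+b=1$ and $\min\{a,b\}=\min\{s,1-s\}$; since $u\leq s$ already follows from $u+v\leq s=b$, the description above shows that (the closure of) $\bigcup_{0<s<1}\{s\}\times\inob{x}{L_s}$ is the polytope
\[\bigl\{(s,t,u,v)\in\bb R^4_+\ :\ u+v\leq s\leq 1-u,\ \ u+v\leq t\leq 1-u\bigr\},\]
which is in particular convex; a short vertex computation, organized around the symmetry $s\leftrightarrow t$, produces exactly the six vertices $(0,0,0,0)$, $(1,0,0,0)$, $(0,1,0,0)$, $(1,1,0,0)$, $(1,1,0,1)$ and $(\tfrac12,\tfrac12,\tfrac12,0)$, the last of which is where five of the six facets meet.

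I expect the main difficulty to be organizational rather than conceptual: one must get the blow-up geometry of $E=\PP(T_xX)$ right in order to identify $G|_{\widetilde E}$, $\rho^*E|_{\widetilde E}$ and $P_{\sigma}(L_t)|_E$, and then track the nested piecewise-linear functions $\min\{t,b\}$, $(t-a)_+$ and $g(t)$ across the three regimes of $t$ and the two cases $a>b$, $a<b$. The one genuinely substantive point is that $D_t-u\,h$ is \emph{nef}, not merely big, for all $0\leq u\leq g(t)$: this is what keeps the bound on the last coordinate equal to the elementary piecewise-linear function $\min\{t,b\}-u$ with no Zariski-type correction term, hence the body a polytope, and it uses crucially that $\Bl_q\PP^2$ has only one negative curve --- precisely the feature that fails on the more complicated rational surfaces $\widetilde E$ appearing in Theorems~\ref{thm:CCCfinalform} and~\ref{thm:JxCfinalform}.
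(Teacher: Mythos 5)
Your proposal is correct and is essentially the same proof as the paper's: Proposition~\ref{prop:iNOthreefold} is applied to the same model $\rho$ from Proposition~\ref{prop:CxP2fujita}, the restriction $P_{\sigma}(\rho^*L_t)|_{\widetilde E}=\min\{t,b\}\,h-(t-a)_+\,e$ on $\Bl_q\bb P^2$ is computed the same way, and the resulting Zariski decompositions yield the same slice data $(t,x)\mapsto\min\{t,b\}-x$ over the same $(t,x)$-region. The only differences are presentational: you phrase the body by halfspaces rather than as a graph over a trapezoid, and you enumerate vertices of the $4$-dimensional polytope in part~(2) by hand where the paper invokes Polymake.
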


\begin{figure}
	\begin{minipage}{.33\textwidth}
		\centering
		\includegraphics[width=.9\linewidth]{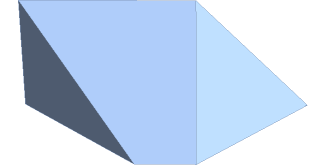}
		\captionof{figure}{$(a,b)=(3,2)$}
		\label{fig:CP32}
	\end{minipage}%
	\begin{minipage}{.33\textwidth}
		\centering
		\includegraphics[width=1.1\linewidth]{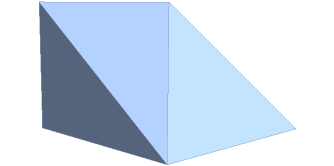}
		\captionof{figure}{$a=b$}
		\label{fig:CP11}
	\end{minipage}
\begin{minipage}{.33\textwidth}
	\centering
	\includegraphics[width=.9\linewidth]{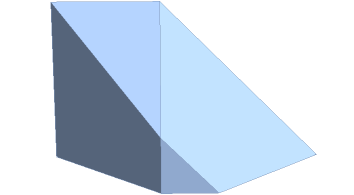}
	\captionof{figure}{$(a,b)=(2,3)$}
	\label{fig:CP23}
\end{minipage}
\end{figure}

\begin{proof}
(1).	The plan is to apply Proposition \ref{prop:iNOthreefold}. Hypothesis (1) holds due Proposition \ref{prop:CxP2fujita}. For (2) we want to check that if $0<t<a+b$, then $(P_{\sigma}(\rho^*L_t)^2\cdot\widetilde E)>0$. Recall that $\widetilde E$ is the blow-up of $\bb P^2$ in a point. Then $P_{\sigma}(\rho^*L_t)|_{\widetilde E}$ is numerically $\min\{t,b\}\cdot h-(t-a)_+\cdot E$ with the notation in the proof of Proposition \ref{prop:blCxP2positive}.(1). The assumption $0<t<a+b$ implies that this class is nef and big.
	
	From Proposition \ref{prop:iNOthreefold} we conclude that $\inob{x}{L}$ is determined by the Zariski decompositions of $(P_{\sigma}(\rho^*L_t)+x\rho^*E)|_{\widetilde E}$. The restriction is 
	\[(\min\{t,b\}-x)\cdot h-(t-a)_+\cdot E\ .\]
	This is already nef for $x\in[0,\min\{t,b\}-(t-a)_+]=[0,\min\{t,a,b,a+b-t\}]$ and not pseudoeffective when $x>\min\{t,a,b,a+b-t\}$. When $x$ is in the nef interval above, then \[(P_{\sigma}((P_{\sigma}(\rho^*L_t)+x\rho^*E)|_{\widetilde E})\cdot(-\rho^*E|_{\widetilde E}))=\min\{t,b\}-x\ .\]
	Thus $\inob{x}{L}$ is the graph of $(t,x)\mapsto\min\{t,b\}-x$ over $0\leq t\leq a+b$ and $0\leq x\leq\min\{t,a,b,a+b-t\}$.
	
	The coordinates $(t,x)$ above range in the isosceles trapezoid with vertices $(0,0)$, $(a+b,0)$, $(\max\{a,b\},\min\{a,b\})$, $(\min\{a,b\},\min\{a,b\})$. Over it, the function $\min\{t,b\}-x$ has two branches that split the trapezoid into the regions $t\leq b$ where $\inob{x}{L}$ in $(t,x,y)$ coordinates is the volume below the graph of $y=t-x$, and the region $t\geq b$ where the body is the volume below the graph of $y=b-x$. To find the vertices of the body, we plug in the vertices of these two regions of the trapezoid into the corresponding branch for $y$.
	
(2). We used \cite{Polymake} to find the vertices of the polytope cut out by the inequalities above.	
\end{proof}

\subsection{Products of curves}

\begin{theorem}\label{thm:CCCfinalform}
	Let $X=C_1\times C_2\times C_3$ be a product of smooth projective curves. Let $x=(c_1,c_2,c_3)\in X$. Let ${\rm pr}_i:X\to C_i$ denote the corresponding projection with fiber class $f_i$, and let $L=d_1f_1+d_2f_2+d_3f_3$ with $d_1\geq d_2\geq d_3>0$. Then
	\begin{enumerate}
		\item $\mu(L;x)=d_1+d_2+d_3$.
		\smallskip
		
		\item Let $\pi:\overline X\to X$ be the blow-up of $x$ with exceptional divisor $E$. 
		For $0\leq t\leq d_1+d_2+d_3$, put 
		\[L_t\deq\pi^*L-tE\quad\text{ and }\quad\tau\deq d_1+d_2+d_3-t\quad\text{ and }\quad n_i\deq\min\{d_i,\tau\}\ .\] Let $\overline f_i$ denote the strict transform of the fiber ${\rm pr}_i^{-1}\{c_i\}$. Then  
		\[P_\sigma(L_t)=\sum\nolimits_{i=1}^3n_i\overline f_i+\tau E.\]
		
		\item Let $\overline f_{ij}\coloneqq\overline f_i\cap\overline f_j$.  Let $\rho:\widetilde X\to\overline X$ denote the blow-up of the three curves $\overline f_{ij}$ with corresponding exceptional divisors $E_k\simeq C_k\times\bb P^1$.  Denote by $\widetilde f_i$ and $\widetilde E$ the strict transforms of $\overline f_i$ and $E$ respectively. 
		Then 
		\[P_{\sigma}(\rho^*L_t)=\sum\nolimits_{i=1}^3n_i\widetilde f_i+\sum\nolimits_{i=1}^3m_iE_i+\tau\widetilde E\]
		is a nef divisor class, where $m_i\deq \min\{n_j+n_k,\tau\}$. 		 In particular, when $d_1=d_2=d_3=1$, then
		 \[P_{\sigma}(\rho^*L_t)=\begin{cases}
		 	\sum_i\widetilde f_i+2\sum_iE_i+(3-t)\widetilde E&\text{, if }0\leq t\leq 1\\
		 	\sum_i\widetilde f_i+(3-t)\sum_iE_i+(3-t)\widetilde E&\text{, if }1\leq t\leq 2\\
		 	(3-t)(\sum_i\widetilde f_i+\sum_iE_i+\widetilde E)&\text{, if }2\leq t\leq 3
		 \end{cases}\]
	 \smallskip
	 
		\item $\vol(L_t)=6n_1n_2n_3-(\sum_in_i-\tau)^3-3\sum_in_i(n_j+n_k-m_i)^2+3(\sum_in_i-\tau)\sum_i(n_j+n_k-m_i)^2-2\sum_i(n_j+n_k-m_i)^3$. In particular, if $d_1=d_2=d_3=1$, then
		\[\vol(L_t)=\begin{cases}6-t^3&\text{, if }0\leq t\leq 1\\
			2t^3-9t^2+9t+3&\text{, if }1\leq t\leq 2\\
			(3-t)^3&\text{, if }2\leq t\leq 3
			\end{cases}
			\]
			\smallskip
			
		\item $\inob{x}{L}$ is the convex hull of the nine points $(0,0,0)$, $(d_3,d_3,0)$, $(d_2+d_3,0,d_2+d_3)$, $(d_1+d_2+d_3,0,0)$, $(d_1+d_3,0,d_2+d_3)$, $(d_1+d_2,0,2d_3)$, $(d_2,d_3,d_2-d_3)$, $(d_1,d_3,d_2-d_3)$, $(d_1+d_2-d_3,d_3,0)$.	In particular, if $d_1=d_2=d_3=1$, then 
		\[\inob{x}{L}=\textup{convex hull}\bigl((0,0,0),(3,0,0),(1,1,0),(2,0,2)\bigr).\]
	\end{enumerate}
\end{theorem}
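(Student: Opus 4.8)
The plan is to follow the template already used for $X=C\times\PP^2$ in Propositions \ref{prop:blCxP2positive}--\ref{prop:CxP2fujita} and Theorem \ref{thm:CP}, noting that everything happens inside the subcone of $N^1(X)_\RR$ spanned by $f_1,f_2,f_3$, so the genera are irrelevant. Working in $N^1(\overline X)_\RR$ with basis $\pi^*f_1,\pi^*f_2,\pi^*f_3,E$ and setting $\overline f_i=\pi^*f_i-E$, I would first record the intersection theory on $\overline X$ (all computed from $\overline f_i=\pi^*f_i-E$, $E|_E=\sO_{\PP^2}(-1)$, $\pi^*f_i|_{\overline f_i}=0$), and observe that each $\overline f_i\cong\Bl_{pt}(C_j\times C_k)$ is a blow-up of a product surface to which Lemma \ref{lem:twoelliptic} applies, and that $\overline f_i|_{\overline f_i}$ is (minus) a $(-1)$-curve. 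Exactly as in Proposition \ref{prop:blCxP2positive}, this gives $\Eff(\overline X)=\langle\overline f_1,\overline f_2,\overline f_3,E\rangle$ (push-forward to $X$ forces the $\overline f_i$-coefficients nonnegative; restricting to a $\overline f_i$ and pairing with its negative curves, via Lemma \ref{lem:twoelliptic}, forces the $E$-coefficient nonnegative), and similarly the movable and nef cones are cut out by the restriction-to-divisor and Mori-cone conditions. Part (1) is then immediate: $L_{d_1+d_2+d_3}=\sum d_i\overline f_i$ lies on the facet of $\Eff(\overline X)$ not containing $E$, so $L_t$ is not pseudoeffective for $t>d_1+d_2+d_3$. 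For part (2): since $L$ is ample, $x\notin\Bminus(L)$, so $\widetilde E$... rather $E$ is never a component of $N_\sigma(L_t)$ while $L_t$ is big, and movable classes cannot occur in a negative part, whence $N_\sigma(L_t)$ is supported on the $\overline f_i$; imposing that the complement restricts pseudoeffectively to each $\overline f_j$ (read off from Lemma \ref{lem:twoelliptic}) forces the coefficients to be exactly $n_i=\min\{d_i,\tau\}$, and the reverse inequality follows because $\sum n_i\overline f_i+\tau E$ is movable by the cone description and dominated by $L_t$.

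Part (3) is the heart of the argument and I expect it to be the main obstacle. The curves $\overline f_{ij}=\overline f_i\cap\overline f_j$ are pairwise disjoint (a common point would be the strict transform of $\{x\}$), each is a copy of $C_k$ whose normal bundle in $\overline X$ splits into two line bundles of degree $-1$ (computed as for $\overline C_x$ in Proposition \ref{prop:CxP2fujita}), so $E_k\cong C_k\times\PP^1$ with $-E_k|_{E_k}$ of degree type $(1,1)$; in particular on $E_k$ nefness and pseudoeffectivity coincide. Since $\rho^*\overline f_i=\widetilde f_i+E_j+E_k$ and $\rho^*E=\widetilde E$, pulling back the formula in (2) and subtracting the negative part (supported on the $E_k$ since $P_\sigma(L_t)$ is movable), the pseudoeffectivity of the restriction to $E_k$ forces the coefficient of $E_k$ down to $m_k=\min\{n_i+n_j,\tau\}$; the claim is that $\sum n_i\widetilde f_i+\sum m_kE_k+\tau\widetilde E$ is already nef. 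I would prove this by the test of \S\ref{subsec:neftest}: rewrite the class as a nonnegative combination of the nef pullbacks $\rho^*\pi^*f_i$ plus a nonnegative combination of $\widetilde f_i,E_k,\widetilde E$, and then check nefness of its restriction to each prime component. The restriction to $E_k$ is pseudoeffective (hence nef) by the degree-type computation on $C_k\times\PP^1$; the restriction to $\widetilde f_i\cong\Bl_{pt}(C_j\times C_k)$ is handled by Lemma \ref{lem:twoelliptic} (or a direct three-curve test); and the restriction to $\widetilde E=\Bl_{3\text{ pts}}\PP^2$ is then automatic because its relevant $(-1)$-curves --- the three exceptional curves and the three strict transforms of lines through pairs of blown-up points --- all lie in $\bigcup_k E_k\cup\bigcup_i\widetilde f_i$. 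The delicate point is keeping straight the combinatorics of the nested minima $n_i,m_k,\tau$ across the various ranges of $t$.

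Part (4) is then a routine, if lengthy, expansion of $\vol(L_t)=(P_\sigma(\rho^*L_t)^3)$ (legitimate since the positive part is nef): write $P_\sigma(\rho^*L_t)=\rho^*P_\sigma(L_t)-\sum_k(n_i+n_j-\tau)_+E_k$, use $(\rho^*P_\sigma(L_t))^3=(P_\sigma(L_t)^3)$ computed on $\overline X$ from the intersection numbers above, and the vanishings/values $(\rho^*\gamma\cdot E_k)=0$ for a curve class $\gamma$, $(\rho^*\beta\cdot E_k^2)=-(\beta\cdot\overline f_{ij})$, $(E_k^3)=2$, exactly as in Proposition \ref{prop:CxP2fujita}. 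For part (5) I would apply Proposition \ref{prop:iNOthreefold}: its hypothesis (1) is part (3), and hypothesis (2) holds because $P_\sigma(\rho^*L_t)|_{\widetilde E}$ works out (using $\widetilde f_i|_{\widetilde E}=h-e_j'-e_k'$, $E_k|_{\widetilde E}=e_k'$, $-\rho^*E|_{\widetilde E}=h$) to the class $(\sum_i n_i-\tau)h-\sum_k(n_i+n_j-\tau)_+e_k'$ on $\Bl_{3\text{ pts}}\PP^2$, which is nef and big for $0<t<\mu$. Proposition \ref{prop:iNOthreefold} then reduces $\inob{x}{L}$ to the Zariski decompositions on $\Bl_{3\text{ pts}}\PP^2$ of the shifted classes $P_\sigma(\rho^*L_t)|_{\widetilde E}-x\,h$, where the $\SS_3$-symmetry of the Cremona involution organizes the case analysis according to which of the three $(-1)$-curves $h-e_i'-e_j'$ enters the negative part; from this one reads the slice data $x\mapsto\sup\{s:\ \text{big}\}$ and $y=(P_\sigma(\cdot)\cdot h)$ as explicit piecewise-linear functions of $t$. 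Assembling these over the resulting $(t,x)$-region (for instance with \cite{Polymake}, as in Theorem \ref{thm:CP}) yields the nine-vertex polytope, which degenerates to the stated tetrahedron when $d_1=d_2=d_3$.
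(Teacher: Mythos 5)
Your strategy matches the paper's — direct computation of the $\sigma$-Zariski decomposition on $\overline X$, second blow-up along the three pairwise-disjoint coordinate curves (each with split normal bundle of degree $(-1,-1)$), nefness check of the candidate positive part by restricting to prime components, reduction via Proposition \ref{prop:iNOthreefold} to the toric surface $\widetilde E=\Bl_{3\,\mathrm{pts}}\bb P^2$, and the $\frak S_3$-Cremona symmetry in part (5). The numerics you record ($n_i$, $m_k$, $\rho_*(E_i^2)=-\overline f_{jk}$, $(E_i^3)=2$) are also those used in the paper.

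There is, however, a genuine gap in your setup. You assert ``Working in $N^1(\overline X)_\RR$ with basis $\pi^*f_1,\pi^*f_2,\pi^*f_3,E$\ldots\ $\Eff(\overline X)=\langle\overline f_1,\overline f_2,\overline f_3,E\rangle$'' and similarly for the movable and nef cones, and you lean on this cone description for parts (1) and (2). But when the $C_i$ have positive genus, $N^1(C_1\times C_2\times C_3)_{\bb R}$ typically has rank $>3$ (correspondence classes on $C_i\times C_j$), so these four classes do not form a basis of $N^1(\overline X)_{\bb R}$ and the positive cones of $\overline X$ are not contained in, let alone equal to, the simplicial cone you describe. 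The paper is explicit about this obstacle — see the remark in the proof of Theorem \ref{thm:Seshdaricurveconjecture}(2) that the Picard rank can exceed $3$, so \cite{BDPP}-duality cannot be invoked in the naive way — and consequently never computes full cones on $\overline X$ or $\widetilde X$. Instead: the bound $\mu\leq d_1+d_2+d_3$ is obtained by showing $\sigma_{\overline f_k}(L_t)\geq t-d_i-d_j$ via restriction to $\overline f_k\cong\Bl_{\mathrm{pt}}(C_i\times C_j)$ and Lemma \ref{lem:twoelliptic}, and deriving a contradiction from the resulting effective class; movability of $\sum_i n_i\overline f_i+\tau E$ is proved directly by writing it, using $\sum_i n_i\geq\tau$, as a nonnegative combination of the $\overline f_i$ and of general fibers $\pi^*f_i$, all distinct from $E$; and nefness of $P_\sigma(\rho^*L_t)$ is verified component by component with an explicit case analysis on the intersections with the $\ell_{ij}$ on $\widetilde E$. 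That casework is exactly the ``delicate combinatorics'' you flag but do not carry out, and it is unavoidable: even in your proposed bookkeeping order (check $E_k$ and $\widetilde f_i$ first, then deduce $\widetilde E$), the nef test on $\widetilde f_i$ requires pairing with $\widetilde f_i\cap\widetilde E=\ell_{jk}$, which is where the inequality $m_i+m_j-n_k-\tau\geq 0$ must be established. Your proposal would need to be reworked along these lines to go through for curves of arbitrary genus.
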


Figures \ref{fig:C1C2C3111} and \ref{fig:C1C2C3234} show two cases of the convex body computation. 

\begin{figure}
	\begin{minipage}{.5\textwidth}
		\centering
		\includegraphics[width=.9\linewidth]{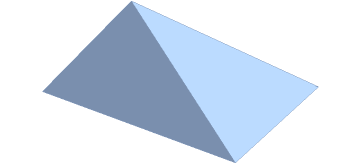}
		\captionof{figure}{$d_1=d_2=d_3=1$}
		\label{fig:C1C2C3111}
	\end{minipage}%
	\begin{minipage}{.5\textwidth}
		\centering
		\includegraphics[width=.8\linewidth]{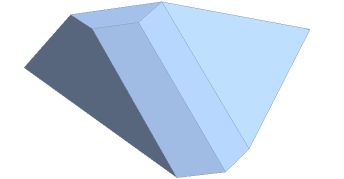}
		\captionof{figure}{$d_1=2,d_2=3,d_3=4$}
		\label{fig:C1C2C3234}
	\end{minipage}
\end{figure}

\begin{proof}
	(1) The strict transform $\overline f_i$ is isomorphic to the blow-up of $C_j\times C_k$ in $(c_j,c_k)$. The divisors $\pi^*f_i,\pi^*f_j,\pi^*f_k, E$ restrict as $0$, the pullbacks of $f_j,f_k$, and as the exceptional $\bb P^1$ respectively.
	
	Clearly $L_t\coloneqq \pi^*L-tE$ is effective for $0\leq t\leq d_1+d_2+d_3$, e.g., $L_{d_1+d_2+d_3}=\sum_i\overline f_i$. 
	For $t>d_i+d_j$, we have that $L_t|_{\overline f_k}$ is not effective by Lemma \ref{lem:twoelliptic}. If $L_t$ is effective, then $L_t-m\overline f_k$ restricts as $L_{t-m}$ on $\overline f_k$. We deduce that $\sigma_{\overline f_k}(L_t)\geq t-d_i-d_j$ and in particular $L_t-(t-d_i-d_j)\overline f_k$ is also effective. If $t>\sum_id_i$, then $L_t-\sum_i(t-d_j-d_k)(\pi^*f_k-E)=(\sum_id_i-t)(\sum_i\pi^*f_i+2E)$ is effective, which is impossible. 
	
	(2) With the arguments in (1), we have	$N_{\sigma}(L_t)\geq \sum_{i=1}^3\max\{0,t-d_j-d_k\}\overline f_i$, where $\{i,j,k\}=\{1,2,3\}$. To verify that equality holds, it remains to prove that $L_t-\sum_i\max\{0,t-d_j-d_k\}\overline f_i=\sum_i\min\{d_i,d_1+d_2+d_3-t\}\overline f_i+(d_1+d_2+d_3-t)E$ is movable. Note that $\tau=d_1+d_2+d_3-t$ is still in $[0,d_1+d_2+d_3]$. We want to see that $\sum_i\min\{d_i,\tau\}\overline f_i+\tau E$ is movable. Since $\overline f_i+E=\pi^*f_i$ is nef and $\min\{d_i,\tau\}\leq\tau$, it is sufficient to prove that $E$ is not a component of the base locus. This follows from the elementary $\sum_i\min\{d_i,\tau\}\geq\tau$ which allows us to write $L_t$ as nonnegative combination of the $\overline f_i$ and general fibers $\pi^*f_i$, all irreducible divisors distinct from $E$. 
	
	(3) Let ${\rm pr}_{ij}:X\to C_i\times C_j$ be the projection. Then $\overline f_{ij}$ is the strict transform of ${\rm pr}_{ij}^{-1}\{(c_i,c_j)\}$ and is isomorphic to $C_k$. We have $\overline f_{ij}\cdot \pi^*f_i=\overline f_{ij}\cdot \pi^*f_j=0$ and $\overline f_{ij}\cdot \pi^*f_k=\overline f_{ij}\cdot E=1$ with the latter scheme theoretically represented by $c_k\in C_k$. In particular $\overline f_{ij}\cdot\overline f_i=\overline f_{ij}\cdot\overline f_j=-1$. Then $\overline f_{ij}$ has normal bundle $\cal O_{C_k}(-c_k)^{\oplus 2}$ and so $E_i=\bb P_{C_i}(\cal O(c_i)^{\oplus 2})\simeq C_i\times\bb P^1$ and $-E_i|_{E_i}=\cal O_{C_i}(c_i)\boxtimes\cal O_{\bb P^1}(1)$ has bidegree $(1,1)$ on $C_i\times\bb P^1$.
	
	We have that $\widetilde f_i=\rho^*\overline f_i-E_j-E_k$. Geometrically it is isomorphic to $\overline f_i$. The three $\widetilde f_i$ are pairwise disjoint. Furthermore, $\widetilde f_i\cap E_i=\emptyset$ and $\widetilde f_i\cap E_j$ sits in $\widetilde f_i$ as $\overline f_k\simeq C_j$ for $i\neq j$ and in $E_j$ as a fiber over a point in $\bb P^1$.
	
	Note that $\widetilde E=\rho^*E$ is the blow-up of $\bb P^2$ in the 3 non-collinear points $p_k\coloneqq\overline f_{ij}\cap E$. Let $e_i\coloneqq E_i\cap\widetilde E$ and $\ell_{jk}\coloneqq\widetilde f_i\cap \widetilde E$. These are 6 lines in the standard Cremona configuration. For instance $\ell_{jk}$ is the strict transform of the line through $p_j$ and $p_k$ and $e_i$ is the exceptional line over $p_i$.
	
	Put $P_t\deq \sum_{i=1}^3n_i\overline f_i+\tau E$. Then $P_t\cdot f_{ij}=\tau-n_i-n_j$. If $n_i+n_j>\tau$, and $0\leq m<n_i+n_j-\tau$, then $(\rho^*P_t-mE_k)|_{E_k}$ is of bidegree $(m+\tau-n_i-n_j,m)$ which is not pseudoeffective. We deduce that $\sigma_{E_k}\rho^*P_t\geq\max\{0,n_i+n_j-\tau\}$. If we prove that $\rho^*P_t-\sum_i\max\{0,n_j+n_k-\tau\}E_i=\sum_{i=1}^3n_i\widetilde f_i+\sum_{i=1}^3m_iE_i+\tau\widetilde E\deq Q_t$ is movable, then it is $P_{\sigma}(\rho^*P_t)$. We verify that it is in fact nef by checking that the restriction to its components $\widetilde f_i$, $E_i$ and $\widetilde E$ is nef.
	
	The restriction $(Q_t)|_{\widetilde E}$ is $\sum_in_i\ell_{jk}+\sum_im_ie_i-\tau h$, where $h$ is the pullback of the line class from $\bb P^2$. Since $\widetilde E$ is a toric variety, its effective cone is generated by $T$-invariant lines, in this case by the $\ell_{ij}$ and by the $e_i$. Nefness is tested by intersecting with these line classes. The intersection with $e_i$ is $n_j+n_k-m_i$, nonnegative by the definition of $m_i$. The intersection with $\ell_{ij}$ is $-n_k+m_i+m_j-\tau$. 
	From (2), we know that $n_1+n_2+n_3\geq\tau\geq n_1\geq n_2\geq n_3\geq 0$. WLOG $i<j$. Then $n_j+n_k\leq n_i+n_k$. If $\tau<n_j+n_k$, then $m_i+m_j-n_k-\tau=\tau-n_k\geq 0$. If $n_j+n_k\leq \tau< n_i+n_k$, then the expression is $n_j\geq 0$. If $n_j+n_k\leq n_i+n_k\leq\tau$, the expression is $n_1+n_2+n_3-\tau\geq0$. Thus the restriction to $\widetilde E$ is nef.
	
	The restriction to $E_i$ has bidegree $(\tau-m_i,n_j+n_k-m_i)$, which is nef by the definition of $m_i$. Nefness on $\widetilde f_i$ is tested on the curves $\widetilde f_i\cap E_j$ and $\widetilde f_i\cap E_k$ by Lemma \ref{lem:twoelliptic}. We have already verified these when restricting on the $E_i$. We conclude that $P_t$ is nef.
	
	(4) Put $Q_t\coloneqq\sum_{i=1}^3n_i\widetilde f_i+\sum_{i=1}^3m_iE_i+\tau\widetilde E$. By the birational invariance of volume and since the movable parts capture all sections, we have $\vol(L_t)=\vol(P_{\sigma}(L_t))=\vol(P_t)=\vol(\rho^*P_t)=\vol(P_{\sigma}(\rho^*P_t))=\vol(Q_t)$. Since $Q_t$ is nef, we have $\vol(Q_t)=(Q_t^3)$ on $\widetilde X$. We have $Q_t=\sum_in_i(\rho^*\pi^*f_i-\rho^*E-E_j-E_k)+\sum_im_iE_i+\tau\widetilde E=\rho^*(\sum_in_i\pi^*f_i-(\sum_in_i-\tau)E)-\sum_i(n_j+n_k-m_i)E_i$.
	 Then from the projection formula, from $\rho_*(E_i^2)=-\overline f_{jk}$, $(E_i^3)=2$ and $(E^3)=1$, we have $(Q_t^3)=(\sum_in_if_i)^3-(\sum_in_i-\tau)^3E^3-3(\sum_in_i\pi^*f_i-(\sum_in_i-\tau)E)\sum_i(n_j+n_k-m_i)^2\overline f_{jk}-\sum_i(n_j+n_k-m_i)^3E_i^3=6n_1n_2n_3-(\sum_in_i-\tau)^3-3\sum_in_i(n_j+n_k-m_i)^2+3(\sum_in_i-\tau)\sum_i(n_j+n_k-m_i)^2-2\sum_i(n_j+n_k-m_i)^3$. 
	 
	 (5) We have already observed that $Q_t=P_{\sigma}(\rho^*P_t)=P_{\sigma}(\rho^*L_t)$ is nef. Furthermore \[F_t\deq (Q_t)|_{\widetilde E}=\sum_in_i\ell_{jk}+\sum_im_ie_i-\tau h.\]
	Looking to apply Proposition \ref{prop:iNOthreefold}, we want to prove $(F_t^2)>0$ for $0<t<d_1+d_2+d_2$ (equivalently $0<\tau<d_1+d_2+d_3$), i.e., $F_t$ is big, but also to understand the Zariski decompositions of $F_t-xh$.
	
	On the toric surface $\widetilde E$, the effective cone is generated by the $e_i$ and $\ell_{jk}$. Dually, the nef cone is generated by $h$, $h'\deq 2h-e_1-e_2-e_3$ (the pullback of the line class by the other 3-point blow-up of $\bb P^2$ structure on $\widetilde E$ coming from the Cremona transform where the exceptional curves are the $\ell_{ij}$), and $h-e_i=h'-\ell_{jk}$. An effective class like the nef $F_t$ fails to be in the interior of the effective cone (big), if and only if $(F_t\cdot \alpha)=0$ for at least one of the generators $\alpha$ of $\Nef(\widetilde E)$. We have $(F_t\cdot h)=\sum_in_i-\tau$. If this intersection number is 0, then since $\tau>0$, we have $n_i>0$ for all $i$, and thus necessarily $n_i<\tau$, i.e., $d_i<\tau$ for all $i$, and then $\sum_in_i-\tau=\sum_id_i-\tau=t>0$ by assumption. This is a contradiction. Next, $(F_t\cdot (h-e_i))=(\sum_in_i-\tau)-(n_j+n_k-m_i)=n_i+m_i-\tau=\min\{n_i+n_j+n_k-\tau,n_i\}>0$ as before. Finally, $(F_t\cdot h')=2(\sum_in_i-\tau)-\sum_i(n_j+n_k-m_i)=\sum_im_i-2\tau$. If $m_i=n_j+n_k$ for 2 or 3 values of $i$, then $\sum_im_i-2\tau>0$ as above. Otherwise $m_i=\tau$ for at least 2 indices $i,j$ and then $\sum_im_i-2\tau=m_k>0$. We conclude that $F_t$ is big, thus Proposition \ref{prop:iNOthreefold} applies. 
	
	
	
	
	For the Zariski decompositions of $F_t-xh$ we first specialize to $d_i=1$ for all $i$. Put $e\coloneqq\sum_{i=1}^3e_i$. We have \[F_t=\begin{cases}\sum_i\ell_{jk}+2\sum_ie_i+(t-3)h=th&\text{, if }0\leq t\leq 1\\
		\sum_i\ell_{jk}+(3-t)\sum_ie_i+(t-3)h=th-(t-1)e&\text{, if }1\leq t\leq 2\\
		(3-t)(\sum_i\ell_{jk}+\sum_ie_i-h)=(3-t)(2h-e)&\text{, if }2\leq t\leq 3
	\end{cases}\]
In the 2-dimensional slice of $N^1(\widetilde E)$ generated by $h$ and $e$, the effective cone is generated by $\ell\coloneqq \sum_i\ell_{jk}\equiv 3h-2e$ and by $e$. These are each a union of 3 disjoint $-1$ curves, hence negative. The nef cone is generated by $h$ and by $h'=2h-e$.
Then for $a,b\geq 0$ the class $ah-be$ is effective on $\widetilde E$ iff $a\geq\frac 32b$ and nef iff $a\geq 2b$. For the Zariski decompositions we have 
\[ah-be=\begin{cases}
	(ah-be)+0&\text{, if }a\geq 2b\\
	(2a-3b)h'+(2b-a)\ell&\text{, if } 2b\geq a\geq\frac 32b
\end{cases}\]
Then 
\[P_{\sigma}(F_t-xh)=\begin{cases}(t-x)h&\text{, if }0\leq x\leq t\leq 1\\ 
	(t-x)h-(t-1)e&\text{, if } 1\leq t\leq 2\text{ and }0\leq x\leq\min\{t,2-t\}=2-t\\
	(3-t-2x)h'&\text{, if }1\leq t\leq 2\text{ and } 2-t\leq x\leq\min\{t,\frac{3-t}2\}=\frac{3-t}2\\
	(3-t-2x)h'&\text{, if } 2\leq t\leq 3\text{ and }0\leq x\leq\frac{3-t}2
	\end{cases}
\]
In particular 
\[(P_{\sigma}(F_t-xh)\cdot h)=\begin{cases}
	t-x&\text{, if }0\leq t\leq 2\text{ and } 0\leq x\leq\min\{t,2-t\}\\ 
	6-2t-4x&\text{, if }1\leq t\leq 3\text{ and } \max\{0,2-t\}\leq x\leq \frac{3-t}2
\end{cases}.\] 
The first formula is valid over the triangle $(t,x)\in \textup{convex hull}((0,0),(1,1),(2,0))$. The second over the triangle $(t,x)\in \textup{convex hull}((1,1),(2,0),(3,0))$. 
The claim on the infinitesimal Newton--Okounkov body follows. 

We return to the general case for $d_i$. We first prove that $\inob{x}{L}$ is the set of points $(t,x,y)\in\bb R^3_{\geq 0}$ subject to the following conditions
\[\begin{cases}0\leq t\leq\sum_id_i\\
	0\leq x\leq\min\{\sum_in_i-\tau\ ,\ n_i+m_i-\tau, \frac 12\sum_im_i-\tau\}\\
	0\leq y\leq\sum_in_i+\sum_i\min\{-n_i+m_j+m_k-\tau-x,0\}-\tau-x
\end{cases}
\]From the description of the positive cones on $\widetilde E$, we find that $F_t-xh=\sum_in_i\ell_{jk}+\sum_im_ie_i-(\tau+x)h$ is (pseudo)effective iff the intersections with $h,h-e_i,h'$ are nonnegative. These give the conditions on $x$. 

In the effective range of $t$ and $x$, since $F_t$ is nef, the divisor $F_t-xh$ may fail to be nef only along the $\ell_{ij}$. It follows that the positive part of its Zariski decomposition is $F_t-xh+\sum_i\min\{((F_t-xh)\cdot\ell_{jk}),0\}\ell_{jk}$. Its intersection with $h$ gives the upper bound in the condition on $y$.

For the description of the body as a convex hull, one checks that each of the 9 points verifies the inequalities above and finally that the volume of the convex hull is $d_1d_2d_3=\frac 16\vol(L)$. We leave these as an exercise. For the volume computation, note that the body can be partitioned as the union of the tetrahedron from the case $L=d_3\cdot (f_1+f_2+f_3)$, a triangular prism, and a prism over an isosceles trapezoid. 
\end{proof}

\subsection{Curve times Jacobian surface}

 Let $J$ be the Jacobian of a smooth projective curve $D$ of genus 2. Denote $J_i\deq{\rm Pic}^iD$ and let $o\in J_2$ be the class of the hyperelliptic pencil. Let \[\mu:J_1\to J_2\] be the multiplication by 2 map. We have a canonical Abel--Jacobi inclusion $D\subset J_1$ and let 
 \[R\deq\mu(D).\] Denote $\theta$ the class of the principal polarization on $J$. It is represented by $D$ on $J_1$. The curve $R$ has class $4\theta$ and $\mult_oR=6$ as in Proposition \ref{prop:genus2surface}.
Let $C$ be a smooth projective curve and fix $c\in C$. Let 
\[X\deq C\times J_2\] with projections
\[\xymatrix{& X=C\times J_2\ar[dl]_p\ar[dr]^q& \\ C& & J_2}\] 
Let $x\deq (c,o)\in X$. Denote $J_c\deq p^{-1}(c)=\{c\}\times J_2$ and $Y\deq q^{-1}R=C\times R$. 

\subsubsection{The first blow-up}
Let $\pi:\overline X\to X$ be the blow-up of $x$ with exceptional divisor $E$. Consider in $\overline X$ the strict transforms 
\[\overline J_c\,\deq\, \overline{p^{-1}\{c\}}\simeq {\rm Bl}_oJ_2\quad\text{ and }\quad \overline Y\,\deq\,\overline{q^{-1}R}.\] The normalization map of $R$ identifies with $\mu|_D:D\to R$. 
Then the top horizontal map of the diagram below resolves $\overline Y$.  
\[\xymatrix{{\rm Bl}_{{C\times\mu}|_{C\times D}^{-1}(x)}C\times D\ar[r]\ar[d]& \overline Y\ar[d]\\ C\times D\ar[r]_{C\times\mu|_D}& C\times R}\]
Note that $(C\times\mu|_D)^{-1}(x)$ is the reduced union of the 6 points $(c,d_i)$, where the $d_i$ are the Weierstrass points of $D$. Denote by
\[\mu_Z:Z\to\overline Y\]
the map ${\rm Bl}_{\cup_i(c,d_i)}C\times D\to\overline Y$ above.
Let $f$ denote the class of of a fiber of $p$. Then $\overline J_c\equiv\pi^*f-E$, and $\overline Y\equiv 4\pi^*q^*\theta-6E$. We have the following intersections/restrictions/pullbacks:

\begin{center}
\begin{tabular}{|c|c|c|c|c|c|}
	\hline 
	 & $f$& $\theta$& $E$ &$\overline J_c$& $\overline Y$\\ \hline\hline 
	$\overline J_c$&  $0$ & $\pi^*\theta$& $E$& $-E$ & $\overline R_c=4\pi^*\theta-6E$\\
	\hline 
	$Z$& $\pi^*f_C$ & $8\pi^*f_D$& $E\deq\sum_{i=1}^6E_i$ & $\overline D_c=\pi^*f_C-E$ & $32\pi^*f_D-6E$ 	\\
	\hline $E$ & $0$& $0$& $-h$&  $\ell_J$& $6h$\\ \hline\hline 
\end{tabular}
\end{center}
In the table we restrict by the natural pullbacks the classes on the top row to the surfaces on the left column. For instance $\theta$ restricts to $\overline J_c$ via the composition $\overline J_c\hookrightarrow\overline X\to X\to J_2$. We have abused notation for $\pi$ and $E$ in $\overline J_c$ and $Z$. They always denote point blow-ups and the corresponding exceptional divisor. The classes on the row for $\overline J_c$ correspond to the notation in Proposition \ref{prop:genus2surface}. To restrict to $Z$, we pullback via $\mu_Z$. The classes $f_C$ and $f_D$ are the fibers of the projections to $C$ and $D$. The curve 
\[\overline D_c\text{ is the strict transform }\overline{\{c\}\times D}\text{ in }Z.\]
Finally $E$ in $Z$ is the union of the 6 exceptional lines over each $(c,d_i)$. On $E\simeq\bb P^2$ in $\overline X$, we denote by $h$ the class of a line. Here 
\[\ell_J\deq\overline J_c\cap E\text{ is the exceptional line of }\overline J_c.\] The curve 
\[\overline R_c\deq\overline{\{c\}\times R}\subset\overline X\] 
meets it in 6 points 
\[\{p_1,\ldots,p_6\}\deq \overline R_c\cap \ell_J=\overline R_c\cap E\] that can be seen as the images of $d_i$ under the hyperelliptic map. The curve 
\[\overline C_o\deq\overline{C\times\{o\}}\] 
meets $E$ in a point 
\[\{p_7\}=\overline C_o\cap E\] not on $\ell_J$. Then $\overline Y\cap E$ can be more precisely described as the union of the 6 lines $\ell_i$, where
\[\ell_i\text{ joins }p_i\text{ and }p_7\text{ in }\bb P^2\simeq E.\]

\begin{proposition}\label{prop:firstCxJ}With notation as above set $\alpha\deq a\overline J_c+b\overline Y+eE$. Then
	\begin{enumerate}
		\item $\alpha$ is pseudo-effective iff $a,b,e\geq 0$.
		\smallskip
		
		\item $\alpha$ is nef iff $a+6b\geq e\geq\max\{6b,a+\frac 23b\}$. Equivalently, \[\Nef(\overline X)\cap {\rm Span}(\pi^* f,\pi^*q^*\theta,E)=\langle\pi^*f,\ \pi^*q^*\theta,\ 4\pi^*f+3\pi^*q^*\theta-4E\rangle.\]
		\smallskip
		
		\item 
		$\alpha$ is movable iff $a\geq 0$ and $a+6b\geq e\geq\max\{a,\frac 23b\}$. Equivalently, \[\Mov(\overline X)\cap{\rm Span}(\pi^*f,\pi^*q^*\theta,E)=\langle \pi^*f,\pi^*q^*\theta,3\pi^*q^*\theta-4E,2\pi^*f+3\overline Y\rangle.\]
		\smallskip
		
		\item For $s\in(0,1)$ consider the ample divisor $L_s\deq sq^*\theta+(1-s)f$ on $X$. Then
		
		\[\mu(L_s;x)=1+\frac 12s\quad\text{ , }\quad\varepsilon(L_s;x)=\min\{\frac 43s,1-s\}\quad\text{ , }\quad \nu(L_s;x)=\min\{\frac 32s,1+\frac 13s\}\] 
				\smallskip
				
		\item Consider $L_{s,t}\deq \pi^*L_s-tE$ on $\overline X$ with $0\leq t\leq\mu(L_s;x)=1+\frac 12s$. Then 
		\begin{align*}P_{\sigma}(L_{s,t})&=L_{s,t}-(t-\frac 32s)_+\cdot\overline J_c-\frac 32(t-1-\frac 13s)_+\cdot\overline Y\\ &=\min\{1-s,1+\frac 12s-t\}\cdotp\overline J_c+\frac 32\min\{\frac 16s,1+\frac 12s-t\}\cdotp\overline Y+(1+\frac 12s-t)\cdotp E\end{align*}
	\end{enumerate}
\end{proposition}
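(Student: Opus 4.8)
The plan is to reduce every statement to (pseudo)effectivity/nefness questions on three surfaces. Set up coordinates first: from $\overline J_c\equiv\pi^*f-E$ and $\overline Y\equiv4\pi^*q^*\theta-6E$ one gets, for $\alpha=a\overline J_c+b\overline Y+eE$, the identity $\alpha=a\,\pi^*f+4b\,\pi^*q^*\theta+(e-a-6b)E$, so the upper facet $e\le a+6b$ in (2)--(3) says exactly that the coefficient of $E$ in the $\pi^*$-basis is $\le0$, while the lower facets will come from pairing against curves or from restrictions to sub-divisors. The three reductions are: (i) pushforward $\pi_*$ to $X$, which kills $E$ and, pairing against the movable curve classes $C\times\{pt\}$ (a fibre of $q$) and $\{pt\}\times\Theta$ (a theta-translate) that span $\Eff(X)^\vee$ in the plane dual to $\{f,q^*\theta\}$, detects $a,b\ge0$; (ii) restriction to $\overline J_c\cong{\rm Bl}_o J_2$, where Proposition \ref{prop:genus2surface} pins down $\Eff$ and $\Nef$ (and $\overline J_c|_{\overline J_c}=-E$, $\overline Y|_{\overline J_c}=\overline R_c\equiv4\pi^*\theta-6E$, $E|_{\overline J_c}=E$); and (iii) pullback along $\mu_Z\colon Z={\rm Bl}_{\cup_i(c,d_i)}(C\times D)\to\overline Y$, where the positive cones are controlled by the six $(-1)$-curves $E_i$, the six fibre strict transforms $\pi^*f_D-E_i$, and $\overline D_c=\pi^*f_C-\sum_iE_i$; together with restriction to $E\cong\mathbb P^2$, where $\alpha|_E\equiv(a+6b-e)h$.

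For (1), $\supseteq$ is clear since $\overline J_c,\overline Y,E$ are prime divisors; for $\subseteq$, a pseudoeffective class is a limit of (strict transforms of effective divisors on $X$)$+$(multiples of $E$), so $\pi_*$ gives $a,b\ge0$, and restricting any prime divisor other than $\overline J_c$ to $\overline J_c$ yields an effective class there, which by Proposition \ref{prop:genus2surface} forces $e\ge a\ge0$ --- the restriction being legitimate precisely because $\overline J_c$ is not a component of what is being restricted (restricting $\overline J_c$ to itself would give the non-pseudoeffective $-E$). For (2), a nef class is pseudoeffective, so $a,b,e\ge0$ by (1), and then $\alpha\cdot\ell_J=a+6b-e$, $\alpha\cdot\overline R_c=6(e-a-\tfrac23b)$ (using $\overline J_c\cdot\overline R_c=-6$, $\overline Y\cdot\overline R_c=-4$, $E\cdot\overline R_c=6$ from the intersection table), and $\alpha\cdot\overline C_o=e-6b$ yield the three inequalities; conversely these inequalities force $a,b\ge0$, so $\alpha=0+a\overline J_c+b\overline Y+eE$ has nonnegative coefficients and by the nefness test of \S\ref{subsec:neftest} it suffices to check that $\alpha|_{\overline J_c}=4b\,\pi^*\theta+(e-a-6b)E$, $\mu_Z^*(\alpha|_{\overline Y})=a\,\pi^*f_C+32b\,\pi^*f_D-(a+6b-e)\sum_iE_i$, and $\alpha|_E=(a+6b-e)h$ are nef, each being a short computation via Proposition \ref{prop:genus2surface} and the (toric, resp.\ fibred) geometry of $E$ and $Z$. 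For (3), the forward direction applies the necessary condition \S\ref{subsec:movcond} to the alterations $\overline J_c$, $Z\to\overline Y$, $E$: pseudoeffectivity of $\alpha|_{\overline J_c}$ gives $e\ge a$, of $\alpha|_E$ gives $e\le a+6b$, and of $\mu_Z^*(\alpha|_{\overline Y})$ on $Z$ gives $e\ge\tfrac23b$ (writing $\mu_Z^*(\alpha|_{\overline Y})$ in the generators of $\Eff(Z)$ forces the coefficients $\mu_i$ of the $\pi^*f_D-E_i$ to satisfy $\mu_i\ge6b-e$ with $\sum_i\mu_i=32b$, whence $32b\ge6(6b-e)$, i.e.\ $e\ge\tfrac23b$); together with $a\ge0$ from (1) this is the stated cone.

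For (4) one feeds $L_{s,t}=\pi^*L_s-tE$, with coordinates $(a,b,e)=(1-s,\tfrac s4,1+\tfrac s2-t)$, into parts (1)--(3): (1) fails at $t=1+\tfrac s2$, (2) gives $t\le\min\{1-s,\tfrac43s\}$, (3) gives $t\le\min\{\tfrac32s,1+\tfrac13s\}$. For (5), since $L_s$ is ample, $E$ is not a component of $N_\sigma(L_{s,t})$ while $L_{s,t}$ is big (as in the proof of Proposition \ref{prop:blCxP2positive}), so $\Supp N_\sigma(L_{s,t})\subseteq\{\overline J_c,\overline Y\}$; one then exhibits $P\deq L_{s,t}-(t-\tfrac32s)_+\cdot\overline J_c-\tfrac32(t-1-\tfrac13s)_+\cdot\overline Y$ and checks, by a short case split over the thresholds $t=\tfrac32s$ and $t=1+\tfrac13s$, that the coordinates of $P$ satisfy the inequalities of (3), so $P$ is movable and, by \S\ref{subsec:Nakayama}(1), $N_\sigma(L_{s,t})\le(t-\tfrac32s)_+\cdot\overline J_c+\tfrac32(t-1-\tfrac13s)_+\cdot\overline Y$. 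For the reverse bound use \S\ref{subsec:Nakayama}(2): $L_{s,t}|_{\overline J_c}=s\pi^*\theta-tE$ on $\overline J_c$, so by Proposition \ref{prop:genus2surface} the least $\lambda$ with $(L_{s,t}-\lambda\overline J_c)|_{\overline J_c}$ pseudoeffective is $(t-\tfrac32s)_+$, giving $\sigma_{\overline J_c}(L_{s,t})\ge(t-\tfrac32s)_+$; a parallel computation on $Z$ gives $\sigma_{\overline Y}(L_{s,t})\ge\tfrac32(t-1-\tfrac13s)_+$. Equality in both yields the first displayed formula, and the second follows by rewriting the positive parts as minima.

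The main obstacle is the inclusion $\supseteq$ in part (3): showing that the non-nef extremal rays $3\pi^*q^*\theta-4E$ and $2\pi^*f+3\overline Y$ (and hence, since movable classes form a cone closed under sums, the whole claimed cone) are genuinely movable. One cannot merely deform $R$, since $\overline Y$ is rigid (the multiplicity-$6$ point of $R$ does not deform inside $|4\theta|$, by an intersection estimate of $R$ against members of $|4m\theta|$). Instead one must produce, for $m\gg0$, positive-dimensional families of effective divisors in $|mD|$ with no common fixed divisorial component, exploiting that the systems $|m\theta|$ on the abelian surface $J_2$ are large enough that their members can carry a point of the prescribed multiplicity at $o$ while still varying; one then concludes via \S\ref{subsec:movtest}. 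Calibrating these multiplicity-versus-dimension estimates so that the subtracted $\overline J_c$- and $\overline Y$-parts land exactly on the stated rays (the same estimates also drive the $\sigma_{\overline Y}$-bound in (5)) is where the real work lies; everything else is the bookkeeping above together with the surface input from Proposition \ref{prop:genus2surface}.
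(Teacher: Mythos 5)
Your plan matches the paper's proof in outline for parts (1), (2), (4), (5) and the $\subseteq$ direction of (3), and those parts are essentially correct (with one caveat noted below). However, two issues need to be flagged.

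\textbf{Part (1): the intermediate claim $e\ge a$ is false.} Take $\alpha=\overline J_c$, i.e.\ $(a,b,e)=(1,0,0)$: this is effective, yet $e<a$. The error in your derivation is that restricting ``every prime component other than $\overline J_c$'' to $\overline J_c$ produces an effective class on $\overline J_c$ whose $\overline J_c$-coefficient is $a-m$, not $a$, where $m$ is the multiplicity of $\overline J_c$ in the chosen effective representative. The correct deduction from Proposition~\ref{prop:genus2surface} is therefore $e\ge a-m$. You then need the additional observation (e.g.\ by pushing $D'$ forward to $X$ and using $\Eff(X)=\langle f,q^*\theta\rangle$) that $m\le a$, from which $e\ge a-m\ge 0$ follows. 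So the conclusion $e\ge 0$ is recoverable, but not by the inequality you wrote.

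\textbf{Part (3), $\supseteq$: this is a genuine gap.} You correctly identify this as the crux, but you only sketch a plan (multiplicity-versus-dimension estimates on $|m\theta|$) and explicitly say that ``calibrating these \dots is where the real work lies.'' That calibration is nontrivial and is not done: a dimension count shows $|m(3\pi^*q^*\theta-4E)|$ is nonempty for $m\gg 0$, but does not by itself rule out fixed divisorial components (which is what movability demands, and what makes \S\ref{subsec:movtest} applicable). The paper resolves this by an entirely different and much shorter device: the dominant rational map $\overline X\dashrightarrow\overline J_c$ (project $\overline X\to X\to J_2$ and lift past the blow-up of $o$) pulls back the nef class $3\pi^*\theta-4E$ on $\overline J_c$ to $3\pi^*q^*\theta-4E$ on $\overline X$, and dominant rational pullback preserves movability; then $2\pi^*f+3\overline Y=2\overline J_c+4(3\pi^*q^*\theta-4E)$ is movable by \S\ref{subsec:movtest}, since it is simultaneously $P+N$ with $(P,N)=(2\pi^*f,\,3\overline Y)$ and $(P,N)=(4(3\pi^*q^*\theta-4E),\,2\overline J_c)$, the two negative parts having disjoint supports. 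Note also that after establishing movability of the generators, the paper still has to verify that no $a'(2\pi^*f+3\overline Y)+b'(3\pi^*q^*\theta-4E)+c'\overline Y$ with $a',b',c'>0$ is movable; your pullback-to-$Z$ inequality $e\ge\tfrac23 b$ (obtained by pairing against the nef class $\pi^*f_C+\sum_i\overline f_{D,i}$, as the paper does in the alternative) does cover this, so your $\subseteq$ argument is fine; only $\supseteq$ is missing.

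In short: parts (1), (2), (4), (5) and (3)$\subseteq$ are essentially the paper's argument modulo the correctable slip in (1), but the movability of the two boundary rays in (3) is left unproven, and the route you propose to fill it is harder (and different) from what the paper actually does.
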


Figure \ref{fig:CxJ} shows an illustrative slice in the cones $\Eff(\overline X)\supseteq\Mov(\overline X)\supseteq\Nef(\overline X)$.

\begin{figure}
	\centering
	\includegraphics[scale=0.25]{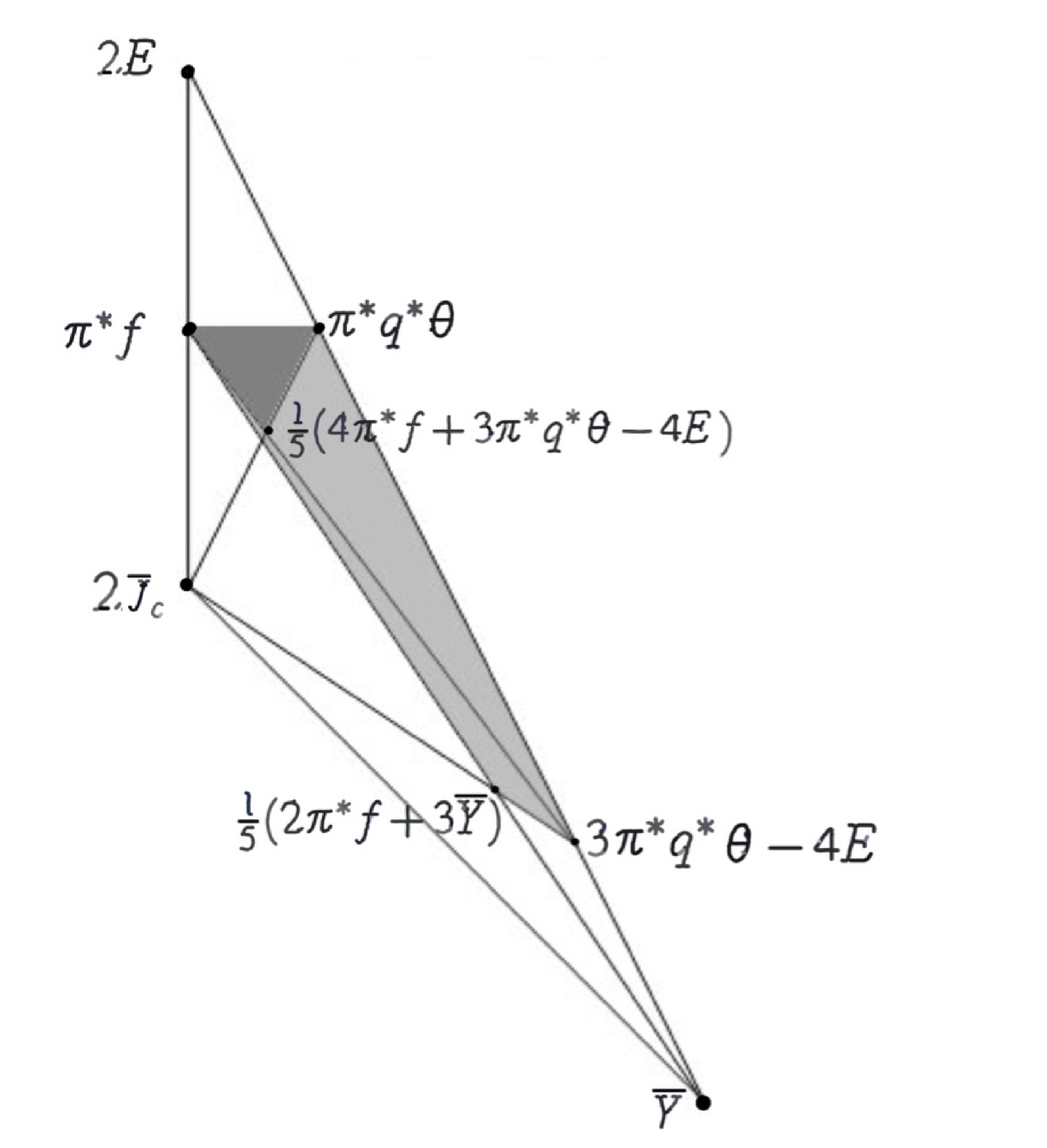}
	\caption{A slice in the cones of divisors on $\overline X$}
	\label{fig:CxJ}
\end{figure}

\begin{proof}
	(1) Note that $\alpha=\pi^*(af+4bq^*\theta)+(e-a-6b)E$. By projecting to $J_2$ we see that $a\geq 0$. After intersecting with a general fiber of $p$, we see that $b\geq 0$. If $e<0$, then WLOG our class is represented by an effective divisor without $\overline J_c$ in its support. By restricting to $\overline J_c$, with the notation in Proposition \ref{prop:genus2surface}, we obtain that $4b\pi^*\theta-(a+6b-e)E$ is effective on $\overline J_c$, which is impossible since $-(a-e)<0$. 
	
	(2) We seek to apply the strategy in \ref{subsec:neftest}. By restricting $\alpha$ to $E$, we obtain $a+6b\geq e$.
	This is the condition $\alpha\cdot\ell\geq 0$, where $\ell$ is a line in $E\simeq\bb P^2$.	
	
	By restricting to $\overline J_c$, by Proposition \ref{prop:genus2surface} we obtain $a+\frac 23b\leq e$. This is the condition $\alpha\cdot{\overline R}_c\geq 0$, where $\overline R_c=\overline{\{c\}\times R}$. 
	
	For $\overline Y$, we pullback to its resolution $Z$. This gives  \[\pi^*(af_C+32bf_D)-(a+6b-e)E,\] 
	that we rewrite as $\pi^*f_C+\frac{16}3b\sum_{i=1}^6\overline f_{D,i}+(e-a-\frac 23b)E$, were 
	\[\overline f_{D,i}\deq\overline{C\times\{d_i\}}\subset Z.\] 
	Using $a+\frac 23b\leq e$, this is a nonnegative combination of $\pi^*f_C$, $E$, and the $\overline f_{D,i}$. The necessary additional conditions for nefness are the nonnegativity of intersection with each $\overline f_{D,i}$. This gives $e\geq 6b$. This is the condition $\alpha\cdot\overline C_o\geq 0$, where $\overline C_o=\overline{C\times\{o\}}$.
	
	The cone cut out by the inequalities above is the cone generated by the three classes $\pi^*f,\ \pi^*q^*\theta,\ 4\pi^*f+3\pi^*q^*\theta-4E$, as claimed.
	
	(3) The classes $\pi^*f$ and $\pi^*q^*\theta$ are movable since they are nef. 
	On $\overline J_c$, the class $3\pi^*\theta-4E$ is nef by Proposition \ref{prop:genus2surface}.
	We have a rational dominant map $\overline X\dashrightarrow\overline J_c$ that is not a morphism. We can define a pullback of divisors and it preserves movability. It also takes  $3\pi^*\theta-4E\in N^1(\overline J_c)$ to $3\pi^*q^*\theta-4E\in N^1(\overline X)$.
	 Finally, $2\pi^*f+3\overline Y=2\overline J_c+4(3\pi^*q^*\theta-4E)$  is movable by \ref{subsec:movtest}. We have proved that the claimed generators of the slice are indeed movable.
	
	If $\alpha$ is movable on $\overline X$, then it has pseudoeffective restriction to $E$ and $\overline J_c$, and pseudoeffective pullback to $Z$ by \ref{subsec:movcond}. These conditions translate to $a+6b\geq e$ and respectively $e\geq a$ by Proposition \ref{prop:genus2surface}.
	It follows that our slice of interest of the movable cone is contained in $\langle \pi^*f,\pi^*q^*\theta,\overline Y\rangle$.
	
	It remains to prove that no $a(2\pi^*f+3\overline Y)+b(3\pi^*q^*\theta-4E)+c\overline Y$ with $a,b,c>0$ can be movable.
	It suffices to check that its pullback to $Z$ is not pseudoeffective. The pullback to $Z$ is $a(2\pi^*f_C+96\pi^*f_D-18\sum_{i=1}^6E_i)+b(24\pi^*f_D-4\sum_{i=1}^6E_i)+c(32\pi^*f_D-6\sum_{i=1}^6E_i)=a(2\overline D_c+16\sum_{i=1}^6\overline f_{D,i})+b(4\sum_{i=1}^6\overline f_{D,i})+c(6\sum_{i=1}^6\overline f_{D,i}-4\pi^*f_D)=(2a\overline D_c+(16a+4b+6c)\sum_{i=1}^6\overline f_{D,i})-4c\pi^*f_D$. The seven curves $\overline D_c$ and $\overline f_{D,i}$ have negative self-intersection and are pairwise disjoint. Then $2a\overline D_c+(16a+4b+6c)\sum_{i=1}^6\overline f_{D,i}$ is an effective and negative (in the Zariski decomposition sense) class. As such it cannot dominate any nonzero movable class like $4c\pi^*f_D$. Alternatively, one checks that $\pi^*f_C+\sum_{i=1}^6\overline f_{D,i}$ is nef and has negative intersection with the restriction of $\beta$. 
	
	In Figure \ref{fig:CxJ}, the slice of the movable cone is the quadrilateral with vertices the generators described above. For the description by inequalities, the right side is $a\geq 0$ (the side it partially shares with the effective cone), the top side is $a+6b\geq e$ (the side it shares with the nef cone), the left side is $e\geq a$, and the bottom side is the remaining $e\geq\frac 23b$. 
	
	(4) We have 
	\[L_{s,t}=(1-s)\overline J_c+\frac 14s\overline Y+(1+\frac 12s-t)E.\] The formulas for $\mu(L_s;x)$, $\epsilon(L_s;x)$, $\nu(L_s;x)$ follow from $(1)$, $(2)$, $(3)$ respectively.
	
	(5) $L_{s,t}$ is movable for $0\leq t\leq\nu(L_s;x)$. 
	
	If $s\leq\frac 67$, then $\nu(L_s;x)=\frac 32s$ by $(4)$. If $t=\frac 32s+\delta$, with $0<\delta\leq 1+\frac 12s-\frac 32s=1-s$ (to keep $L_{s,t}$ pseudoeffective) then for $0\leq u< \delta$, we have $(L_{s,t}-u\overline J_c)|_{\overline J_c}=s\theta +(u-t)E=s(\theta-\frac 32E)+(u-\delta)E\in N^1(\overline J_c)$ is not pseudoeffective by Proposition \ref{prop:genus2surface}. We deduce that $\sigma_{\overline J_c}(L_{s,t})\geq\delta=t-\frac 32s$. When $\frac 32s\leq t\leq 1+\frac 13s$, then $L_{s,t}-(t-\frac 32s)\overline J_c=(1+\frac 12s-t)\overline J_c+\frac 14s\overline Y+(1+\frac 12s-t)E$ is movable by $(3)$, hence it is the positive part of $L_{s,t}$ in this range.	
	When $t=1+\frac 13s+\delta$ with $0<\delta\leq\frac 16s$ (again to keep $L_{s,t}$ pseudoeffective), then for $0\leq u<\frac 32\delta\leq\frac 14s$, we have that the pullback of $L_{s,t}-(t-\frac 32s)\overline J_c-u\overline Y$ to the resolution of $\overline Y$ is $(\frac 16s-\delta)\overline D_c+\frac 43(s-4u)\sum_{i=1}^6\overline f_{D,i}+(\frac 23u-\delta)E$. Arguing as in $(3)$, we see that this class cannot be pseudoeffective. This implies $\sigma_{\overline Y}(L_{s,t}-(t-\frac 32s)\overline J_c)\geq \frac 32\delta$. The class $L_{s,t}-(t-\frac 32s)\overline J_c-\frac 32\delta\overline Y=(1+\frac 12s-t)(\overline J_c+\frac 32\overline Y+E)$ is a multiple of a generator of $\Mov(\overline X)$ in $(3)$. The claim on the $\sigma$-Zariski decomposition follows.
	
	The argument for $s\geq\frac 67$ is similar, but we first restrict to $\overline Y$ and then to $\overline J_c$.
\end{proof}

Our goal is to find birational Fujita--Zariski decompositions for $L_{s,t}$. 
Up to scaling, we have seen that we find 
\[\alpha\deq 2\pi^*f+3\overline Y\]
 as a (movable) positive part whenever $1+\frac 12s\geq t\geq\max\{\frac 32s,1+\frac 13s\}$. The class $\alpha$ is not nef. It has negative intersection with both
$\overline R_c=\overline{\{c\}\times R}$ and $\overline C_o=\overline{C\times\{o\}}$. 
Thus $\overline X$ is not where we find the desired decompositions.
We first explain how to treat $\alpha$. This will require the blow-up of $\overline C_o$, but also three blow-ups in copies of $D$ over $\overline R_c$. Then we will show that in fact these blow-ups determine birational Fujita--Zariski for all $L_{s,t}$.

\subsubsection{The second blow-up}
Let $\rho:\widetilde X\to\overline X$ be the blow-up of the two disjoint smooth curves $\overline R_c$ and $\overline C_o$ with exceptional divisors $F_R$ and $F_C$ respectively. We have the following strict transforms
\[\widetilde E=\rho^*E,\quad \widetilde J_c=\rho^*\overline J_c-F_R,\quad \widetilde Y=\rho^*\overline Y-F_R-6F_C.\]
For instance the latter holds because $\overline Y$ is smooth at a general point of $\overline R_c$ and has multiplicity 6 at a general point of $\overline C_o$, since $q^{-1}R=C\times R$ has these properties in $X$. 
Geometrically, $\widetilde E$ is the blow-up of $E\simeq\bb P^2$ in the $7$ points $p_i$. Recall that the first 6 points $p_i$ are on the line $\ell_J=E\cap\overline J_c$. We have an isomorphism $\widetilde J_c={\rm Bl}_{\overline R_c}\overline J_c\simeq\overline J_c$ induced by $\rho$. Finally, via the doubling map $\mu:J_1\to J_2$, we see that $\widetilde Y$ is smooth, isomorphic to $Z$.

Indeed $\mu^{-1}R$ is the union of the 16 translates of $D\subset J_1$ by the theta characteristics $T_j$ (translates of the 2-torsion points in $J_0$). Note that the effective $T_j$ are precisely the six $d_i$, with the other 10 being of form $d_i+d_j-d_1$. Any two translates of $D$ meet transversally in 2 points. After blowing-up the 16 $T_j$ in $J_1$, the strict transforms $\overline{D+T_j}$ are disjoint. 

After blowing-up all $(c,T_j)$ in $C\times J_1$, since $\mu$ is \' etale, the surface $\overline Y$ is still singular along the images of the $\overline{C\times D}\cap\overline{C\times(D+T_j)}$. This is a transverse intersection consisting of two disjoint $\overline{C\times\{d_i\}}$ that map to $\overline C_o$. The preimage of $\overline C_o$ is the union of the 16 such curves. The preimage of $\overline R_c$ is the union of the 16 disjoint curves in the previous paragraph. The base change of $\rho$ separates all $\widetilde{C\times (D+T_j)}$ whose union is the \' etale preimage of $\widetilde Y$. The blow-up of $\overline C_o$ would suffice for this.

The curve $\overline R_c$ is the complete intersection of $\overline J_c$ and $\overline Y$. We have $\overline J_C|_{\overline R_c}=-E|_{\overline R_c}$. 
We claim that $E|_{\overline R_c}=\cal O_D(\sum_{i=1}^6d_i)=\omega_D^{\otimes 3}$. To see this, consider the sum map $D\times D\to J_2$. It factors through the blow-up of $o$ such that the graph of the hyperelliptic involution of $D$ maps to the exceptional $\bb P^1$ as the natural $2$-to-$1$ map. The diagonal maps to $\overline R$. The diagonal and the graph of the hyperelliptic involution meet only at the points $(d_i,d_i)$. Next, $\overline Y|_{\overline R_c}=(\overline Y|_{\overline J_c})|_{\overline R_c}=\cal O_{\overline R_c}(\overline R_c)$. By adjunction, $\omega_D=\omega_{\overline J_c}(\overline R_c)|_{\overline R_c}=\cal O_{\overline J_c}(E+\overline R_c)|_{\overline R_c}$, hence $\overline Y|_{\overline R_c}=\omega_D\otimes\cal O_{\overline R_c}(-E)=\omega_D^{\otimes -2}$.
Then $N_{\overline R_c|\overline X}^{\vee}=\omega_D^{\otimes 2}\oplus\omega_D^{\otimes 3}$ and $F_R$ is its projectivization. Furthermore, $-F_R|_{F_R}$ is the Serre $\cal O(1)$.

The curve $\overline C_o$ is a connected component of the transverse complete intersection $\overline{C\times(D+d_1)}\cap\overline{C\times(D+d_2)}$. We have $\overline{C\times(D+d_1)}|_{\overline C_o}=-E|_{\overline C_o}=\cal O_C(-c)$. As above we obtain $F_C\simeq\bb P_C(\cal O_C(c)^{\oplus 2})\simeq C\times\bb P^1$ and $-F_C|_{F_C}$ is the relative Serre $\cal O(1)$.
We list the relevant restrictions:

\begin{center}
	\begin{tabular}{|c|c|c|c|c|c|c|c|}
		\hline 
		& $f$& $\theta$& $\widetilde E=\rho^*E$& $\widetilde J_c$ & $\widetilde Y$& $F_R$& $F_C$\\
		\hline\hline 
		$\widetilde J_c\simeq\overline J_C$ & $0$ & $\pi^*\theta$  & $E$& $-E-\overline R$ & $0$ & $\overline R$ & $0$ \\
		\hline 
		$\widetilde Y\simeq Z$ & $\pi^*f_C$ & $8\pi^*f_D$ & $E=\sum_{i=1}^6E_i$& $0$ & $-\overline D_c-4\pi^*f_D$ & $\overline D_c=\pi^*f_C-E$ & $\sum_{i=1}^6\overline f_{D,i}$ \\
		\hline 
		$\widetilde E$ & $0$ & $0$ & $-\rho^*h$& $\widetilde{\ell}_J$ & $\sum_{i=1}^6\widetilde\ell_i$ & $\sum_{i=1}^6e_i$ & $e_7$ \\
		\hline 
		$F_R$ & $0$  & $8f$ & $6f$& $\xi-6f$ & $\xi-4f$ & $-\xi$ & $0$ \\ 
		\hline 
		$F_C$ & $f$ & $0$ & $f$& $0$ & $6(\xi-f)$ & $0$ & $-\xi$ 	\\
		\hline 	 
	\end{tabular}
\end{center}
Among the less trivial restrictions we have:
$\widetilde Y|_{\widetilde Y}=32\pi^*f_D-6E-\overline D_c-6\sum_{i=1}^6\overline f_{D,i}=-\overline D_c-4\pi^*f_D$; and $\widetilde Y|_{\widetilde E}=6\rho^*h-\sum_{i=1}^6e_i-6e_7=\sum_{i=1}^6\widetilde\ell_i$, where $e_i$ are the exceptional lines in $\widetilde E$ over the $p_i$; and $\widetilde Y|_{F_C}=\sum_{i=1}^6C\times\{p_i\}=6(\xi-f)$ in $F_C\simeq C\times\bb P^1$. In $F_R$ and $F_C$ we have denoted by $\xi$ the class of the Serre $\cal O(1)$, and by $f$ the class of a $\bb P^1$-fiber over $D$ and $C$ respectively. Because of the explicit descriptions of the projectivizations, we have that $\Eff(F_R)=\langle\xi-6f,f\rangle$ and $\Nef(F_R)=\langle\xi-4f,f\rangle$, while $\Eff(F_C)=\Nef(F_C)=\langle\xi-f,f\rangle$. More generally, with arguments as in \cite[Ch.V.\S2]{H77} or \cite{Ful11} we have the following:
\begin{lemma}
	Let $C$ be a smooth projective curve. Let $L_1,L_2$ be line bundles of degrees $d_1\geq d_2$ on $C$. In the ruled surface $X=\bb P(L_1\oplus L_2)$ denote by $\xi$ the class of the Serre $\cal O(1)$, and by $f$ the class of a fiber of the natural projection $X\to C$. Then
	\[\Eff(X)=\langle\xi-d_1f,f\rangle\quad\text{ and }\quad\Nef(X)=\langle \xi-d_2f,f\rangle.\]
\end{lemma}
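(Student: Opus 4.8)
The plan is to run the standard analysis of a split ruled surface, as in \cite[Ch.V.\S2]{H77}, by tracking the two sections coming from the direct-sum decomposition. Recall that $N^1(X)_{\bb R}=\bb R\xi\oplus\bb R f$ with $(f^2)=0$, $(\xi\cdot f)=1$, and $(\xi^2)=\deg(L_1\oplus L_2)=d_1+d_2$. The two projections $L_1\oplus L_2\twoheadrightarrow L_i$ define sections $\sigma_i\colon C\hookrightarrow X$ with $\sigma_i^{*}\cal O_X(1)=L_i$; set $C_i\deq\sigma_i(C)$, an irreducible curve isomorphic to $C$ and hence a reduced effective divisor. From $(\xi\cdot C_i)=\deg L_i=d_i$ and the basis relations one solves $C_1\equiv\xi-d_2f$, $C_2\equiv\xi-d_1f$, and then $(C_1^2)=d_1-d_2\geq0$, $(C_2^2)=d_2-d_1\leq0$, $(C_1\cdot C_2)=0$ (consistent with $N_{C_i/X}$ having degree $d_i-d_j$, $\{i,j\}=\{1,2\}$). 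In particular $f$ and $C_2\equiv\xi-d_1f$ are effective curve classes.

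For the effective cone I would show that every irreducible curve $D$ on $X$ has class in $\langle\xi-d_1f,\ f\rangle$. If $(D\cdot f)=0$ then $D$ is contained in a fibre, so $D\equiv f$. Otherwise set $\lambda\deq(D\cdot f)>0$ and write $[D]=\lambda\xi+\mu f$. If $D=C_2$ there is nothing to prove; if $D\neq C_2$, then $(D\cdot C_2)\geq0$ because $D$ and $C_2$ are distinct irreducible curves. Rewriting $[D]=\lambda(\xi-d_1f)+(\mu+\lambda d_1)f$ and computing $\mu+\lambda d_1=(D\cdot C_2)+\lambda(d_1-d_2)\geq0$ with the help of $\lambda\geq0$ and $d_1\geq d_2$, we get $[D]\in\langle\xi-d_1f,\ f\rangle$. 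Since every effective divisor is a nonnegative sum of irreducible curves and the cone on the right is closed, this gives $\Eff(X)=\langle\xi-d_1f,\ f\rangle$; the same computation shows the Mori cone of curves equals this cone.

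For the nef cone I would use that on the surface $X$ the nef cone is dual to the Mori cone under the intersection pairing, and dualize the two-dimensional cone $\langle f,\ \xi-d_1f\rangle$: a class $\lambda\xi+\mu f$ is nef iff $(\lambda\xi+\mu f)\cdot f=\lambda\geq0$ and $(\lambda\xi+\mu f)\cdot(\xi-d_1f)=\lambda d_2+\mu\geq0$, and the cone cut out by these two inequalities is exactly $\langle f,\ \xi-d_2f\rangle$. Equivalently one can argue directly: $\xi-d_2f\equiv C_1$ meets $f$, $C_2$, and, being a distinct irreducible curve, every other irreducible curve non-negatively, and $(C_1^2)=d_1-d_2\geq0$, so $C_1$ is nef; conversely a nef class meets the effective curves $C_2$ and $f$ non-negatively, which forces it into $\langle f,\ \xi-d_2f\rangle$. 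Hence $\Nef(X)=\langle\xi-d_2f,\ f\rangle$.

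The whole argument is bookkeeping in a rank-two N\'eron--Severi group; the only point deserving attention is recognizing $C_2\equiv\xi-d_1f$ as an honest irreducible effective curve — so that intersecting an arbitrary irreducible $D$ against it is legitimate — and noticing that the hypothesis $d_1\geq d_2$ is precisely what makes $(D\cdot C_2)+\lambda(d_1-d_2)$ non-negative. When $d_1=d_2$ the two cones coincide with $\langle\xi-d_1f,\ f\rangle$, with $C_1\equiv C_2$ now a section of self-intersection $0$, and the argument goes through verbatim.
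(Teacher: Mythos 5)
Your proof is correct and is precisely the classical argument from \cite[Ch.V.\S2]{H77} that the paper cites for this lemma (the paper gives no proof of its own, deferring to that reference and \cite{Ful11}): produce the two sections $C_i$ from the summand quotients, compute $C_1\equiv\xi-d_2f$, $C_2\equiv\xi-d_1f$, bound an arbitrary irreducible curve by intersecting against the negative section $C_2$ and a fiber, and then dualize on the surface. The bookkeeping is right, including the boundary case $d_1=d_2$.
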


Let \[\beta\deq \rho^*\alpha-2F_R-16F_C= 2\rho^*\pi^*f+12\rho^*\pi^*q^*\theta-18\rho^*E-2F_R-16F_C.\] 
It will follow from our work that it is the positive part of $\rho^*\alpha$ in its $\sigma$-Zariski decomposition, i.e., $N_{\sigma}(\rho^*\alpha)=2F_R+16F_C$. For now we explain how the coefficients $2$ and $16$ are found.
Since $\alpha$ is movable, the negative part of $\rho^*\alpha$ is at most supported on the exceptional divisors $F_R$ and $F_C$. The class $\rho^*\alpha-uF_R-vF_C$ restricts of $F_R$ as $u\xi-12f$ and to $F_C$ as $v\xi-16f$. These are not pseudoeffective if $0\leq u<2$ and $0\leq v<16$. Thus the coefficients of $F_R$ and $F_C$ in $N_{\sigma}(\rho^*\alpha)$ are at least 2 and 16 respectively.
 
	We compute $\beta|_{\widetilde J_c}=\overline R_c$ and $\beta|_{F_R}=2(\xi-6f)$. These are effective, but not nef. They have negative intersection with \[\widetilde R\deq \widetilde J_c\cap F_R.\]
	This curve identifies with $\overline R_c$ via $\rho$, but we use the notation $\widetilde R$ when we want to study it in $\widetilde X$. 
	For the other restrictions, we have $\beta|_{\widetilde Y}=0$, $\beta|_{F_C}=16(\xi-f)$, and $\beta|_{\widetilde E}=2 \widetilde{\ell}_J+16(\rho^*h-e_7)$ which are nef. 
	
	Thus $\beta$ only fails to be nef because of $\widetilde R$. Furthermore $\beta|_{F_R}=2(\xi-6f)$ suggests that we further blow-up $\widetilde R$ ``twice''. 

\subsubsection{The third blow-up} Let 
\[\sigma:\widehat X\to\widetilde X\] be the blow-up of $\widetilde R$ with exceptional divisor $G$. The intersections $\widetilde J_c\cdot\widetilde R=\widetilde J_c|_{F_R}^2=(\xi-6f)^2=-2$ and $F_R\cdot\widetilde R=F_R|_{\widetilde J_c}^2=(\overline R)^2=-4$ lead to $G\simeq \bb P(\omega_D\oplus\omega_D^{\otimes 2})$ and $-G|_G$ is the relative $\cal O(1)$. 

The strict transform $\widehat E$ is the blow-up of $\widetilde E$ in the 6 points $e_i\cap\widetilde{\ell}_J$. Denote the exceptional lines by $g_i$. The surfaces $\widetilde J_c$, $\widetilde Y$, $F_R$, $F_C$ are isomorphic to their strict transforms. We also have $\widehat J_c=\sigma^*\widetilde J_c-G$ and $\widehat F_R=\sigma^*F_R-G$, while $\widehat Y=\sigma^*\widetilde Y$, $\widehat F_C=\sigma^*F_C$, and $\widehat E=\sigma^*\widetilde E$.
We have the following restrictions

\begin{center}
	\begin{tabular}{|c|c|c|c|c|c|c|c|c|}
		\hline 
		& $f$& $\theta$& $\widehat E$& $\widehat J_c$ & $\widehat Y$ & $\widehat F_R$& $\widehat F_C$& $G$\\
		\hline\hline 
		$\widehat J_c$ & $0$ & $\pi^*\theta$  & $E$ & $-E-2\overline R$ & $0$ & $0$ & $0$ & $\overline R$ \\
		\hline 
		$\widehat Y$ & $\pi^*f_C$ & $8\pi^*f_D$ & $\sum_{i=1}^6E_i$ & $0$ & $-\overline D_c-4\pi^*f_D$ & $\overline D_c$ & $\sum_{i=1}^6\overline f_{D,i}$ & $0$ \\
		\hline 
		$\widehat E$ & $0$ & $0$ & $-\sigma^*\rho^*h$ & $\widehat{\ell}_J$ & $\sum_{i=1}^6\widehat{\ell}_i$ & $\sum_{i=1}^6\widehat e_i$ & $\widehat e_7$ & $\sum_{i=1}^6g_i$ \\
		\hline 
		$\widehat F_R$ & $0$  & $8f$ & $6f$ & $0$ & $\xi-4f$ & $-2\xi+6f$ & $0$ & $\xi-6f$\\ 
		\hline 
		$\widehat F_C$ & $f$ & $0$ & $f$ & $0$ & $6(\xi-f)$ & $0$ & $-\xi$ & $0$	\\
		\hline 	 
		$G$ & $0$ & $8f$ & $6f$ & $\xi-2f$ & $0$ & $\xi-4f$ & $0$ & $-\xi$\\
		\hline 
	\end{tabular}
\end{center}

Consider the class
\[\gamma\deq \sigma^*\beta-G= \sigma^*(2\rho^*\pi^*f+12\rho^*\pi^*q^*\theta-18\rho^*E-2F_R-16F_C)-G.\]
We compute $\gamma|_{\widehat J_c}=0$, $\gamma|_{\widehat Y}=0$, $\gamma|_{\widehat E}=2\widehat{\ell}_J+\sum_{i=1}^6g_i+16\sigma^*(\pi^*h-e_7)$, $\gamma|_{\widehat F_C}=0$. These are all nef. Furthermore $\gamma|_{\widehat F_R}=\xi-6f$ and $\gamma|_{G}=\xi-4f$ are effective, but are represented by the curve 
\[\widehat R\deq\widehat F_R\cap G\]
which is negative in both $\widehat F_R$ and in $G$. 

\subsubsection{The fourth blow-up}Consider a further blow-up
\[\nu:\check{X}\to\widehat{X}\]
of $\widehat R$ with exceptional divisor $N$. The intersections $\widehat F_R\cdot\widehat R=-2$ and $G\cdot\widehat R=-2$ lead $N\simeq\bb P(\omega_D^{\oplus 2})\simeq D\times\bb P^1$ and $-N|_N$ is the relative $\cal O(1)$ for the projectivization. 

The strict transform $\check E$ is the blow-up of $\widehat E$ in the 6 points $\widehat e_i\cap g_i$. Denote the exceptional lines by $n_i$. The surfaces $\widehat J_c$, $\widehat Y$, $\widehat F_R$, $\widehat F_C$, and $G$ are isomorphic to their strict transforms in $\check X$. We have $\check F_R=\nu^*\widehat F_R-N$ and $\check G=\nu^*G-N$, while the other strict transforms represent the pullbacks. For restrictions we have the following:

\begin{center}
	\begin{tabular}{|c|c|c|c|c|c|c|c|c|c|}
		\hline 
		& $f$& $\theta$& $\check E$& $\check J_c$ & $\check Y$ & $\check F_R$& $\check F_C$& $\check G$& $N$\\
		\hline\hline 
		$\check J_c$ & $0$ & $\pi^*\theta$  & $E$ & $-E-2\overline R$ & $0$ & $0$ & $0$ & $\overline R$ & $0$ \\
		\hline 
		$\check Y$ & $\pi^*f_C$ & $8\pi^*f_D$ & $\sum_{i=1}^6E_i$ & $0$ & $-\overline D_c-4\pi^*f_D$ & $\overline D_c$ & $\sum_{i=1}^6\overline f_{D,i}$ & $0$ & $0$ \\
		\hline 
		$\check E$ & $0$ & $0$ & $-\nu^*\sigma^*\rho^*h$ & $\check{\ell}_J$ & $\sum_{i=1}^6\check{\ell}_i$ & $\sum_{i=1}^6\check e_i$ & $\check e_7$ & $\sum_{i=1}^6\check g_i$ & $\sum_{i=1}^6n_i$\\
		\hline 
		$\check F_R$ & $0$  & $8f$ & $6f$ & $0$ & $\xi-4f$ & $-3\xi+12f$ & $0$ & $0$& $\xi-6f$\\ 
		\hline 
		$\check F_C$ & $f$ & $0$ & $f$ & $0$ & $6(\xi-f)$ & $0$ & $-\xi$ & $0$& $0$	\\
		\hline 	 
		$\check G$ & $0$ & $8f$ & $6f$ & $\xi-2f$ & $0$ & $0$ & $0$ & $-2\xi+4f$& $\xi-4f$\\
		\hline 
		$N$& $0$ &  $8f$ & $6f$ & $0$ & $0$ & $\xi-2f$& $0$& $\xi-2f$& $-\xi$\\
		\hline
	\end{tabular}
\end{center}

Now consider the class 
\[\eta=\nu^*\gamma-N.\]
We compute $\eta|_{\check J_c}=0$, $\eta|_{\check Y}=0$, $\eta|_{\check F_C}=0$, $\eta|_{\check F_R}=0$, $\eta|_{\check G}=0$, $\eta|_N=\xi+(\gamma\cdot \widehat R)f=\xi-2f$ and $\eta|_{\check E}=2\check{\ell}_J+\sum_{i=1}^6\check g_i+16\nu^*\sigma^*(\pi^*h-e_7)$. These are all nef, thus $\eta$ is nef. Furthermore $\eta$ is zero on $\check J_c$, $\check Y$, $\check F_R$, $\check F_C$, $\check G$, and not big on $N$. These imply that $\eta$ is the positive nef part of a birational Fujita--Zariski decomposition of $\alpha$, and also that $P_{\sigma}(\rho^*\alpha)=\beta$ and $P_{\sigma}(\sigma^*\rho^*\eta)=\gamma$. Remarkably, this process works for all $L_{s,t}$.

\begin{theorem}\label{thm:JxCfinalform}
	With notation as above, consider $0<s<1$ and $0\leq t\leq 1+\frac 12s$ so that $L_{s,t}=s\pi^*q^*\theta+(1-s)\pi^*f-tE$ is pseudoeffective. Then 
\begin{enumerate}
	\item $L_{s,t}$ admits a birational Fujita--Zariski decomposition on $\check X$ with nef positive part
	\begin{align*}
		P_{s,t}&\deq j_{s,t}\check J_c+y_{s,t}\check Y+e_{s,t}\check E+r_{s,t}\check F_R+c_{s,t}\check F_c+g_{s,t}\check G+n_{s,t}N\\
		& = \nu^*\biggl(\sigma^*\biggl(\rho^*P_{\sigma}(L_{s,t})-(j+y-r)F_R-(6y-c)F_C\biggr)-\frac12(j+y-r)G\biggr)-\frac 12(j+y-r)N
	\end{align*}
	where if we denote $e=e_{s,t}$, etc., then
	\begin{align*}
		e&\deq 1+\frac 12s-t & 	j&\deq \min\{1-s,\ e\}\\ 	 
		y&\deq \frac 32\min\{\frac 16s,\ e\} & 
		r&\deq\min\{j+y,\ \frac 13y+e\}\\
		c&\deq\min\{6y,\ e\} &
		g&\deq\min\{j+r,\ \frac 12j+\frac 32e\}=\frac 12(j-y+3r)\\
		n&\deq\min\{r+g,\ 3e\}=-y+3r=\min\{3j+2y,3e\}
	\end{align*}
\smallskip

\item $\vol(L_{s,t})=(P_{s,t}^3)=96jy^2-(j+6y-e)^3+9(j+y-r)^2(j+2r-3e)-(6y-c)^2(3e-6y-2c)$.
\smallskip

\item For fixed $0<s<1$, we have 
\[\inob{x}{L_s}=\biggl\{(t,x,z)\ \mid\ 
	0\leq x\leq 1-s,\ 
	0\leq z\leq {\rm min}\bigl\{9e-\frac{81}8x,\ \frac 32s-\frac 98x,\ 1+\frac 12s-e-x,\ 1-s+8e-10x\bigr\}\biggr\}\]
For example 
\[\nob{x}{L_{\frac 12}}=\textup{convex hull}\bigl((0,0,0),\ (\frac 12,\frac 12,0),\ (\frac 34, 0,\frac 34),\ (\frac 54,0,0),\  (\frac{29}{42},\frac{10}{21},\frac 34),\ (\frac 76,0,\frac 34),\ (\frac 23,\frac 12,\frac 16)\bigr)\]	
When we glue all $\nob{x}{L_s}$ as $s$ ranges in $(0,1)$, we obtain the 4D polytope in $(s,t,x,z)$ coordinate space 
\[\textup{convex hull}\bigl((0,0,0,0),\ (1,0,0,0),\ (0,1,0,0),\ (1,\frac 32,0,0),\ (1,\frac 43,0,\frac 43),\ (\frac 37,\frac 4{7},\frac 47,0),\ 
	(\frac 67,\frac 97,0,\frac 97)\bigr)\]
\end{enumerate}	
	\end{theorem}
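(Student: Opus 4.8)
The plan is to build on Proposition~\ref{prop:firstCxJ}, which already records the Nakayama $\sigma$-decomposition $P_{\sigma}(L_{s,t})=j\overline J_c+y\overline Y+eE$ on $\overline X$, and on the explicit geometry of the tower $\overline X\leftarrow\widetilde X\leftarrow\widehat X\leftarrow\check X$, with composite $\Phi:\check X\to\overline X$, together with the restriction tables already assembled and tested on the single class $\alpha=2\pi^*f+3\overline Y$.

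\textbf{Part (1).} First I would record that the seven quantities $e,j,y,r,c,g,n$ are $\geq 0$ for all $0\le t\le 1+\frac12 s$; this is a short case check, simplified by noting that $j+y-r=(j+\frac23 y-e)_+$ and $6y-c=(6y-e)_+$, so the coefficients subtracted off in the formula match the shape of the negative parts on $\overline X$ in Proposition~\ref{prop:firstCxJ}(5). Next I would pin down the negative parts along the tower: since $P_{\sigma}(L_{s,t})$ is movable, $N_{\sigma}$ of each successive pullback is supported only on the exceptional divisors; restricting $\rho^*P_{\sigma}(L_{s,t})-uF_R-vF_C$ to the ruled surfaces $F_R\cong\bb P(\omega_D^{\otimes 2}\oplus\omega_D^{\otimes 3})$ and $F_C\cong C\times\bb P^1$ and using the ruled-surface lemma above forces $u\ge j+y-r$ and $v\ge 6y-c$, and restricting the next pullbacks to $G$ and to $N$ forces the further coefficient $\frac12(j+y-r)$ subtracted at each of those two steps; together with \S\ref{subsec:Nakayama}(3) this bounds $P_{\sigma}(\Phi^*L_{s,t})$ above by $P_{s,t}$. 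The reverse inequality, which is the crux, I would get by proving directly that $P_{s,t}$ is nef via the test of \S\ref{subsec:neftest}: write $P_{s,t}=j\check J_c+y\check Y+e\check E+r\check F_R+c\check F_C+g\check G+nN$, a nonnegative combination of prime divisors, and check that its restriction to a resolution of each of these seven is nef. These restrictions are read off from the big tables after the two extra point-blow-ups on $\check E$: on $\check J_c\cong{\rm Bl}_oJ_2$ via Proposition~\ref{prop:genus2surface}; on $\check Y\cong Z$ via the disjoint negative curves $\overline D_c,\overline f_{D,i}$ and Lemma~\ref{lem:twoelliptic}; on each ruled surface $\check F_R,\check F_C,\check G,N$ via the ruled-surface lemma; and on the rational surface $\check E$ by pairing with its finitely many negative curves. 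In every case the intersections with the generators of the relevant Mori cone are manifestly nonnegative thanks to the $\min$-definitions of $r,c,g,n$, exactly as for $\alpha$ above. Since $P_{s,t}$ nef is in particular movable and $\Phi^*L_{s,t}-P_{s,t}$ is visibly effective (it is $\Phi^*N_{\sigma}(L_{s,t})$ plus the exceptional corrections in the formula), \S\ref{subsec:Nakayama}(1) gives $P_{s,t}\le P_{\sigma}(\Phi^*L_{s,t})$, and Part (1) follows. The striking point, which should come out uniformly in $(s,t)$, is that no additional blow-up is needed -- in line with the heuristic that the process terminates once the centres have semistable normal bundle, as $\widehat R$ does.

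\textbf{Part (2).} Since $P_{s,t}$ is the nef positive part of a birational Fujita--Zariski decomposition, $\vol(L_{s,t})=\vol(P_{s,t})=(P_{s,t}^3)$. I would expand the cube of the formula for $P_{s,t}$ by the projection formula along the tower, using $(E^3)=1$ on $\overline X$ together with $\rho_*(F_R^2)=-\overline R_c$, $\rho_*(F_C^2)=-\overline C_o$ and the self-intersection numbers of $F_R,F_C,G,N$ coming from their projective-bundle descriptions over $\overline R_c$ and $\overline C_o$. Then $96jy^2-(j+6y-e)^3$ is exactly $(P_{\sigma}(L_{s,t})^3)$ computed on $\overline X$ (from $\overline J_c\equiv\pi^*f-E$, $\overline Y\equiv 4\pi^*q^*\theta-6E$ and $(\theta^2)=2$), while $9(j+y-r)^2(j+2r-3e)$ and $-(6y-c)^2(3e-6y-2c)$ are the corrections coming respectively from the chain of blow-ups over $\overline R_c$ (entering to second and third order through $F_R,G,N$) and from the single blow-up over the disjoint curve $\overline C_o$. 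This is routine if lengthy bookkeeping.

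\textbf{Part (3).} I would apply Proposition~\ref{prop:iNOthreefold} with $\Phi$ in the role of the birational map and $\check E$ in the role of $\widetilde E$; the hypothesis $x\notin{\bf B}_-(L_s)$ is automatic since $L_s$ is ample, and since $\check E=\Phi^*E$ we have $-\check E|_{\check E}=h'$, the pullback of the line class on $E\cong\bb P^2$. Hypothesis (1) is Part (1), and hypothesis (2) -- that $P_{s,t}|_{\check E}$ is big for $0<t<1+\frac12 s$ -- follows by checking $(P_{s,t}|_{\check E}\cdot\gamma)>0$ for the generators $\gamma$ of $\Nef(\check E)$ furnished by Lemma~\ref{lem:P2blow7}. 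Then $\inob{x}{L_s}$ is the region below the graph of $(t,x)\mapsto\bigl(P_{\sigma}\bigl((P_{s,t}+x\check E)|_{\check E}\bigr)\cdot h'\bigr)$ over the locus where $(P_{s,t}+x\check E)|_{\check E}=P_{s,t}|_{\check E}-xh'$ is big. This last step is the main obstacle: in contrast with the toric del Pezzo surface in Theorem~\ref{thm:CCCfinalform}, $\check E$ is $\bb P^2$ blown up in the special $7+6+6$ configuration of Lemma~\ref{lem:P2blow7}, so controlling its Zariski decompositions genuinely requires that lemma. Using it I would list the finitely many negative curves of $\check E$ (the strict transforms $\check\ell_J,\check\ell_i$ and the exceptional lines $\check e_i,\check e_7,\check g_i,n_i$), decompose $P_{s,t}|_{\check E}-xh'$ by peeling off the negative curves against which it pairs negatively, and read off the positive part's intersection with $h'$; the sign conditions in $(s,t,x)$ selecting which curves enter yield the four linear upper bounds $9e-\frac{81}8x$, $\frac32 s-\frac98 x$, $1+\frac12 s-e-x$, $1-s+8e-10x$ on $z$, while the bigness range for $x$ is the slab $0\le x\le 1-s$. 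Finally, as in the proofs of Theorems~\ref{thm:CP} and~\ref{thm:CCCfinalform}, I would pass these inequalities through a convex-hull routine (\cite{Polymake}) to obtain the seven-vertex description of $\nob{x}{L_{\frac 12}}$ and, after gluing over $0<s<1$, the seven-vertex $4$-dimensional polytope, and I would check consistency by verifying that the Euclidean volume of each slice $\inob{x}{L_s}$ equals $\frac16\vol(L_s)$ from Part (2).
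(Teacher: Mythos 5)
Your proposal follows essentially the same route as the paper: in (1) you force lower bounds on the exceptional coefficients by restricting successive pullbacks of the positive part to the ruled surfaces $F_R,F_C,G,N$ and then close the loop by checking nefness of $P_{s,t}$ on each of its seven components via \S\ref{subsec:neftest} (the paper organizes this through the identities $g=\frac12(j-y+3r)$, $n=-y+3r$, which streamline several restriction computations and are worth recording explicitly); in (2) you compute $(P_{s,t}^3)$ by pushing down the tower; in (3) you invoke Proposition~\ref{prop:iNOthreefold} and reduce to Zariski decompositions on $\check E$ controlled by Lemma~\ref{lem:P2blow7}. The only tactical divergence is the bigness of $P_{s,t}|_{\check E}$, which you test directly against the nef generators of Lemma~\ref{lem:P2blow7}.(4)--(5) while the paper gives a short contradiction argument; both are valid, and the remainder of the computation (peeling off the negative curves $\check L,\check L_i,\check E_i,\check G_i$, which in the paper amounts to solving a small linear system for the coefficients of $N_{\sigma}$ and yields the quantities $a,b$) is exactly what you describe.
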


Figures \ref{fig:CxJ37}, \ref{fig:CxJ12} and \ref{fig:CxJ67} show some examples.

\begin{figure}
	\begin{minipage}{.33\textwidth}
		\centering
		\includegraphics[width=.9\linewidth]{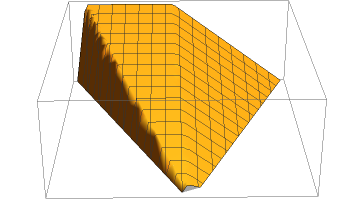}
		\captionof{figure}{$s=\frac 37$}
		\label{fig:CxJ37}
	\end{minipage}%
	\begin{minipage}{.33\textwidth}
	\centering
	\includegraphics[width=.9\linewidth]{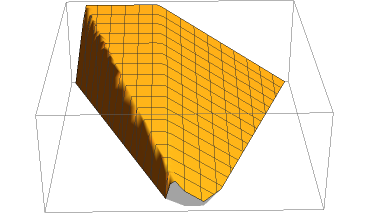}
	\captionof{figure}{$s=\frac 12$}
	\label{fig:CxJ12}
\end{minipage}%
	\begin{minipage}{.33\textwidth}
		\centering
		\includegraphics[width=.9\linewidth]{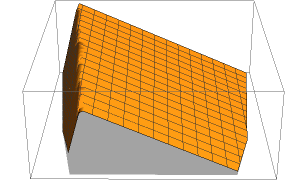}
		\captionof{figure}{$s=\frac 67$}
		\label{fig:CxJ67}
	\end{minipage}
\end{figure}

Before the proof, we first explain how the coefficients were predicted in (1). The pushforward of $P_{s,t}$ to $\overline X$ is $j\overline J_c+y\overline Y+eE$, which equals $P_{\sigma}(L_{s,t})$ by Proposition \ref{prop:firstCxJ}.(5). We have $\rho^*P_{\sigma}(L_{s,t})=j\widetilde J_c+y\widetilde Y+e\widetilde E+(j+y)F_R+6yF_C$. The smallest nonnegative $u$ such that $(\rho^*P_{\sigma}(L_{s,t})-uF_R)|_{F_R}=u\xi+(P_{\sigma}(L_{s,t})\cdot\overline R_c)\cdotp f=u\xi-(6j+4y-6e)f$ is pseudoeffective is $\frac 16(6j+4y-6e)_+=(j+\frac 23y-e)_+=j+y-r$. We deduce that $\sigma_{F_R}(\rho^*P_{\sigma}(L_{s,t}))\geq j+y-r$. We guess that  equality holds. The coefficient of $F_R$ in $P_{\sigma}(\rho^*P_{\sigma}(L_{s,t}))$ would then indeed be be $r$. 

The other coefficients were guessed similarly: For every blow-up $\theta:S'\to S$ with exceptional divisor $E$, we take our candidate positive part $P$ on $S$, then consider on $S'$ the class $\theta^*P-uE$ with smallest possible nonnegative $u$ such that $(\theta^*P-uE)|_E$ is pseudoeffective. 
We can compute $u$ since we understand restrictions to $F_C$, $G$, $N$, and have explicit descriptions of the effective cones of these ruled surfaces.

\begin{proof} (1) 
	The equalities $g=\frac12(j-y+3r)$ and $n=-y+3r$ are elementary verifications. They are consequences of the following identity: If $C=\min\{A,B\}$, then for all $0< t\leq 1$ we have $C=\min\{tA+(1-t)C,B\}$. They imply $y-3r+n=0$ and $j-2g+n=0$.
	
The equality of the two claimed expressions of $P_{s,t}$ follows from $n-r-g=-\frac 12(j+y-r)$ and $g-r-j=-\frac 12(j+y-r)$, and from the relations between strict transform and pullback of divisors for each blow-up.	

	We look to apply the strategy in \ref{subsec:neftest}.
We have $P_{s,t}|_{\check F_R}=(y-3r+n)(\xi-4f)+(6e-2n)f=(6e-2n)f$. 
Similarly, $P_{s,t}|_{\check G}=(j-2g+n)(\xi-2f)+(6e-2n)f=(6e-2n)f$. 
Both restrictions are nef ($n\leq 3e$) and not big.

We also have $P_{s,t}|_{\check F_C}=(6y-c)(\xi-f)+(e-c)f$ and $P_{s,t}|_{N}=(r+g-n)(\xi-2f)+(6e-2n)f$. Since $c=\min\{6y,e\}$ and $n=\min\{r+g,3e\}$, the coefficients are nonnegative, and at least one of them is 0 in each restriction. Since neither $\xi-f$ nor $f$ are big on $F_C$ and neither $\xi-2f$ nor $f$ are big on $N$, we deduce that these restrictions are also nef, but not big.

We have $P_{s,t}|_{\check J_c}=(g-2j)\overline R+(e-j)E$. By Proposition \ref{prop:genus2surface}, nefness is tested by intersecting with $\overline J$ and $E$. The necessary conditions are $-4(g-2j)+6(e-j)\geq0$ and $6(g-2j)-(e-j)\geq0$. The first is equivalent to $g\leq\frac 12j+\frac 32e$ (clear). The second is equivalent to $6g-11j-e\geq 0$. By the definition of $g$, it boils down to checking $e\geq j$ (clear), and $6r\geq 5j+e$. By the definition of $r$, the latter boils down to $j+6y\geq e$ (which follows from $1-s+\frac 32s=1+\frac 12s\geq e$), and to $2y+5e\geq 5j$ (clear from $e\geq j$). We conclude that this restriction is also nef. 

The restriction to $\check Y$ is $-y(\overline D_c+4\pi^*f_D)+e\sum_{i=1}^6E_i+r\overline D_c+c\sum_{i=1}^6\overline f_{D,i}=(r-y)\overline D_c+(6c-4y)\pi^*f_D+(e-c)\sum_{i=1}^6E_i$. The coefficients on the right are nonnegative. The nefness of this restriction is tested by intersecting with the negative curves $\overline D_c$ and $E_i$.
We want to check $-6(r-y)+(6c-4y)+6(e-c)\geq0$ and $r-y-(e-c)\geq 0$. The first is $r\leq\frac 13y+e$ (clear). The second is $(r-y)+c\geq e$, equivalently $j+c\geq e$ and $e-\frac 23y+c\geq e$ both hold. The latter is $c\geq\frac 23y$ which is clear from the definition of $c$ and $e\geq\frac 23y$. The former is equivalent to $j+6y\geq e$ (seen before), and $j+e\geq e$ (clear) both holding. We conclude that $P_{s,t}|_{\check Y}$ is nef.

Finally, the restriction to $\check E$ is $j\check{\ell}_J+y\sum_{i=1}^6\check{\ell}_i+r\sum_{i=1}^6\check e_i+c\check e_7+g\sum_{i=1}^6\check g_i+n\sum_{i=1}^6n_i-e\nu^*\sigma^*\rho^*h$. This is nef by Lemma \ref{lem:P2blow7} below, since the curves that test nefness in $\check E$ also live in $\check J_c$, $\check Y$, $\check F_R$, $\check F_C$, $\check G$, or $N$ where we already checked nefness.

Then $P_{s,t}$ is nef. Since it is not big on any of $\check F_R$, $\check F_C$, $\check G$, or $N$, we must have $P_{\sigma}(P_{s,t}+u\check F_R+v\check F_C+w\check G+zN)=P_{s,t}$ for all $u,v,w,z\geq 0$. By the second expression for $P_{s,t}$, we have that $\nu^*\sigma^*\rho^*P_{\sigma}(L_{s,t})-P_{s,t}$ is a nonnegative combination of $\check F_R$, $\check F_C$, $\check G$, and $N$. It follows that
\[P_{s,t}=P_{\sigma}(\nu^*\sigma^*\rho^*P_{\sigma}(L_{s,t}))=P_{\sigma}(\nu^*\sigma^*\rho^*L_{s,t}).\]

(2) We have $\vol(L_{s,t})=\vol(\nu^*\sigma^*\rho^*L_{s,t})=\vol(P_{\sigma}(\nu^*\sigma^*\rho^*L_{s,t}))=\vol(P_{s,t})=(P_{s,t}^3)$. 
Denote 
\[u\deq j+y-r\quad\text{ and }\quad v\deq 6y-c.\]
Denote also by $\widehat P$, $\widetilde P$, $\overline P=P_{\sigma}(L_{s,t})$ the pushforward of $P_{s,t}$ in each of the lower models $\widehat X$, $\widetilde X$, and $\overline X$ of $\check X$. 

We have $(P_{s,t}^3)=(\nu^*\widehat P-\frac 12uN)^3=(\widehat P^3)-\frac 32u(\nu^*\widehat P^2\cdot N)+\frac 34u^2(\nu^*\widehat P\cdot N^2)-\frac 18u^3(N^3)$. Here $(\nu^*\widehat P^2\cdot N)=(\widehat P\cdot\widehat R)f^2=0$ in $N$, and $(\nu^*\widehat P\cdot N^2)=(\widehat P\cdot\widehat R)f\cdot(-\xi)=-(\widehat P\cdot\widehat R)$, while $(N^3)=((-\xi)^2)=4$ computed in $N$. We deduce
\[(P_{s,t}^3)=(\widehat P^3)-\frac 34u^2(\widehat P\cdot\widehat R)-\frac 18u^3(N^3).\]
Similarly $(\widehat P^3)=(\widetilde P^3)-\frac 34u^2(\widetilde P\cdot\widetilde R)-\frac 18u^3(G^3)$ and $(\widetilde P^3)=(\overline P^3)-3u^2(\overline P\cdot\overline R_c)-u^3(F_R^3)-3v^2(\overline P\cdot\overline C_o)-v^3(F_C^3)$. Then
\[(P_{s,t}^3)=(\overline P^3)-\frac 34u^2\bigl((\widehat P\cdot\widehat R)+(\widetilde P\cdot\widetilde R)+4(\overline P\cdot\overline R_c)\bigr)-u^3(\frac 48+\frac 68+10)-3v^2(\overline P\cdot\overline C_0)-2v^3.\]
Now $\overline P=\pi^*(jJ_c+yY)-(j+6y-e)E$, leading to 
\[(\overline P^3)=(jJ_c+yY)^3-(j+6y-e)^3=(jf+4yq^*\theta)^3-(j+6y-e)^3=96jy^2-(j+6y-e)^3.\]
Furthermore 
\[(\overline P\cdot\overline R_c)=((\pi^*(jf+4yq^*\theta)-(j+6y-e)E)\cdot\overline R_c)=32y-6(j+6y-e)=-(6j+4y-6e).\]
Then $(\widetilde P\cdot\widetilde R)=(\rho^*\overline P-uF_R-vF_C)\cdot\widetilde R=(\overline P\cdot\overline R_c)-((uF_R+vF_C)\cdot \widetilde J_c\cdot F_R)=-(6j+4y-6e)-u(F_R|_{\widetilde J_c}^2)=-(6j+4y-6e)+4u$. And $(\widehat P\cdot\widehat R)=(\sigma^*\widetilde P-\frac 12uG)\cdot\widehat R=(\widetilde P\cdot\widetilde R)-\frac 12u(G\cdot \widehat F_R\cdot G)=(\widetilde P\cdot\widetilde R)-\frac 12u(G|_{\widehat F_R}^2)=-(6j+4y-6e)+4u+u$. Finally, $(\overline P\cdot\overline C_o)=(\pi^*(jJ_c+yY)-(j+6y-e)E)\cdot\overline C_o=e-6y$. In conclusion,
$(P_{s,t}^3)=96jy^2-(j+6y-e)^3-\frac 34(j+y-r)^2(-6(6j+4y-6e)+9(j+y-r))-\frac{45}4(j+y-r)^3-3(6y-c)^2(e-6y)-2(6y-c)^3$, which equals the claimed volume formula.
\smallskip

(3) The plan is to apply Proposition \ref{prop:iNOthreefold}. 

\underline{Claim 1}: If $0<t<1+\frac 12s$, then $P_{s,t}|_{\check E}$ is big. Together with part (1), this verifies the hypotheses of Proposition \ref{prop:iNOthreefold}.

If the nef $P_{s,t}$ is not big on $\check E$, since it was also not big on $\check F_R$, $\check F_C$, $\check G$, $N$, then as in (1) we obtain that $P_{s,t}$ is the positive part of any divisor of form $P_{s,t}+F$ where $F$ is effective supported on $\check E\cup\check F_R\cup\check F_C\cup\check G\cup N$. By the construction of $P_{s,t}$, such a divisor is $\nu^*\sigma^*\rho^*P_{\sigma}(L_{s,t})$ (even without using $\check E$). When we also use $\check E$, the nef divisor $\nu^*\sigma^*\rho^*\pi^*(jf+yq^*4\theta)$ is also of this form. By the uniqueness of the $\sigma$-Zariski decomposition, we deduce $P_{s,t}=\nu^*\sigma^*\rho^*\pi^*(jf+yq^*4\theta)$. Furthermore, $P_{\sigma}(L_{s,t})$ equals $\pi^*(jf+4yq^*\theta)=j\overline J+y\overline Y+(j+6y)E$, forcing $j+6y=e=1+\frac 12s-t$. This is equivalent to $\min\{1-s,1+\frac 12s-t\}+\min\{\frac 32s,9(1+\frac 12s-t)\}=1+\frac 12s-t$, which is impossible in the given range for $s,t$.
\smallskip

Next we determine the bounds in the conclusion of Proposition \ref{prop:iNOthreefold}.

\underline{Claim 2}: In the same range for $s$ and $t$, we have
\[\sup\{u\ \mid\ (P_{s,t}+u\check E)|_{\check E}\text{ is big}\}=\min\{j+c-e,\ \frac 89c-\frac 13y+r-e\}.\]

 By Lemma \ref{lem:P2blow7}.(5), the supremum is the largest $u$ value where we have nonnegativity of the intersection numbers with the 8 generating symmetric nef classes in Lemma \ref{lem:P2blow7}.(4). The conditions where $\epsilon=0$ are consequences of the corresponding inequality for $\epsilon=1$, since $\alpha\cdot(-\check E_7)=c-6y\leq 0$. We are left with
\[\begin{cases}
	j+c-e\geq u\\
	11c+6g-12e\geq 12u\\
	6r+5c-6e\geq 6u\\
	16c+6n-18e\geq 18u 
\end{cases}\] 
One verifies that the fourth condition implies the third, while the first and fourth conditions imply the second. The conclusion follows.
\smallskip

\underline{Claim 3}: For $0<t<1+\frac 12s$ and $0\leq u\leq\min\{j+c-e,\ \frac 89c-\frac 13y+r-e\}$, we have
\[(P_{\sigma}((P_{s,t}+u\check E)|_{\check E})\cdot(-\check E|_{\check E}))=j+6y-e-u-\frac{11}8a-9b\ ,\]
where
\[a=a(s,t,u)\deq\frac 1{11}(8j+3y-9r+e+u)_+\quad\text{ and }\quad b=b(s,t,u)\deq (y-r-c+e+u)_+\ .\]

Recall that $\alpha\deq (P_{s,t}+u\check E)|_{\check E}=\check{\ell}_J+y\sum_{i=1}^6\check{\ell}_i+r\sum_{i=1}^6\check e_i+c\check e_7+g\sum_{i=1}^6\check g_i+n\sum_{i=1}^6n_i-(e+u)\nu^*\sigma^*\rho^*h$ on $\check E$. With the notation of Lemma \ref{lem:P2blow7}, this is
\[\alpha=j\check L+y\check L_7+r\check E+c\check E_7+g\check G+n N-(e+u)H\]
Since $P_{s,t}|_{\check E}$ is nef, the class $\alpha$ can only be negative on $\check L$ and $\check L_i$.
We compute
\begin{align*}
	(\alpha\cdot\check{L})&=-11j+6g-e-u=-8j-3y+9r-e-u\\
	(\alpha\cdot\check{L}_i)&=-y+r+c-e-u
\end{align*}

Then $\alpha-a\check{\ell}_j-b\sum_{i=1}^6\check{\ell}_i$ necessarily dominates the positive part of the Zariski decomposition of $\alpha$. Its intersections with $\check{L}$ and the $\check{L}_i$ are $-8j-3y+9r-e-u+11a$ and $-y+r+c-e-u+b$ respectively. They are nonnegative. The intersections with $\check E_i$ and $\check G_i$ are $y-b-3r+n=-b$ and respectively $j-a-2g+n=-a$.

If $a$ and $b$ are both 0, then $\alpha$ is nef and $P_{\sigma}(\alpha)=\alpha$ and $(P_{\sigma}(\alpha)\cdot H)=j+6y-e-u$.

If $a$ and $b$ are both positive, we deduce that the support of $N_{\sigma}(\alpha)$ contains $\check L\cup\check L_7\cup\check E\cup\check G$. This is a union of 19 curves with linearly independent classes in the 20 dimensional $N^1(\check E)$. Up to scaling, there is only one nef class that is zero on all of these. This is the symmetric class \[\eta\deq 2\check\ell+16(\nu^*\sigma^*(\rho^*h-e_7))+\sum_{i=1}^6\check g_i\] by the intersection table after Lemma \ref{lem:P2blow7}. Then $P_{\sigma}(\alpha)=z\eta$ for some $z\geq 0$ and $N_{\sigma}(\alpha)$ is of form $M\deq x\check L+x_7\check L_7+x_e\check E+x_g\check G$.
We obtain a linear system of equations for the coefficients. 
\[\begin{cases}
	-11x+6x_g=(M\cdot\check L)=(\alpha\cdot\check L)=-a\\
	x-2x_g=(M\cdot\check G_i)=(\alpha\cdot\check G_i)=0\\
	-x_7+x_e=(M\cdot\check L_i)=(\alpha\cdot\check L_i)=-b\\
	x_7-3x_e=(M\cdot \check E_i)=(\alpha\cdot\check E_i)=0
\end{cases}\]
Note that $(\alpha\cdot\check L)=-a$ and $(\alpha\cdot\check L_i)=-b$ hold only when $(\alpha\cdot\check L)$ and $(\alpha\cdot\check L_i)$ are nonpositive.
Solving the system gives $M=\frac{11}8a\check L+\frac 32b\check L_7+\frac 12b\check E+\frac{11}{16}a\check G$. 
Then $32z=z\cdotp(\eta^2)=(\alpha\cdot\eta)=16c+6n-18(e+u)\geq 0$. We used that $\alpha$ is pseudoeffective and $\eta$ is nef and orthogonal to the components of $M$. Then $(P_{\sigma}(\alpha)\cdot H)=(\alpha\cdot H)-\frac {11}8a-6\cdot\frac 32b=(\alpha\cdot H)-\frac{11}8a-9b$.

If $a=0$ and $b>0$, then as in the previous case the negative part contains $\check L_7+\check E$. We claim that the negative part of the Zariski decomposition is $M\deq\frac 32b\check L_7+\frac 12b\check E$. 
Let $\eta\deq\alpha-M$. Then its intersection with:
\begin{itemize}
	\item $\check L$ is $-11j+6g-e-u\geq 0$ by the assumption that $a=0$.
	\item $\check L_i$ is $(-y+r+c-e-u)-(-\frac 32b+\frac 12b)=-b+\frac 32b-\frac 12b=0$ by the definition of $b$ and the assumption $b>0$.
	\item $\check E_i$ is $(y-3r+n)-(\frac 32b-3\frac 12b)=0$.
	\item $\check G_i$ is $(j-2g+n)-0=0$.
	\item $\check E_7$ is $(-c+6y)-6\cdot\frac 32b=-c+6y-9b=-c+6y-9(y-r-c+e+u)=8c-3y+9r-9e-9u$. This is nonnegative by (4).
	\item $N_i$ is $(g+r-n)-\frac 12b=g+r-n-\frac 12(y-r-c+e+u)=\frac 12(j+c-e-u)\geq 0$ by (4). 
\end{itemize}
The claim follows. We also have $(P_{\sigma}(\alpha)\cdot H)=(\alpha\cdot H)-6\cdot\frac 32b=(\alpha\cdot H)-9b$.

When $a>0$ and $b=0$, we similarly prove that $N_{\sigma}(\alpha)=\frac{11}8a\check L+\frac{11}{16}a\check G$.
The nontrivial check is that $(\alpha-\frac{11}8a\check L-\frac{11}{16}a\check G)\cdot N_i=(g+r-n)-\frac{11}{16}a\geq0$. It boils down to $5y+r\geq e+u$ which follows from $5y\geq\frac 56c$ and the third inequality in the proof of Claim 2. Then $(P_{\sigma}(\alpha)\cdot H)=(\alpha\cdot H)-\frac{11}8a$.
\smallskip

From Proposition \ref{prop:iNOthreefold} we deduce that for fixed $0<s<1$ the body $\inob{x}{L_s}$ is determined in $(t,x,z)$ coordinates by the inequalities
\[\begin{cases}
	0\leq t\leq 1+\frac 12s\\
	0\leq x\leq \min\{j+c-e,\ \frac 89c-\frac 13y+r-e\}\\
	0\leq z\leq j+6y-e-x-\frac{11}8a-9b
\end{cases}\]
The passage from these inequalities to those in the conclusion of (3) starts off as a brute force argument where the piece-wise linear functions $j,y$, $\frac 13 n= r-\frac 13y=\min\{j+\frac 23y,e\}$, $c$, $-a,-b$ are minima between linear functions. After careful manipulation they never appear with a negative sign in our formulas, and then each branch determines a half-space cutting out $\inob{x}{L_s}$. For example one of the branches of the upper bound on $z$ is $j+6y-e-x-\frac{1}8(8j-3n+e+x)-9(y-r-c+e+x)=\frac{21}8n+9c-\frac{81}{10}(e+x)$. The branch functions $n$ and $c$ appear with nonnegative coefficient.

We eliminate $a,b,c,n,y,j$ one by one in this order by replacing them with their branches, and then remove redundant inequalities. This leaves us with a small number of inequalities that we plug into \cite{Polymake} to produce the minimal set of defining inequalities in the conclusion of (3). Polymake also computed the vertices of the 4D body. Furthermore it computes that its volume is $\frac 1{12}$ which agrees with $\int_0^1\vol_{\bb R^3}\inob{x}{L_s}ds=\int_0^1\frac 16(L_s^3)ds=\int_0^1s^2(1-s)ds$ as expected.
\end{proof}

\begin{lemma}\label{lem:P2blow7}
	Let $p_1,\ldots, p_6$ be points on a line $L$ in $X=\bb P^2$. Let $p_7\in\bb P^2$ not on $L$. For $1\leq i\leq 6$, let $L_i$ be the line joining $p_7$ and $p_i$. Consider the following sequence of blow-ups:
	\begin{itemize}
		\item $\rho:\widetilde{X}\to\bb P^2$ is the blow-up of the 7 points $p_i$ with exceptional divisors $E_i$.
		\smallskip
		
		\item $\sigma:\widehat X\to\widetilde X$ is the further blow-up of the 6 points $E_i\cap\widetilde L$ with exceptional divisors $G_i$.
\smallskip

		\item $\nu:\check X\to\widehat X$ is the blow-up of the 6 points $G_i\cap\widehat E_i$ with exceptional divisors $N_i$. 
	\end{itemize} 
\begin{enumerate}
	\item There is a natural action of $\frak S_6$ on $N^1(\check X)$ that respects the intersection pairing. The invariant subspace has a basis given by $\check L$, $\check E\deq\sum_{i=1}^6\check E_i$, $\check E_7$, $\check G\deq\sum_{i=1}^6\check G_i$, and $N\deq\sum_{i=1}^6N_i$. For a curve $C$ in $\bb P^2$, in $\widetilde X$, or $\widehat X$, we denoted by $\check C$ the strict transform in $\check X$.
\smallskip

	\item A $\frak S_6$-symmetric class $\alpha$ is nef iff the intersection numbers 
	\[\alpha\cdot\check L,\quad \alpha\cdot\check L_1,\quad \alpha\cdot\check E_1,\quad \alpha\cdot\check E_7,\quad \alpha\cdot\check G_1,\quad\alpha\cdot N_1\] are nonnegative.
\smallskip

	\item The $\frak S_6$-symmetric slice of the pseudoeffective cone of $\check X$ is generated by 
	\[\check L,\quad\check L_7\deq\sum\nolimits_{i=1}^6\check L_i,\quad \check E,\quad\check E_7,\quad\check G,\quad N.\]
	In particular every symmetric pseudoeffective class is effective. 
\smallskip

	\item Denote by $H$ the pullback of the line class. The generators of the $\frak S_6$-symmetric slice of $\Nef(\check X)$ are the 8 classes 
	\[\begin{cases}H-\epsilon\check E_7\\
		\check L+11(H-\epsilon\check E_7)\\
		\check L+5(H-\epsilon\check E_7)+\check G+N\\
		2\check L+16(H-\epsilon\check E_7)+\check G
	\end{cases}\]	
	where $\epsilon\in\{0,1\}$.
\smallskip

	\item A $\frak S_6$-symmetric class $\alpha$ is pseudoeffective iff its intersection numbers with the 8 generators of the symmetric slice of the nef cone are nonnegative.
\end{enumerate}
\end{lemma}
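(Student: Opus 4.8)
The plan is to establish part (3) first, since (2) and (5) follow from it by duality, and then to carry out the explicit cone computation in (4).

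For (1), work with the orthogonal basis of $N^1(\check X)$ given by $H$ together with the total transforms of the nineteen exceptional divisors $E_1,\dots,E_7$, $G_1,\dots,G_6$, $N_1,\dots,N_6$, so that $\langle H,H\rangle=1$ and every exceptional class has square $-1$. The $\frak S_6$-action permutes the index $i\in\{1,\dots,6\}$ simultaneously on $E_i$, $G_i$, $N_i$ and fixes $H$ and $E_7$; it is therefore an isometry, and orbit-counting on this basis shows the invariant subspace is $5$-dimensional, spanned by $H$, $E_7$, $\sum_iE_i$, $\sum_iG_i$, $\sum_iN_i$. Writing the strict transforms in these coordinates — $\check L=H-\sum_{i=1}^6E_i-\sum_iG_i$ (the third-level centre $G_i\cap\widehat E_i$ does not lie on $\widehat L$, since on $G_i$ it is a point different from $\widehat L\cap G_i$, the two lines $E_i$ and $\widetilde L$ having met transversally), $\check E_i=E_i-G_i-N_i$, $\check E_7=E_7$, $\check G_i=G_i-N_i$, $N_i=N_i$ — exhibits $\check L,\check E,\check E_7,\check G,N$ as an invertible substitution of the previous generators, hence a basis of the invariant subspace.

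For (3), recall that on a smooth projective surface $\Eff(\check X)=\overline{NE}(\check X)$, so only the $\frak S_6$-invariant slice of the Mori cone is at stake. The six classes $\check L$, $\check L_7:=\sum_i\check L_i$, $\check E:=\sum_i\check E_i$, $\check E_7$, $\check G:=\sum_i\check G_i$, $N:=\sum_iN_i$ are effective, which gives one inclusion. For the other, let $\alpha$ be an invariant pseudoeffective class; writing $\alpha$ as a limit of nonnegative combinations of irreducible curves and applying the averaging projector $\pi$ onto the invariant subspace (which fixes $\alpha$ and satisfies $\alpha\cdot C=\alpha\cdot\pi C$ for every curve $C$), it suffices to prove that $\pi[\check C]$ lies in the cone generated by the six classes for every irreducible curve $\check C$. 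When $\check C$ is one of $\check L,\check L_i,\check E_i,\check E_7,\check G_i,N_i$ this is immediate. Otherwise $\check C$ is the strict transform of an irreducible plane curve $C_0\neq L,L_i$ of degree $d$, with multiplicities $m_i$ at $p_i$, $m_7$ at $p_7$, and $k_i,n_i$ at the infinitely near centres $E_i\cap\widetilde L$ and $G_i\cap\widehat E_i$. The inequalities $\check C\cdot\check E_i,\check C\cdot\check G_i,\check C\cdot N_i,\check C\cdot\check E_7\ge0$ say $m_i\ge k_i+n_i$, $k_i\ge n_i\ge0$, $m_7\ge0$; B\'ezout of $C_0$ with $L$ gives $d\ge\sum_{i=1}^6(m_i+k_i)$, since a tangency of $C_0$ and $L$ at $p_i$ survives to the first infinitely near centre, which lies on $\widetilde L$, but not to the second, which does not; and B\'ezout with each $L_j$ gives $d\ge m_j+m_7$. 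Averaging the indices, $\pi[\check C]$ lies in the polyhedral cone defined by $d\ge6(m+k)$, $d\ge m+m_7$, $m\ge k+n$, $k\ge n\ge0$, $m_7\ge0$, and a finite check — or \cite{Polymake} — shows this cone is contained in the one spanned by $\check L,\check L_7,\check E,\check E_7,\check G,N$.

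Parts (2), (5) and (4) are then formal together with a finite convexity computation. On a surface $\Eff$ and $\Nef$ are mutually dual cones, so an invariant $\alpha$ is nef iff it pairs nonnegatively with the generators of the invariant slice of the Mori cone found in (3); dividing the sum-classes $\check L_7,\check E,\check G,N$ by $6$ turns these into the six test intersections of (2). Since $\frak S_6$ acts by isometries on the nondegenerate lattice $N^1(\check X)$, the intersection form restricts nondegenerately to the invariant subspace, so within this subspace biduality gives (5): a class pairs nonnegatively with the eight generators of the invariant nef cone of (4) iff it lies in the double dual of the invariant Mori cone, that is in $\Eff(\check X)$. Finally for (4) one dualizes, inside the $5$-dimensional invariant subspace, the cone spanned by $\check L,\check L_7,\check E,\check E_7,\check G,N$: each of the eight listed classes pairs nonnegatively with all six generators — hence is nef by (2) — and is the unique ray killed by four of the six facet functionals of the primal cone, so that these eight are exactly the extreme rays of the dual, the doubling $\epsilon\in\{0,1\}$ coming from the trivial $\frak S_6$-summand spanned by $\check E_7$; this step can be done by hand or, as elsewhere in the paper, with \cite{Polymake}. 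The one genuinely geometric point — and the step I expect to be the main obstacle — is the bookkeeping in (3): deciding precisely which infinitely near centres lie on $\widetilde L$, $\widehat L$, $\widetilde L_i$ and $\widehat E_i$, and converting the tangency orders $k_i,n_i$ into the correct B\'ezout inequalities; once those are pinned down, the rest is linear algebra and a finite polyhedral verification.
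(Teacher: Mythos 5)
Your proposal is correct but reverses the logical order the paper uses. The paper proves (2) directly, via the explicit rewriting $\alpha = (a+c)\check L + (-c)(H-\check E_7) + (b+c)\check E + (d+2c)\check G + (e+3c)N$, showing all coefficients are nonnegative under the six test inequalities, so that $\alpha$ is manifestly effective and its nefness reduces to the stated intersection conditions; (3) and (5) then follow by duality in the $5$-dimensional invariant subspace $V$ (where the pairing is nondegenerate of signature $(1,4)$), and (4) is a polyhedral computation. You instead prove (3) first, by averaging irreducible curve classes: the B\'ezout inequalities you derive against $L$ and $L_j$ are exactly $\check C\cdot\check L\ge0$ and $\check C\cdot\check L_j\ge0$, which together with $\check C\cdot\check E_i\ge0$, $\check C\cdot\check G_i\ge0$, $\check C\cdot N_i\ge0$, $\check C\cdot\check E_7\ge0$ place $\pi[\check C]$ in the dual cone $A^\vee$ of the six-generator cone $A$; the remaining step is the finite polyhedral containment $A^\vee\subseteq A$, which does hold (its eight extreme rays are precisely the classes of (4), and each decomposes nonnegatively in the six generators), but which you cite rather than carry out. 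Your subsequent derivation of (2), (4), (5) by duality is sound, including the correct observation that the averaging projector is self-adjoint for the intersection pairing — a crucial point, because the $\frak S_6$-action is not induced by automorphisms of $\check X$, so symmetry of the positive cones is not automatic. The trade-off: your route proves the effective cone description curve-theoretically and directly, at the cost of one extra finite polyhedral verification that the paper's algebraic rewriting absorbs.
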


Figure \ref{fig:P27} shows the configuration of strict transform lines in $\widehat X$ (only $i=1,6$ are shown of the six $1\leq i\leq 6$) and their self-intersections.

\begin{figure}
	\centering
	\includegraphics[scale=0.1]{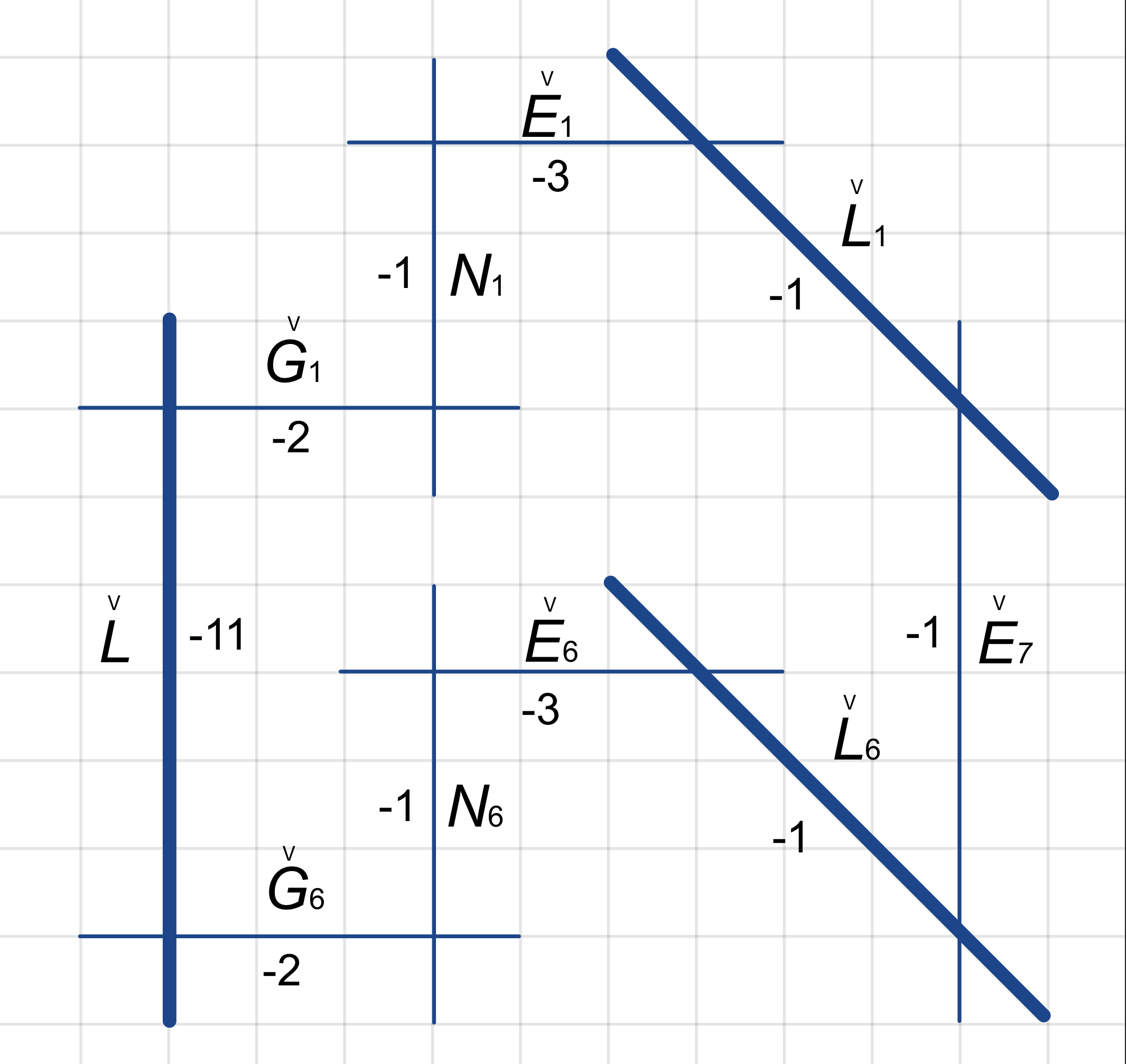}
	\caption{Lines in $\check X$}
	\label{fig:P27}
\end{figure}

\begin{proof} 
(1) $N^1(\check X)$ is $20$-dimensional, generated by $\check L$, and by the $19=1+3\cdot 6$ (strict transforms of) exceptional divisors $\check E_i$ (including $\check E_7$), $\check G_i$, $N_i$. The action permutes the $\check E_i$ (for $1\leq i\leq 6$), $\check G_i$, and respectively $N_i$-coordinates. The other claims of (1) are straightforward. 

Note that the action of $\frak S_6$ on $N^1(\check X)$ is natural, but does not come from obvious automorphisms of $\check X$. In particular it is not clear that the full $20$-dimensional positive cones of $\check X$ are preserved by the action. 

(2)	Only one implication needs proving. Assume that the intersection numbers are nonnegative. For $\alpha=a\check L+b\check E+c\check E_7+d\check G+eN$, the conditions are
\begin{equation}\label{eq:P27ptscond}
\begin{cases}
-11a+6d\geq 0 & b+c\geq 0\\
-3b+e\geq 0 & c\leq 0\\
a-2d+e\geq 0 & b+d-e\geq 0
	\end{cases}
\end{equation} 
We have
\[H=\nu^*\sigma^*\rho^*L=\check L+\check E+2\check G+3N.\]
We can rewrite
\begin{equation}\label{eq:P27form}\alpha=(a+c)\check L+(-c)(H-\check E_7)+(b+c)\check E+(d+2c)\check G+(e+3c)N.
\end{equation}
Note that $H-\check E_7=\nu^*\sigma^*(\rho^*L-E_7)$ is nef.
We claim that the last five inequalities in \eqref{eq:P27ptscond} imply that the coefficients in \eqref{eq:P27form} are all nonnegative. If true, then $\alpha$ is effective. If  $\alpha\cdot\check L$, $\alpha\cdot\check E_1$, $\alpha\cdot\check G_1$, $\alpha\cdot N_1$ are nonnegative, then $\alpha$ is nef.

For the claim, we have $-c\geq 0$ and $b+c\geq 0$ as part of the conditions. Furthermore $e+3c\geq3b+3c\geq 0$, $d+2c\geq e-b+2c\geq 2b+2c\geq 0$, and finally $a+c\geq 2d-e+c=2(d-e)+e+c\geq -2b+e+c\geq b+c\geq 0$.

(3) Let $V$ be the 5-dimensional symmetric subspace of $N^1(\check X)$ from (1). The intersection pairing on $V$ is nondegenerate of signature $(1,4)$.
In (2) we may replace $\check L_1$, $\check E_1$, $\check G_1$, $N_1$ with their symmetrized versions $\check L_7$, $\check E$, $\check G$, $N$ (which are still effective) without changing the conclusion. It follows that when working in $V$, the dual of $\Nef(\check X)\cap V$ is generated by $\check L$, $\check L_7$, $\check E$, $\check E_7$, $G$, $N$, hence it is contained in $\Eff(\check X)\cap V$. Every psuedoeffective class in $V$ is an element of the dual of the cone $\Nef(\check X)\cap V$. The claim follows.

(4). We are looking for the vertices of the cone cut out by the inequalities in (2). This computation can be carried out in \cite{Polymake}. (5) is analogous to (3).
\end{proof}




We also note the intersection numbers:
\begin{center} \begin{tabular}{|l|c|c|c|c|c|c|c|}
	\hline 
	& $\check L$& $\check L_7$& $\check E$& $\check E_7$& $\check G$& $N$& $H$\\
	\hline\hline
	$H$& 1&6&0&0&0&0&1\\
	\hline 
	$H-\check E_7$& 1&0&0&1&0&0&1\\
	\hline 
	$\check L+11H$& 0&66&0&0&6&0&12\\ 
	\hline 
	$\check L+11(H-\check E_7)$&0&0&0&11&6&0&12\\
	\hline 
	$\check L+5H+\check G+N$& 0&30&6&0&0&0&6\\
	\hline
	$\check L+5(H-\check E_7)+\check G+N$& 0&0&6&5&0&0&6\\
	\hline 
	$2\check L+16H+\check G$& 0&96&0&0&0&6&18\\
	\hline
	$2\check L+16(H-\check E_7)+\check G$&0&0&0&16&0&6&18\\
	\hline 
\end{tabular}
\end{center}

\section{An application}

We illustrate the usefulness of the examples in Theorems \ref{thm:CP}, \ref{thm:CCCfinalform} and \ref{thm:JxCfinalform} as testing ground for open questions. Here we disprove a question of \cite{FLgeneralities} that aims to read an invariant of a curve class from a generic iNObody.

In \cite{Ful21}, the first named author introduced a notion of Seshadri constant for a movable curve class at a point. It is a dual notion to the classical Seshadri constant of nef divisors. If $x\in X$ is a smooth point and $C\in{\rm Mov}_1(X)$ is a \emph{movable curve class}, i.e., a class in the closure of the cone generated by irreducible curves that deform in families that cover $X$, then
\[\epsilon(C;x)\deq\sup\{t\geq 0\ \mid\ \pi^*C-t\ell\in{\rm Mov}_1({\rm Bl_xX})\}=\inf_D\frac{C\cdot D}{\mult_xD}\ ,\]
as $D$ ranges through effective Cartier divisors whose support contains $x$, and where $\ell$ is a line in $E$. The last equality is a consequence of \cite{BDPP}. Curve classes obtained as complete intersection of nef divisor classes are movable (see also Lemma \ref{lem:createmovable} below). We have the following interesting connection between the Seshadri constant of complete (self-)intersection curve classes and convex geometry:

\begin{proposition}[{\cite[Theorem 6.2.(3)]{FLgeneralities}}]
	Let $X$ be a complex projective manifold of dimension $n$, let $L$ be an ample divisor on $X$, and $x\in X$ a very general point. Then 
	\begin{equation}\label{eq:conjecture}\epsilon((L^{n-1});x)\geq (n-1)!\cdot{\rm vol}_{\bb R^{n-1}}{\rm pr}_1(\inob{x}{L})\ ,\end{equation} 
	where here ${\rm pr}_1:\bb R^n\to \bb R^{n-1}$ denotes the projection on the last $n-1$ components, i.e., forgetting $\nu_1$.
\end{proposition}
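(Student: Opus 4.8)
The plan is to relate both sides of \eqref{eq:conjecture} to the geometry of the blow-up $\pi:\overline X\to X$ at the very general point $x$, using the two descriptions of $\epsilon((L^{n-1});x)$ recalled above. First I would note that by the valuative construction of the Newton--Okounkov body, ${\rm pr}_1(\inob{x}{L})$ is exactly the Newton--Okounkov body, with respect to the induced flag on $E\simeq\bb P^{n-1}$, of the restricted/limit linear series on $E$ coming from sections of $\pi^*mL$; more precisely, its Euclidean volume records the ``expected'' rate at which sections of $mL$ vanish to high order at $x$. In fact, for the infinitesimal flag $\overline Y_\bullet$ the slice $\inob{x}{L}$ lives inside $\{\nu_1\geq 0\}$ and $(n-1)!\cdot{\rm vol}_{\bb R^{n-1}}{\rm pr}_1(\inob{x}{L})$ can be interpreted, via the surjectivity arguments of \cite{LM09}, as the asymptotic volume of the system cut out on a very general hyperplane of $E$ (equivalently, as $\lim_m \#\{\text{valuation vectors}\}/m^{n-1}$ after forgetting the first coordinate), which is bounded above by $(L^{n-1}\cdot\text{line in }E)$ type quantities but is governed precisely by the moving self-intersection.

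Next I would bring in the curve-class side. By definition $\epsilon((L^{n-1});x)=\sup\{t\ \mid\ \pi^*(L^{n-1})-t\ell\in\Mov_1(\overline X)\}$, and by the \cite{BDPP} duality this equals $\inf_D\frac{(L^{n-1}\cdot D)}{\mult_xD}$ over effective divisors $D\ni x$. The key point is that for a very general $x$, the complete linear system $|mL|$ contains divisors whose multiplicity at $x$ grows like $m\cdot\big((n-1)!\,{\rm vol}_{\bb R^{n-1}}{\rm pr}_1(\inob{x}{L})\big)^{1/(n-1)}$ -- no: rather, one shows the reverse bound directly. I would argue: take $D\equiv mL$ effective through $x$ with $\mult_x D=k$. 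Pulling back, $\pi^*D-kE$ is effective on $\overline X$, so $\pi^*D-tE$ is pseudoeffective for $t\le k$, hence (by the valuative estimate and genericity of the flag, using property (2) of $\sigma$-decompositions and that $x$ is very general so $x\notin{\bf B}_-$) the Newton--Okounkov body $\inob{x}{L}$ is contained in the half-space $\{\nu_1\le k/m\}$ translated appropriately. Therefore its projection ${\rm pr}_1(\inob{x}{L})$ sits inside the projection of that region, and a direct volume comparison (the projection of $\inob{x}{mL}_{\nu_1\le k}$ has $(n-1)$-volume at most $\tfrac{1}{(n-1)!}$ times the ``degree'' of the restricted series, which is at most $(L^{n-1}\cdot D)/\mult_x D\cdot\text{(normalization)}$) yields $(n-1)!\,{\rm vol}_{\bb R^{n-1}}{\rm pr}_1(\inob{x}{L})\le (L^{n-1}\cdot D)/\mult_x D$. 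Taking the infimum over all such $D$ and invoking the \cite{BDPP} formula gives \eqref{eq:conjecture}.

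The cleanest route to the final volume comparison is to use the inclusion of Newton--Okounkov bodies under restriction: for the flag $\overline X\supset E\supset \overline Y_2\supset\cdots$, the slice $\inob{x}{L}\cap\{\nu_1=0\}$ is $\Delta_{\overline Y_\bullet|_E}(\pi^*L|_E)=\Delta_{\overline Y_\bullet|_E}(\text{trivial})$, which is a point; the honest content is that ${\rm pr}_1(\inob{x}{L})$ is contained in the Newton--Okounkov body on a very general line (hyperplane of $E$) of the ``largest'' divisor class $\pi^*L-\epsilon E$ that still carries the sections, and then $(n-1)!\,{\rm vol}$ of this is $\le$ the top self-intersection of that restricted class, which after pushing forward computes to at most $\epsilon((L^{n-1});x)$ by the defining sup. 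So I would (i) fix a very general flag, (ii) show $\inob{x}{L}\subseteq\{\nu_1\le t\}$ forces a containment of projections into a scaled simplex-free region whose volume is controlled by the restricted volume, (iii) identify that restricted volume with $(L^{n-1}\cdot D)/\mult_xD$ in the limit, (iv) optimize over $D$ and apply \cite{BDPP}.

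The main obstacle I anticipate is step (ii)--(iii): making precise the statement that the $(n-1)$-dimensional Euclidean volume of ${\rm pr}_1(\inob{x}{L})$ is bounded by $\epsilon((L^{n-1});x)$ rather than by something larger like the full Seshadri-type constant of $L$ itself. The subtlety is that forgetting $\nu_1$ records the vanishing behavior on $E$ \emph{integrated over all vanishing orders}, and one must see that this integrated quantity does not exceed the single number $(L^{n-1}\cdot D)/\mult_x D$ for the optimal $D$; this requires either a clever slicing/Fubini argument across the $\nu_1$-direction combined with the concavity/monotonicity of $t\mapsto {\rm vol}(\pi^*L-tE)$, or appealing directly to the Borel-fixed shape and the sharp bounds established in \cite{FLgeneralities} that force ${\rm pr}_1(\inob{x}{L})$ to be no bigger than the body attached to the movable curve class $\pi_*(\text{line-slice of }P_\sigma(\pi^*L-\epsilon E))$. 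In any case the inequality is in the ``safe'' direction -- it is an upper bound on a volume -- so once the restricted-volume interpretation is set up, the estimate should follow from the basic properties of Newton--Okounkov bodies in \cite{LM09} together with the \cite{BDPP} characterization of the movable cone, with no further blow-ups needed.
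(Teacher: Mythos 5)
This proposition is cited from \cite{FLgeneralities} (Theorem 6.2.(3) there) and is not proved in the present paper, so there is no in-paper argument to compare your proposal against; I can only evaluate the proposal on its own terms.

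Your proposal contains a concrete mistake. From an effective $D\equiv mL$ through $x$ with $\mult_x D=k$, you correctly observe that $\pi^*D-kE$ is the (effective) strict transform, so $\pi^*D-tE$ is pseudoeffective for $t\le k$. But you then conclude that $\inob{x}{L}\subseteq\{\nu_1\le k/m\}$. This is backwards: what you have shown is that $\pi^*L-(k/m)E$ is pseudoeffective, i.e.\ that the Fujita--Nakayama invariant satisfies $\mu(L;x)\ge k/m$. Since $\inob{x}{L}\subseteq\{0\le\nu_1\le\mu(L;x)\}$ and equality is achieved at the top, the existence of such a $D$ produces points of the body with $\nu_1$ close to $k/m$; it gives a \emph{lower} bound on the maximal $\nu_1$-coordinate, not an upper bound. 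The inclusion you want does not follow and, as stated, is false.

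Beyond that, the heart of the estimate you are after --- that $(n-1)!\,{\rm vol}_{\bb R^{n-1}}\,{\rm pr}_1(\inob{x}{L})\le (L^{n-1}\cdot D)/\mult_x D$ for every effective $D$ through $x$, so that taking the infimum over $D$ and invoking the \cite{BDPP} description of $\epsilon((L^{n-1});x)$ finishes the proof --- is exactly what you label your ``main obstacle'' in steps (ii)--(iii), and the sketch offers no mechanism to carry it out. Appealing generically to ``a clever slicing/Fubini argument combined with concavity/monotonicity'' or ``the Borel-fixed shape'' is asking the reader to supply the entire content of the proposition. In particular, for a general convex body the $(n-1)$-volume of the projection ${\rm pr}_1(\Delta)$ is \emph{not} bounded by any single slice volume, so some structural input (presumably the Borel-fixedness of $\inob{x}{L}$ at a very general $x$, which is why the hypothesis and the citation to \cite{FLgeneralities} are there) has to be used in a precise way to control the projection by a single intersection number. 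That control is the whole result, and it is missing from the proposal. The overall framing --- use the divisor characterization of the movable-curve Seshadri constant, then compare to a convex-geometric quantity attached to the iNObody --- is plausibly the right one, but the argument as written has both a false intermediate inclusion and an unfilled central step.
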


 Note that \cite[Theorem 6.2]{FLgeneralities} is more general and phrased differently. The formulation above avoids introducing extra terminology.
In \cite[\S6]{FLgeneralities} we observed that \eqref{eq:conjecture} is an equality when $X$ is a surface, or when $\inob{x}{L}$ is a simplex. We also showed that equality fails when $X$ is the Jacobian of a hyperelliptic curve of genus 3 with its natural Theta polarization.
A feature that sets apart the hyperelliptic Jacobian threefold from the other cases mentioned above is that ${\bf B}_+(\pi^*L-tE)$ can have components fully supported in $E$ on ${\rm Bl}_xX$. 
It follows from our work in Theorems \ref{thm:CP}, \ref{thm:CCCfinalform}, and \ref{thm:JxCfinalform} that for the box-product polarizations $L$ considered there we never find purely exceptional components in ${\bf B}_+(L_t)$, essentially because the curves that we blow-up are never fully contained in the strict transform of $E$.
Thus we might be led to believe that \eqref{eq:conjecture} is an equality for the curve classes $(L^2)$ we work with here. We show that this is only sometimes true.

\begin{theorem}\label{thm:Seshdaricurveconjecture}$ $
	\begin{enumerate}[(1)]
		\item With the notation of Theorem \ref{thm:CP},
$\epsilon((L^2);x)=\min\{2ab,b^2\}$.
		The projection of $\inob{x}{L}$ on the last two components is
		\begin{enumerate}[i.]
			\item The standard simplex of size $b$, if $a\geq b$ and \eqref{eq:conjecture} is an equality in this case.
			\item The right trapezoid with vertices at $(0,0)$, $(a,0)$, $(a,b-a)$, $(0,b)$, if $a<b$ and \eqref{eq:conjecture} is always strict in this case.
		\end{enumerate}
\smallskip		
	\item With the notation of Theorem \ref{thm:CCCfinalform}, 
$\epsilon((L^2);x)=2d_2d_3$.
	The projection of $\inob{x}{L}$ is the right trapezoid with vertices $(0,0)$, $(d_3,0)$, $(d_3,d_2-d_3)$, $(0,d_2+d_3)$ and \eqref{eq:conjecture} is always an equality for this class of examples.
\smallskip

\item With the notation of Theorem \ref{thm:JxCfinalform}, for all $s\in(0,1)$ we have
$\epsilon((L_s)^2;x)=\min\bigl\{2s^2,\frac 83s(1-s)\bigr\}$.
Furthermore \eqref{eq:conjecture} is an equality when $s\in(0,\frac 37]$, but it is strict for example when $s=\frac 12$.
	\end{enumerate}
\end{theorem}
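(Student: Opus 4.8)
The plan is to treat the three settings in parallel, each requiring the same three routine computations: the Seshadri constant $\epsilon((L^2);x)$, the planar shadow ${\rm pr}_1(\inob{x}{L})$, and the comparison of the former with $(n-1)!=2$ times the Euclidean area of the latter.

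\emph{The Seshadri constants.} I would use the dual description of the Seshadri constant of a movable curve class recalled above, $\epsilon(C;x)=\sup\{t\geq 0\ \mid\ \pi^*C-t\ell\in{\rm Mov}_1({\rm Bl}_xX)\}$, together with the \cite{BDPP} duality ${\rm Mov}_1({\rm Bl}_xX)=\Eff({\rm Bl}_xX)^\vee$. In each case the pseudoeffective cone of ${\rm Bl}_xX$ is simplicial, generated by $E$ together with the strict transforms of the ``coordinate'' divisors through $x$: namely $\overline f,\overline H$ by Proposition \ref{prop:blCxP2positive}(1), the $\overline f_i$ by an easy restriction argument as in Propositions \ref{prop:blCxP2positive} and \ref{prop:firstCxJ}, and $\overline J_c,\overline Y$ by Proposition \ref{prop:firstCxJ}(1). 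Thus $\epsilon((L^2);x)$ is the least $t$ at which one of the finitely many intersection numbers $(\pi^*(L^2)-t\ell)\cdot D$ vanishes. A box-product computation gives $(L^2)\cdot f=b^2$, $(L^2)\cdot H=2ab$ in case (1); $(L^2)\cdot f_i=2d_jd_k$ in case (2); $(L_s)^2\cdot f=2s^2$, $(L_s)^2\cdot q^*\theta=4s(1-s)$ in case (3). Pairing $\pi^*(L^2)-t\ell$ with the generators — using $\ell\cdot E=-1$, $\ell\cdot\pi^*(\text{divisor})=0$, $\pi^*(\text{curve class})\cdot E=0$, so that e.g. $(\pi^*(L^2)-t\ell)\cdot\overline H=(L^2)\cdot H-t$ and $(\pi^*((L_s)^2)-t\ell)\cdot\overline Y=4\bigl((L_s)^2\cdot q^*\theta\bigr)-6t$ — produces the thresholds $b^2,2ab$ (case (1)), $2d_jd_k$ (case (2)), $2s^2,\tfrac83 s(1-s)$ (case (3)), whose minima are the announced values (with $d_2d_3\leq d_1d_3\leq d_1d_2$ in case (2)).

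\emph{The shadows.} Starting from the explicit polytopes of Theorems \ref{thm:CP}(1), \ref{thm:CCCfinalform}(5) and \ref{thm:JxCfinalform}(3), I would project off the first coordinate $\nu_1$: for a vertex presentation this is the convex hull of the projected vertices, for an inequality presentation it is Fourier--Motzkin elimination of $\nu_1$, and the bookkeeping can be delegated to \cite{Polymake} as elsewhere. One gets, in case (1), the size-$b$ standard simplex when $a\geq b$ and the right trapezoid with vertices $(0,0),(a,0),(a,b-a),(0,b)$ when $a<b$; in case (2), the right trapezoid $(0,0),(d_3,0),(d_3,d_2-d_3),(0,d_2+d_3)$; and in case (3), after eliminating $\nu_1$ (equivalently $e=1+\tfrac12 s-\nu_1$) from Theorem \ref{thm:JxCfinalform}(3), the region
\[\bigl\{(x,z)\ \mid\ 0\leq x\leq 1-s,\ 0\leq z\leq\min\{\tfrac32 s-\tfrac98 x,\ 1+\tfrac s3-2x,\ \tfrac1{80}(72+36s-153x)\}\bigr\}.\]
Its Euclidean area is $\tfrac12 b^2$, $\tfrac12 a(2b-a)$, $d_2d_3$ in the first three. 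In case (3) the three competing bounds on $z$ all meet at $x=1-s$, $z=0$ precisely when $\tfrac{4s}3=1-s$, i.e. $s=\tfrac37$; for $0<s\leq\tfrac37$ the first bound $\tfrac32 s-\tfrac98 x$ is smallest throughout $0\leq x\leq\tfrac{4s}3$, so the region is the triangle $(0,0),(\tfrac{4s}3,0),(0,\tfrac32 s)$ of area exactly $s^2$, whereas for $s>\tfrac37$ the other two bounds strictly truncate that triangle near $x=1-s$ and strictly lower the area — for $s=\tfrac12$ one obtains the pentagon $(0,0),(\tfrac12,0),(\tfrac12,\tfrac16),(\tfrac{10}{21},\tfrac3{14}),(0,\tfrac34)$ of area $\tfrac{59}{252}$.

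\emph{Comparison, and the hard part.} Since $(n-1)!=2$: in case (1) with $a\geq b$, $\epsilon=b^2=2\cdot\tfrac12 b^2$, and in case (2), $\epsilon=2d_2d_3=2\cdot d_2d_3$, so \eqref{eq:conjecture} is an equality; in case (1) with $a<b$, $\min\{2ab,b^2\}>2ab-a^2$ — in the subrange $2a<b$ because the left side is $2ab$, and in the subrange $\tfrac b2\leq a<b$ because $b^2=(2ab-a^2)+(b-a)^2$ with $a\neq b$ — so the bound is strict; in case (3), $\epsilon=2s^2$ for $s\leq\tfrac47$, hence $\epsilon=2\cdot s^2$ exactly for $s\in(0,\tfrac37]$, while for $s$ just above $\tfrac37$ the shadow has area less than $s^2$ and the bound is strict, e.g. $\epsilon=\tfrac12>\tfrac{59}{126}=2\cdot\tfrac{59}{252}$ at $s=\tfrac12$. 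The one genuinely delicate point is the behavior of the case-(3) shadow: one must verify that the three upper envelopes above keep ${\rm pr}_1(\inob{x}{L_s})$ a triangle of area exactly $s^2$ up to the threshold $s=\tfrac37$ dictated by $\tfrac{4s}3=1-s$, and that the truncation is strict for $s>\tfrac37$; the remaining comparisons are elementary arithmetic.
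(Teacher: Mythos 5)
Your computations of the shadows and the final comparisons are correct and match the paper, but there is a genuine gap in the Seshadri constant lower bounds for cases (2) and (3). You claim that in each case $\Eff(\overline{X})$ is simplicial, generated by $E$ and the strict transforms of the ``coordinate'' divisors through $x$, and then conclude the Seshadri constant from \cite{BDPP} duality by intersecting $\pi^*(L^2)-t\ell$ with these generators. This is fine in case (1), where Proposition~\ref{prop:blCxP2positive}(1) literally computes the full pseudoeffective cone (and $N^1(\overline{X})$ has rank $3$). It fails in cases (2) and (3): $C_1\times C_2\times C_3$ and $C\times J_2$ can have Picard rank much larger than $3$ (isogenies between elliptic factors, correspondences between curves, $\operatorname{Hom}(J_C,J_2)\neq 0$, etc.), so $\overline{f}_1,\overline{f}_2,\overline{f}_3,E$ (resp.\ $\overline{J}_c,\overline{Y},E$) span only a low-dimensional subspace of $N^1(\overline{X})$ and cannot generate the effective cone on dimension grounds. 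Proposition~\ref{prop:firstCxJ}(1) is only a statement about the slice $\operatorname{Span}(\overline{J}_c,\overline{Y},E)\cap\Eff(\overline{X})$, not about $\Eff(\overline{X})$ itself. Intersecting with these effective divisors only gives the \emph{upper} bound $\epsilon((L^2);x)\leq 2d_2d_3$ (resp.\ $\leq\min\{2s^2,\frac83 s(1-s)\}$); the matching lower bound, i.e.\ that $\pi^*(L^2)-t\ell$ is actually in $\operatorname{Mov}_1(\overline{X})$ up to that threshold, does not follow.

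The paper flags exactly this pitfall and closes the gap differently: one shows directly that the extremal curve classes in the relevant span are movable, using Lemma~\ref{lem:createmovable} (a product of nef classes with one movable class is a movable curve class, by \cite{BDPP} restricted to prime divisors). Concretely, in case (2) one uses homogeneity and superadditivity of $\epsilon(\cdot;x)$ to reduce to $\epsilon((f_1+f_2+f_3)^2;x)\geq 2$, which follows from $\epsilon(L';x)\cdot\nu(L';x)=1\cdot 2$ via Lemma~\ref{lem:createmovable}; in case (3) one writes the extremal ray $2\theta^2+3\theta f-4\ell$ as (a multiple of) the product $(4f+3\theta-4E)\cdot(2f+3\overline{Y})$ of a nef and a movable divisor class from Proposition~\ref{prop:firstCxJ}. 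Your write-up needs one of these movability arguments to establish the lower bounds; as written, the ``easy restriction argument'' does not give a simplicial effective cone and the Seshadri constants in (2) and (3) are not proved.
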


For the Seshadri constant computation we use the following method for creating movable curve classes:

\begin{lemma}\label{lem:createmovable}
	Let $X$ be a smooth projective variety of dimension $n$. Let $\eta_1,\ldots,\eta_{n-2}$ be nef divisor classes, and let $\mu$ be a movable divisor class. Then $C\deq\eta_1\cdots\eta_{n-2}\cdot\mu$ is a movable curve class. In particular, if $n=3$, and $L$ is nef on $X$, then $\epsilon((L^2);x)\geq\epsilon(L;x)\cdot\nu(L;x)$.
\begin{proof} 	
	This is a consequence of \cite{BDPP}. If $Z$ is a prime divisor in $X$, then $\mu|_Z$ is a pseudoeffective divisor class since if $D$ moves in a linear series without fixed components, it is clearly linearly equivalent to a divisor $D'\geq 0$ that meets $Z$ properly. Furthermore ${\eta_i}|_Z$ is nef. It follows that $(C\cdot Z)={\eta_1}|_Z\cdots{\eta_{n-2}}|_Z\cdot\mu|_Z\geq 0$. For the last part, observe that $\pi^*(L^2)-\epsilon(L;x)\cdot\nu(L;x)\cdot \ell=(\pi^*L-\epsilon(L;x)E)(\pi^*L-\nu(L;x)E)$.
\end{proof} 
\end{lemma}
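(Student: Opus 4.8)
The plan is to establish movability of the curve class $C=\eta_1\cdots\eta_{n-2}\cdot\mu$ through the Boucksom--Demailly--P\u{a}un--Peternell duality \cite{BDPP}: the closed cone of movable curves on $X$ is exactly the dual of the pseudoeffective cone of divisors. Hence it is enough to show $(C\cdot D)\geq 0$ for every prime divisor $D\subset X$; nonnegativity against an arbitrary pseudoeffective divisor class then follows by linearity over prime divisors and by passing to the closure of the effective cone.

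So fix a prime divisor $D$ and pick a smooth projective variety $\nu\colon\widetilde D\to X$ mapping generically finitely, of some degree $\delta\geq 1$, onto $D$ (a resolution of $D$ in characteristic zero, or a de Jong alteration in general). By the projection formula $(C\cdot D)=\tfrac1\delta\bigl(\nu^*\eta_1\cdots\nu^*\eta_{n-2}\cdot\nu^*\mu\bigr)$, the right-hand side being an intersection number on the $(n-1)$-dimensional $\widetilde D$. Here each $\nu^*\eta_i$ is nef since nefness is preserved by pullback, and $\nu^*\mu$ is pseudoeffective by the necessary condition for movability in \S\ref{subsec:movcond}, applied to the smooth alteration $\nu$ of the prime divisor $D$. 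It remains to invoke the standard positivity fact that on a smooth projective variety the product of finitely many nef classes with a single pseudoeffective class is $\geq 0$: approximate the nef classes by ample ones and the pseudoeffective class by an effective $\bb R$-divisor $\sum_j a_jZ_j$; a general complete-intersection curve cut by high multiples of the ample classes meets each $Z_j$ properly, so the intersection number is a nonnegative count of points, and the general case follows by continuity. (Alternatively, a complete intersection of nef classes is a movable curve, which pairs nonnegatively with pseudoeffective divisors, again by \cite{BDPP}.) This gives $(C\cdot D)\geq 0$ and hence the first assertion.

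For the ``in particular'' statement, let $\pi\colon\widetilde X={\rm Bl}_xX\to X$ with exceptional divisor $E\simeq\bb P^2$, and write $\epsilon=\epsilon(L;x)$, $\nu=\nu(L;x)$. By the definitions of these invariants and the closedness of the nef and movable cones, $\pi^*L-\epsilon E$ is nef and $\pi^*L-\nu E$ is movable; applying the first part with $n=3$, one nef class and the movable class, the product $(\pi^*L-\epsilon E)\cdot(\pi^*L-\nu E)$ is a movable curve class on $\widetilde X$. Expanding it and using that $(\pi^*L\cdot E)=0$ in $N_1(\widetilde X)$ because $E$ is $\pi$-exceptional, together with $E^2=-\ell$ for $\ell$ a line in $E$ (as $\sO_E(E)=\sO_{\bb P^2}(-1)$), one gets $(\pi^*L-\epsilon E)\cdot(\pi^*L-\nu E)=\pi^*(L^2)-\epsilon\nu\,\ell$. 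Since this class is movable, the definition of the Seshadri constant of a movable curve class yields $\epsilon((L^2);x)\geq\epsilon\nu=\epsilon(L;x)\cdot\nu(L;x)$.

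The only input beyond elementary intersection theory is the BDPP duality, which is cited; granting it, the proof has no serious obstacle. The points to be careful about are the reduction from a general prime divisor $D$ to the smooth case via an alteration and the attendant normalization in the projection formula, together with the standard but slightly fussy verification that $\mathrm{nef}^{\,n-2}\cdot(\mathrm{pseudoeffective})\geq 0$ on a smooth projective variety; both are routine.
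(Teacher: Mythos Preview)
Your proof is correct and follows essentially the same approach as the paper: invoke BDPP duality, test against prime divisors by restricting (or pulling back via an alteration) so that $\mu$ becomes pseudoeffective and the $\eta_i$ remain nef, then use ${\rm nef}^{\,n-2}\cdot{\rm pseff}\geq 0$; the ``in particular'' is handled identically by expanding the product $(\pi^*L-\epsilon E)(\pi^*L-\nu E)$. Your version is slightly more explicit about the alteration and the justification of the final positivity, but there is no substantive difference.
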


\begin{proof}[Proof of {Theorem \ref{thm:Seshdaricurveconjecture}}]
	(1). For the Seshadri constant computation, we want to find the largest $t$ such that $\pi^*(L^2)-t\ell$ is movable on $\overline X$. By \cite{BDPP}, movability for curve classes is tested by intersecting with the generators of the effective cone of divisors. By intersecting $\pi^*(L^2)-t\ell=\pi^*(2ab\cdot\bb P^1_x+b^2\cdot C_x)-t\ell$ with $\overline f$, $\overline H$, and $E$ from Proposition \ref{prop:blCxP2positive}, the conclusion follows. 
	The convex body projections are immediate from Theorem \ref{thm:CP}. The other statements are an easy direct verification.
	\smallskip
	
	(2) If $\pi^*(L^2)-t\ell$ is movable, it has nonnegative intersection with the effective divisors $\overline f_1,\overline f_2,\overline f_3$ and $E$. Since $(L^2)=2d_1d_2f_1f_2+2d_1d_3f_1f_3+2d_2d_3f_2f_3$, we obtain $0\leq t\leq\min\{2d_1d_2,\ 2d_1d_3,\ 2d_2d_3\}=2d_2d_3$. Thus $\epsilon((L^2);x)\leq 2d_2d_3$.
	Since $X=C_1\times C_2\times C_3$ may have Picard rank bigger than $3$, we cannot immediately deduce sufficiency of the conditions above from the duality result of \cite{BDPP} alone.
	
	The classes $f_1f_2$, $f_1f_3$, $f_2f_3$ are movable since they are intersections of nef classes. It is easy to see that as a function on the movable cone of curves, the Seshadri constant of curve classes $\epsilon(\cdot;x)$ is homogeneous and superadditive. For the desired reverse inequality, since $(L^2)-d_2d_3(f_1+f_2+f_3)^2$ is movable, it is sufficient to prove that $\epsilon((f_1+f_2+f_3)^2;x)\geq 2$. For this we may use Lemma \ref{lem:createmovable} for $L'=f_1+f_2+f_3$ given that
	$\epsilon(L';x)=1$ and $\nu(L';x)=2$ from Theorem \ref{thm:CCCfinalform}.
	
	The computation of the convex body projection and its area are straightforward, using Theorem \ref{thm:CCCfinalform}.
\smallskip

(3). 	By abuse we continue to denote by $\theta$ and $f$ the obvious pullbacks to $\overline X$. 
We prove that a curve class $C\in{\rm Span}(\theta^2,\theta f,\ell)\subset N_1(\overline X)_{\bb R}$ is movable if and only if $C$ has nonnegative intersection with the effective divisors $E$, $\overline J_c\equiv f-E$, and $\overline Y\equiv 4\theta-6E$ that generate $\Eff(\overline X)$ by Proposition \ref{prop:firstCxJ}. As in $(2)$, this is not a formal consequence of \cite{BDPP} when $\overline X$ has Picard rank bigger than 3. 

The conditions are clearly necessary by \cite{BDPP}. 
For sufficiency, one computes that the cone in ${\rm Span}(\theta^2,\theta f,\ell)$ determined by the nonnegativity of the 3 intersections is generated by the classes $2\theta^2+3\theta f-4\ell$, $\theta f$, $\theta^2$. It is enough to prove that these classes are all movable. The last two are intersections of nef divisor classes, while the first is proportional to the product $(4f+3\theta-4E)(2f+3\overline Y)=(3\theta+4f-4E)(12\theta+2f-18E)$ between a nef divisor class and a movable divisor class by Proposition \ref{prop:firstCxJ}. Conclude from Lemma \ref{lem:createmovable}.

From the above, the class $\pi^*(L_s^2)-t\ell=s^2\theta^2+2s(1-s)\theta f-t\ell$ is movable if and only if it has nonnegative intersection against the effective divisors $E$, $\overline J_c\equiv f-E$, and $\overline Y\equiv 4\theta-6E$. These are equivalent to $0\leq t\leq\min\{2s^2,\frac 83s(1-s)\}$. 	

From the example $s=\frac 12$ in Theorem \ref{thm:JxCfinalform}.(4), the projection on the coordinate plane $(x,z)$ has vertices $(0,0)$, $(\frac 12,0)$, $(\frac 12,\frac 16)$, $(\frac{10}{21},\frac 3{14})$, $(0,\frac 34)$. Twice its area is $\frac{59}{126}<\epsilon((L_s)^2;s)=\frac 12$. The last equality is by Proposition \ref{prop:firstCxJ}.$(4)$.

We prove that \eqref{eq:conjecture} is an equality when $s\in(0,\frac 37]$.
Projecting the 4D polytope in coordinates $(s,t,x,z)$ described in Theorem \ref{thm:JxCfinalform} on the 3D space where we forget the $t$ coordinate, we obtain the polytope with the five vertices $(0,0,0)$, $(1,0,0)$, $(1,0,\frac 43)$, $(\frac 37,\frac 47,0)$, $(\frac 67,0,\frac 97)$. In this 3D polytope, the slice areas where $s$ is constant are precisely the projections of interest for \eqref{eq:conjecture}.
With \cite{Polymake} we compute that the region $s\leq\frac 37$ of this polytope is cut out by the inequalities $x,z\geq 0$, $\frac 34x+\frac 23z\leq s\leq\frac 37$. Fixing $s$, we obtain that our projections of interest for $s\in(0,\frac 37]$ are triangles with vertices $(0,0)$, $(\frac 43s,0)$, $(0,\frac 32s)$. Twice their areas are $2s^2$. For $s\leq\frac 47$, we also had $\epsilon((L_s^2);x)=2s^2$. The conclusion follows.
\end{proof} 

\begin{remark}
Theorem \ref{thm:Seshdaricurveconjecture} does not use \eqref{eq:conjecture}, in particular it does not use the very general assumption on $x$. We do not know if this hypothesis can be removed from the relevant result \cite[Theorem 6.2.(3)]{FLgeneralities}.	
	

\end{remark}

\end{document}